\documentclass[a4paper]{article}

\usepackage[english]{babel}

\usepackage[utf8]{inputenc}
\setlength{\parindent}{2em}
\setlength{\parskip}{0.5em}

\usepackage[utf8]{inputenc}
\usepackage{amsmath}
\usepackage{graphicx}
\usepackage{amssymb}
\usepackage{amsthm}
\usepackage{tikz-cd}
\usepackage{mathrsfs}
\usepackage[colorinlistoftodos]{todonotes}
\usepackage{enumitem}
\usepackage{yfonts}
\usepackage{ dsfont }
\usepackage{MnSymbol}
\usepackage{slashed}

\title{Metric SYZ conjecture for certain toric Fano hypersurfaces}

\author{Yang Li}

\date{\today}
\newtheorem{thm}{Theorem}[section]
\newtheorem{lem}[thm]{Lemma}

\theoremstyle{definition}
\newtheorem{eg}[thm]{Example}

\newtheorem{conj}[thm]{Conjecture}
\newtheorem{cor}[thm]{Corollary}
\newtheorem{claim}[thm]{Claim}
\newtheorem{rmk}[thm]{Remark}
\newtheorem{prop}[thm]{Proposition}
\newtheorem{Def}[thm]{Definition}

\newtheorem*{Acknowledgement}{Acknowledgement}

\newcommand{\cf}{\emph{cf.} }

\newcommand{\R}{\mathbb{R}}
\newcommand{\C}{\mathbb{C}}
\newcommand{\Z}{\mathbb{Z}}

\newcommand{\Q}{\mathbb{Q}}

\newcommand{\norm}[1]{\left\lVert#1\right\rVert}

\def\XXint#1#2#3{{\setbox0=\hbox{$#1{#2#3}{\int}$ }
		\vcenter{\hbox{$#2#3$ }}\kern-.6\wd0}}

\begin{document}
	\maketitle

\begin{abstract}
We prove the metric version of the SYZ conjecture for a class of Calabi-Yau hypersurfaces inside toric Fano manifolds, by solving a variational problem whose minimizer may be interpreted as a global solution of the real Monge-Amp\`ere equation on certain polytopes. This does not rely on discrete symmetry.
\end{abstract}

\section{Introduction}

The metric aspect of the Strominger-Yau-Zaslow  (SYZ) conjecture \cite{SYZ} asks for the following:

\begin{conj}\label{SYZconj}(\cf \cite{LiNA}\cite{Lisurvey}\cite{SYZ})
Let $X_t$ be a 1-parameter maximally degenerate family of polarized $d$-dimensional  Calabi-Yau  manifolds over the punctured disc $\mathbb{D}_t^*$, then for any given $0<\delta\ll 1$, and all $t$ small enough depending on $\delta$, there exists a special Lagrangian $T^d$-fibration on an open subset of $X_t$ for $0<|t|\ll 1$, whose normalized Calabi-Yau measure is at least $1-\delta$.
\end{conj}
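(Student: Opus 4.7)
The plan is to specialize to Calabi-Yau hypersurfaces $X_t$ of the form $\{s_0 + t s_\infty = 0\}$ inside a smooth toric Fano manifold $Y$ with moment polytope $\Delta$, where $s_0$ cuts out the toric boundary and $s_\infty$ is a generic anti-canonical section whose Newton subdivision determines a tropical hypersurface $\Gamma \subset \Delta$. The starting point is that the restriction of the moment map of $Y$ yields a map $\mu_t : X_t \to \Delta$ whose generic fibers are Lagrangian tori and which, away from a neighborhood of $\Gamma$, should serve as a good approximation of the desired SYZ fibration.

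To upgrade this to a special Lagrangian fibration for the Calabi-Yau metric, one needs the base to carry a Monge-Amp\`ere metric. I would formulate the corresponding real MA equation on $\Delta \setminus \Gamma$ as a variational problem: minimize a strictly convex functional of the form $u \mapsto \int_\Delta F(u)\, dx - \int_\Delta u\, d\nu$ over convex functions on $\Delta$, subject to asymptotic/Dirichlet conditions dictated by the tropicalization of $s_0$ and $s_\infty$. Convexity and coercivity of the functional should yield existence and uniqueness of a minimizer $u$, and interior regularity away from $\Gamma$ would follow from Caffarelli-type estimates once the right-hand-side measure is seen to be non-degenerate on compact subsets of $\Delta \setminus \Gamma$.

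Using $u$, the next step is to construct an approximate special Lagrangian fibration on $\mu_t^{-1}(\Delta_\delta)$, where $\Delta_\delta := \{x \in \Delta : \mathrm{dist}(x, \Gamma) > \delta\}$, by pairing the semi-flat model built from $u$ with the torus directions of $\mu_t$, and then perturbing it to a genuine special Lagrangian fibration via an implicit-function argument against the actual Calabi-Yau metric. The $(1-\delta)$ measure bound then follows from comparing the push-forward $(\mu_t)_\ast \mathrm{vol}_{X_t}$ with the Monge-Amp\`ere measure of $u$, together with the observation that both concentrate on $\Delta_\delta$ up to $O(\delta)$ error as $t \to 0$.

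The hard part, and the whole point of working without symmetry, is the regularity of the MA minimizer $u$: one needs to show that its singular set is contained in $\Gamma$ and its lower-dimensional strata rather than spilling into the interior of the top-dimensional cells, which is out of reach of the ODE reduction used in symmetric cases and requires genuine multidimensional convex PDE arguments. A secondary obstacle is quantifying the semi-flat-to-Calabi-Yau perturbation uniformly in both $t$ and $\delta$, which is what makes the final measure estimate effective rather than merely asymptotic.
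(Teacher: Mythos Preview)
The statement you are attempting is a \emph{conjecture}; the paper does not prove it in general, only the special case of the toric hypersurface families (\ref{torichypersurface}) under the polytope condition (\ref{extraassumption}) (Theorem~\ref{SYZthm}). So there is no ``paper's own proof'' of the full conjecture to compare against, and your proposal should be read as a strategy for that special case.

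Even for that special case, your outline diverges from the paper in ways that matter. First, the base of the real Monge--Amp\`ere problem is not $\Delta\setminus\Gamma$ but the \emph{boundary} $\partial\Delta^\vee$ of the dual polytope, identified with the essential skeleton $Sk(X)$ via tropicalization; the dual side is $\partial\Delta_\mu$. The functional is not of the shape $\int F(u)\,dx-\int u\,d\nu$ over convex functions on a domain, but the Legendre-symmetric $\mathcal{F}(\phi)=\int_{\partial\Delta_\lambda^\vee}\phi\,d\mathcal{L}^\vee+\int_{\partial\Delta_\mu}\phi^*\,d\mathcal{L}$ of (\ref{functional}), where $\phi^*$ is the $c$-transform (\ref{Legendre}). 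The whole difficulty is that $\partial\Delta^\vee$ has no global affine structure, so ``convex function on $\Delta$'' and Caffarelli regularity are not available off the shelf; the paper instead proves a variational inequality (Prop.~\ref{variationalinequality}) and a structure theorem controlling anomalous conjugate points (Thm.~\ref{goodbaddecomposition}, Thm.~\ref{realMAthm}), which is exactly where the hypothesis $\langle m,n\rangle\neq 0$ enters.

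Second, the passage from the real MA solution to the SYZ fibration is not done by a hands-on semi-flat perturbation in this paper. Instead the minimizer $\phi$ is promoted to a semipositive non-archimedean metric $\Phi$ on $L\to X^{an}$, one checks it satisfies the ``weak comparison property'', computes its NA MA measure via Vilsmeier's comparison (Thm.~\ref{Vilsmeier}) to see it equals $(L^d)\mu_0$, and then invokes uniqueness of the NA Calabi--Yau metric plus the black-box Theorem~\ref{NAMASYZ} from \cite{LiNA}, which already packages the perturbation/gluing argument you sketch. Your implicit-function step and measure comparison are therefore not wrong in spirit, but they are outsourced to \cite{LiNA}\cite{LiFermat}, and what is genuinely new here is the convex-analytic control of the minimizer on polytope boundaries, not the K\"ahler-geometric endgame.
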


The case of Abelian varieties is classical, while the case of K3 surfaces is known through the trick of hyperk\"ahler rotation \cite{GrossWilson}\cite{Odaka}. The first nontrivial example in all dimensions is the Fermat family of Calabi-Yau hypersurfaces \cite{LiFermat}
\begin{equation*}
X_s= \{ Z_0\ldots Z_{d+1}+ e^{-s} \sum_0^{d+1} Z_i^{d+2}=0   \}\subset \mathbb{CP}^{d+1}, \quad s\gg 1.
\end{equation*}
The proof of the SYZ conjecture in this case exploits the large discrete symmetry group, to simplify some rather delicate combinatorial problems.

A turn of thinking came about in \cite{LiNA}, where we reduced the SYZ conjecture to a problem of algebraic geometry. In brief, there is a non-archimedean  (NA) analogue of the complex Monge-Amp\`ere (MA) equation, defined in terms of intersection theory, and the deep work of Boucksom et al. \cite{Boucksom}\cite{Boucksom1}\cite{Boucksomsemipositive}\cite{Boucksomsurvey} proved the existence and uniqueness of the solution. To deduce the SYZ conjecture, what is left to prove is a `comparison property' \cite{LiNA}, which morally asserts the NA MA solution is not too wild.

For this new strategy to be convincing, it is requisite to verify the comparison property in  examples. The classical case of Abelian varieties was checked by Goto-Odaka \cite{GotoOdaka}. The recent work of Pille-Schneider \cite{PilleSchneider} and Hultgren et al. \cite{Hultgren} independently verified the comparison property for some generalisation of the Fermat family, but both works still rely heavily on the large discrete symmetry group, and thus still only apply to certain hypersurfaces inside $\mathbb{CP}^{d+1}$.

A notable contribution in \cite{Hultgren} is to introduce a global variational problem. The existence and uniqueness of the minimizer is quite transparent, but the local PDE nature of the minimizer is not manifest.
It is essentially known in \cite{Hultgren} that if one can deduce a local real Monge-Amp\`ere equation from the minimizer, then it would naturally produce the unique solution of the NA MA solution, and the `comparison property' would be a corollary. The present paper aims to make progress on this strategy, by proving the expected properties of the minimizer for more examples without explicit reliance on the discrete symmetry.

Let $\Delta\subset M_\R$ be an integral reflexive Delzant polytope, and $X_\Delta$ be the associated smooth toric Fano manifold of dimension $d+1$, with an ample polarization $L\to X_\Delta$, which needs not be the anticanonical polarization.
The origin $0\in \Delta$ corresponds to a distinguished section $X_{can}\in H^0(X_\Delta, -K_{X_\Delta})$, which defines the toric boundary of $X_\Delta$. Let $F\in H^0(X_\Delta, -K_{X_{\Delta}})$ be a generic section, and by assumption the divisor $\{ F=0 \}\subset X_\Delta$ is smooth, and intersects all the toric boundary strata transversely, and in particular does not pass through the finite number of toric fixed points on $X_\Delta$. We will consider the family of Calabi-Yau hypersurfaces as $t\to 0$:
\begin{equation}\label{torichypersurface}
X_t= \{ X_{can}+tF=0  \}\subset X_\Delta\times \C^*_t.
\end{equation}
Algebro-geometrically $X_t$ degenerate to the toric boundary.

We will assume one extra property on the reflexive polytope $\Delta$, whose dual polytope is denoted as $\Delta^\vee$:
\begin{equation}\label{extraassumption}
\langle m, n\rangle \neq 0,\quad \forall m\in \text{vertex}(\Delta),\quad \forall n\in \text{vertex}(\Delta^\vee).
\end{equation}
The main outcome is 
\begin{thm}\label{SYZthm}
	The SYZ conjecture \ref{SYZconj} holds for the family $X_t$ as $t\to 0$.
\end{thm}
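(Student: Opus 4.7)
The plan is to follow the strategy of \cite{LiNA}: reduce the metric SYZ conjecture to verifying the non-archimedean comparison property, and then construct the NA Monge-Amp\`ere solution explicitly enough, via a convex minimizer on a polytope associated to $\Delta$, that this property becomes visible. The variational programme was initiated in \cite{Hultgren}; the novelty sought here is to make it go through without relying on the discrete symmetry.

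First I would set up the global variational problem in the spirit of \cite{Hultgren}, on the natural polytopal base $B$ (morally $\partial\Delta^\vee$) of the conjectural SYZ fibration. The unknown is a convex function $u$ on $B$ glued from affine pieces indexed by the vertices of $\Delta$, which correspond to the monomial tropicalizations of $X_{can}+tF$. The functional to minimize is a real Monge-Amp\`ere type energy calibrated so that its Euler--Lagrange equation is a local real MA equation with the correct right-hand side coming from the Calabi-Yau measure. Convexity gives existence and uniqueness of the minimizer $u_*$ transparently; the real analytic work is to then show that $u_*$ satisfies a genuine local real MA equation $\mathrm{MA}(u_*)=\mu$ on an open full-measure subset of $B$. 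Here the reflexivity assumption (\ref{extraassumption}) enters decisively: $\langle m,n\rangle\neq 0$ for all vertex pairs forces the affine pieces attached to different vertices of $\Delta$ to remain in general position against the facets of $\Delta^\vee$, ensuring that the singular locus of $u_*$ is a piecewise affine set of codimension one of the expected combinatorial type. This non-degeneracy is the replacement for the symmetry input of \cite{LiFermat,PilleSchneider,Hultgren}.

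Once the local MA structure is in place, $u_*$ tropicalizes to an NA plurisubharmonic metric on $L$ solving the NA MA equation on $X_t^{an}$; by the uniqueness theorems of \cite{Boucksom,Boucksom1,Boucksomsemipositive} it must coincide with the abstract NA MA solution whose existence is granted. The explicit piecewise description of $u_*$ then allows one to verify the comparison property of \cite{LiNA}, essentially by matching the leading-order behaviour of $\log|t|$-scaled K\"ahler potentials on $X_t$ with the real MA potential $u_*$ on the smooth locus of $B$. The main theorem of \cite{LiNA} then upgrades this to the metric SYZ statement of Theorem \ref{SYZthm}, with the measure defect $\delta$ controlled by the Calabi-Yau mass sitting over the complement of the smooth locus in $B$.

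The main obstacle I anticipate is the second step: extracting a local real Monge-Amp\`ere equation from the Euler--Lagrange conditions of the variational problem without symmetry reductions, and simultaneously identifying and controlling the piecewise affine singular strata of $u_*$ where the PDE degenerates. The assumption (\ref{extraassumption}) is engineered to make the combinatorics non-degenerate enough for a direct tropical and convex-analytic attack to succeed, but turning this structural input into quantitative regularity of $u_*$, and then into the $1-\delta$ measure control required by Conjecture \ref{SYZconj}, is where most of the work is expected to lie.
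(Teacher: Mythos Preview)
Your high-level strategy matches the paper's: solve the variational problem on $\partial\Delta^\vee$, extract a real Monge--Amp\`ere equation from the minimizer, upgrade to a semipositive NA metric on $L\to X^{an}$ whose NA MA measure is the Lebesgue measure on the skeleton, invoke uniqueness to identify it with the Boucksom--Favre--Jonsson solution, and then apply \cite{LiNA}. So the architecture is correct.

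Where your proposal diverges from the paper is in the mechanism by which assumption (\ref{extraassumption}) enters, and in what you expect the hard step to be. You anticipate needing to control the singular strata of the minimizer and obtain quantitative regularity in order to get the $1-\delta$ measure bound. The paper does \emph{none} of this: no regularity of $\phi$ beyond Lipschitz is ever established, and the structure of the singular set remains an open question (see the paper's list of open problems). Instead, the paper works purely in the weak Alexandrov sense on the open top-dimensional faces. The key device is a two-sided variational inequality: a perturbation argument gives $|\bar\nabla\phi(E)|\geq |E|$ for the ``conjugate set'' $\bar\nabla\phi$, while a careful analysis of \emph{anomalous conjugate points} (points $p\in\bar\nabla\phi(x)$ that fail to lie in the correct half $H_m^+$) yields the reverse bound $|\nabla\phi(E)|\leq |E|$. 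The assumption $\langle m,n\rangle\neq 0$ enters precisely here: it rules out the ``bad set'' $\mathcal{B}$ of points whose anomalous conjugates lie in faces $\Delta_n$ with $\langle m,n\rangle=0$, and once $|\mathcal{B}|=0$ the two inequalities combine to give $|\nabla\phi(E)|=|E|$, which \emph{is} the weak real MA equation on each open face. This is a measure-theoretic and combinatorial argument, not a regularity argument; your description of (\ref{extraassumption}) as a ``general position'' condition on affine pieces does not capture it.

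Similarly, the comparison property is not verified by matching $\log|t|$-scaled potentials on $X_t$ as you suggest. The minimizer $\phi$ extends canonically to a convex function on $N_\R$, which by \cite{PilleSchneider} corresponds to a toric semipositive metric $\Phi$ on $X_\Delta^{an}$, hence on $X^{an}$; the weak comparison property (constancy on retraction fibres) is then immediate from the toric construction. Vilsmeier's theorem converts the real MA equation on open faces into the NA MA measure, and a total-mass count (via asymptotic Riemann--Roch and lattice point counting on $\partial\Delta_\mu$) shows this accounts for all of $(L^d)$, so no mass escapes to lower-dimensional faces. This is what delivers the full-measure statement, not any control on the singular locus of $\phi$.
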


The main new content of this paper is contained in section \ref{VariationalProblem}, which establishes a structure theory for the minimizer of the variational problem, and has the flavour of convex analysis, variational calculus, and real MA equation. This part does not require prior knowledge of K\"ahler geometry, and may be of independent interest to analysts. In section \ref{applicationtoSYZ} we briefly sketch the complex geometric aspect towards the application to the SYZ conjecture, relying on the aforementioned results in \cite{LiFermat}\cite{LiuniformSkoda}\cite{LiNA}\cite{Hultgren}.

\begin{rmk}
J. Hultgren informs the author that together with R. Andreasson they have independently found an equivalent condition in terms of optimal transport theory, for the minimizer of the variational problem to admit the expected PDE interpretation \cite{Hultgrennew}. Their upcoming paper will also contain counterexamples, demonstrating in particular that some nontrivial condition on the reflexive polytope is necessary in this strategy.
\end{rmk}

\begin{Acknowledgement}
The author is a current Clay Research Fellow based at MIT. He would like to especially thank J. Hultgren for pointing out his upcoming work, and for helpful comments on this manuscript. He also thanks S. Boucksom, L. Pille-Schneider, and S. Sun for discussions in the past.
\end{Acknowledgement}

\section{The variational problem}\label{VariationalProblem}

\subsection{Motivation: A global real Monge-Amp\`ere equation?}

Let $M_\R\simeq \Z^{d+1}\otimes_\Z \R$ and $N_\R=M_\R^\vee$ be a pair of dual vector spaces, and 
let $\Delta\subset  M_\R$ be an integral reflexive polytope, with dual polytope $\Delta^\vee\subset N_\R$, so that
\[
\Delta^\vee= \{ x\in N_\R| \langle m, x\rangle \leq 1, \forall m\in \text{vertex}(\Delta)      \},
\] 
\[
\Delta= \{ p\in M_\R| \langle n, p \rangle \leq 1, \forall n\in \text{vertex}(\Delta^\vee)      \},
\]
both $\Delta, \Delta^\vee$ contain the origin, and all the vertices are integral points. Given negative real numbers $\mu(n)$ for all $n\in  \text{vertex}(\Delta^\vee)$, and $\lambda(m)$ for all $m\in \text{vertex}(\Delta)$, we can define the convex functions on $N_\R$ and $M_\R$,
\[
L_\lambda(x)=  \max_m \langle x, m\rangle+  \lambda(m) ,\quad L_\mu(p)= \max_n \langle p, n\rangle+  \mu(n) .
\]
These in turn define the convex polytopes containing the origin in the interior,
\[
\Delta_\lambda^\vee= \{ L_\lambda(x)\leq 0   \}\subset N_\R,\quad  \Delta_\mu= \{ L_\mu(p)\leq 0   \}\subset M_\R.
\]
For instance, the choice $\lambda=-1$ recovers $\Delta^\vee$, and the choice $\mu=-1$ recovers $\Delta$.

The normal vectors to the top dimensional faces are labelled by the vertices $m,n$, which are primitive integral vectors. This integral structure on the faces induce a canonical Lebesgue measure $dx$ and $dp$ on $\partial \Delta_\lambda^\vee$ and $\partial \Delta_\mu$ respectively, which we normalize to probability measures 
\begin{equation}\label{normalizedmeasure}
d\mathcal{L}^\vee= \frac{1}{  \int_{\partial \Delta_\lambda^\vee} dx  }dx,\quad d\mathcal{L}= \frac{1}{  \int_{\partial \Delta_\mu}dp  }dp.
\end{equation}
We shall use the notation $|E|$ to denote the measure of subsets $E$ with respect to $d\mathcal{L}$ or $d\mathcal{L}^\vee$.

Our secret goal is to formulate and solve a version of the real MA equation on $\partial \Delta_\lambda^\vee$ and $\partial \Delta_\mu$. A direct attempt faces the following difficulties:
\begin{itemize}
\item  The polytopes do not have natural global affine structures. On the overlap of the candidate charts, the transition functions will only be piecewise affine.

\item  Consequently, the condition of being a convex function depends on local charts.

\item  The definition of the real MA equation via local charts will not be compatible with piecewise affine transition functions.

\end{itemize}

An insight of Hultgren et al. \cite{Hultgren}, carried out in a very symmetric special case, is to adopt instead a \emph{global variational formulation}. The price is that the PDE nature of the problem is not manifest.

\subsection{Legendre transform on polytopes}\label{Legendretransformonpolytopes}

We define the \textbf{Legendre transforms} (referred to as `c-transforms' in \cite{Hultgren}, which comes from optimal transport nomenclature),
\[
L^\infty(\partial \Delta_\mu)\to L^\infty(\partial \Delta_\lambda^\vee),\quad  L^\infty(\partial \Delta_\lambda^\vee)\to L^\infty(\partial \Delta_\mu),
\]
such that for $\phi\in L^\infty(\partial \Delta_\lambda^\vee)$ and $\psi\in L^\infty(\partial \Delta_\mu)$,
\begin{equation}\label{Legendre}
\psi^*(x)= \sup_{p\in \partial\Delta_\mu}\langle x,p\rangle - \psi(p),\quad
\phi^*(p)= \sup_{x\in \partial\Delta_\lambda^\vee} \langle x, p\rangle -\phi(x).
\end{equation}
The theory works symmetrically for $\phi,\psi$.

Some of the immediate formal properties are listed below (\cf \cite[section 3]{Hultgren}):

\begin{lem}\label{EquicontinuityLegendre}
(Equicontinuity) The Legendre transforms $\phi^*, \psi^*$ have uniformly bounded Lipschitz constant.
\end{lem}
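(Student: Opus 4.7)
My plan is to exploit the fact that each Legendre transform is written as a supremum of a family of affine functions whose slopes are uniformly bounded in terms of the geometry of the polytope, not in terms of $\phi$ or $\psi$.

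First I would consider $\psi^*(x) = \sup_{p \in \partial \Delta_\mu} (\langle x, p\rangle - \psi(p))$. For each fixed $p \in \partial \Delta_\mu$, the function $x \mapsto \langle x, p \rangle - \psi(p)$ is affine in $x$ with gradient exactly $p \in M_\R$. Hence it is Lipschitz with constant $\lVert p\rVert$ (using any fixed Euclidean norm on $M_\R$ and dual norm on $N_\R$). Since $\partial \Delta_\mu$ is compact (it is the boundary of a bounded polytope containing the origin in its interior), the quantity $C_\mu := \sup_{p \in \partial \Delta_\mu}\lVert p\rVert$ is a finite constant depending only on the polytope $\Delta_\mu$, hence only on the data $\mu$ (and not on $\psi$).

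Next, I would invoke the elementary fact that a pointwise supremum of a family of $C$-Lipschitz functions is itself $C$-Lipschitz (whenever the supremum is finite, which here it is since $\partial\Delta_\mu$ is compact and $\psi \in L^\infty$). Therefore $\psi^*$ is Lipschitz on $\partial \Delta_\lambda^\vee$ with constant $\leq C_\mu$, uniformly in $\psi$. Symmetrically, $\phi^*$ is Lipschitz on $\partial \Delta_\mu$ with constant bounded by $C_\lambda := \sup_{x \in \partial\Delta_\lambda^\vee}\lVert x\rVert$, independent of $\phi$.

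There is no real obstacle here; the statement is a direct consequence of the supremum definition, the boundedness of the polytopes, and the standard fact about suprema of Lipschitz families. The only subtlety worth mentioning is that although the Legendre transforms are naturally defined on all of $N_\R$ (resp. $M_\R$), for the statement we only need the restriction to $\partial \Delta_\lambda^\vee$ (resp. $\partial \Delta_\mu$), where one should verify the Lipschitz constant is measured in the ambient Euclidean structure inherited from $N_\R$ (resp. $M_\R$), not in some intrinsic metric on the piecewise affine boundary; but since the boundary is a Lipschitz submanifold of the ambient vector space, the distinction is harmless.
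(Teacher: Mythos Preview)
Your argument is correct and is precisely the standard one: each Legendre transform is a supremum of affine functions whose slopes range over a fixed compact set, hence is Lipschitz with constant bounded by the diameter of that set, independently of $\phi$ or $\psi$. The paper does not supply its own proof of this lemma; it lists it among the ``immediate formal properties'' and refers to \cite[section 3]{Hultgren}, so your write-up is exactly the kind of routine verification the paper is taking for granted.
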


\begin{lem}\label{InvolutionLegendre}
(Involution) For arbitrary $\phi\in L^\infty(\partial \Delta_\lambda^\vee)$, the double Legendre transform $\phi^{**}\leq \phi$. If $\phi=\psi^*$ for some $\psi$, then $\phi^{**}=\phi$. 
\end{lem}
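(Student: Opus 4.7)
My plan is to treat this as the standard Fenchel--Moreau biconjugation argument, adapted to the fact that the Legendre transforms here are taken as suprema over the boundaries $\partial \Delta_\lambda^\vee$ and $\partial \Delta_\mu$ rather than over whole vector spaces. The restriction to the boundary does not affect any of the formal manipulations, since both sets are compact and the pairing $\langle x, p \rangle$ is bilinear.

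First I would prove the inequality $\phi^{**} \leq \phi$ by unpacking the two definitions in \eqref{Legendre}. For any fixed $x_0 \in \partial \Delta_\lambda^\vee$, taking $x = x_0$ as a trial point in the supremum defining $\phi^*(p)$ yields $\phi^*(p) \geq \langle x_0, p\rangle - \phi(x_0)$ for every $p \in \partial \Delta_\mu$. Rearranging gives $\langle x_0, p\rangle - \phi^*(p) \leq \phi(x_0)$ for all such $p$, and taking the supremum over $p$ produces $\phi^{**}(x_0) \leq \phi(x_0)$.

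For the equality statement, assume $\phi = \psi^*$. I would exploit the elementary monotone-reversing property of the Legendre transform: if $f \leq g$ pointwise on $\partial \Delta_\mu$ (resp.\ $\partial \Delta_\lambda^\vee$), then $f^* \geq g^*$ pointwise, which is immediate from the definition since lowering the function raises $\langle \cdot, \cdot \rangle - f$ pointwise. Applying the first step to $\psi$ in place of $\phi$ gives $\psi^{**} \leq \psi$, and then applying the monotone-reversing property to this inequality flips it to $\psi^{***} \geq \psi^*$, that is, $\phi^{**} \geq \phi$. Combined with the first step applied directly to $\phi$, this yields $\phi^{**} = \phi$.

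I do not expect a serious obstacle: the argument is essentially formal. The only point to check is that all the suprema are finite, so that $\phi^*$ and $\phi^{**}$ lie in $L^\infty$ and the inequalities are genuinely pointwise; this follows from the compactness of $\partial \Delta_\mu$ and $\partial \Delta_\lambda^\vee$ together with the $L^\infty$ hypothesis on $\phi$ and $\psi$, and in fact is already encoded in the uniform Lipschitz bound of Lemma \ref{EquicontinuityLegendre}.
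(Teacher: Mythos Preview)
Your argument is correct and is precisely the standard Fenchel--Moreau biconjugation proof. The paper does not supply its own proof of this lemma, listing it instead among the ``immediate formal properties'' with a reference to \cite[section 3]{Hultgren}, so there is nothing further to compare.
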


\begin{lem}\label{MonotonicityLegendre}
(Monotonicity)
If $\phi_1\geq \phi_2$, then $\phi_1^{*}\leq \phi_2^*$. For any constant $c\in \R$, we have $(\phi+c)^*=\phi^*-c$. Morever $\norm{\phi_1^*-\phi_2^*}_{C^0}\leq \norm{\phi_1-\phi_2}_{C^0}$.
\end{lem}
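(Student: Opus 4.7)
The plan is to prove all three assertions directly from the variational definition in \eqref{Legendre}, exploiting only the fact that the supremum is order-preserving and translation-equivariant; no geometric input about the polytopes $\partial\Delta_\mu$ or $\partial\Delta_\lambda^\vee$ is needed beyond the compactness that ensures $\phi^*$ is well-defined as a bounded function.

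First I would establish monotonicity. If $\phi_1 \geq \phi_2$ pointwise on $\partial\Delta_\lambda^\vee$, then for every fixed $p$ the function $x \mapsto \langle x,p\rangle - \phi_1(x)$ is pointwise dominated by $x \mapsto \langle x,p\rangle - \phi_2(x)$, so taking the supremum over $x \in \partial\Delta_\lambda^\vee$ yields $\phi_1^*(p) \leq \phi_2^*(p)$. The translation identity $(\phi+c)^* = \phi^* - c$ is equally immediate: the constant $-c$ can be pulled outside the supremum
\[
(\phi+c)^*(p) \;=\; \sup_{x\in\partial\Delta_\lambda^\vee}\bigl(\langle x,p\rangle - \phi(x) - c\bigr) \;=\; \phi^*(p) - c.
\]

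For the contraction property, I would combine the two ingredients above. Set $\varepsilon := \norm{\phi_1 - \phi_2}_{C^0}$, so that $\phi_2 - \varepsilon \leq \phi_1 \leq \phi_2 + \varepsilon$ pointwise. Applying monotonicity to each inequality and then using the translation identity with $c = \pm\varepsilon$ gives
\[
\phi_2^* - \varepsilon \;\leq\; \phi_1^* \;\leq\; \phi_2^* + \varepsilon,
\]
which is precisely $\norm{\phi_1^* - \phi_2^*}_{C^0} \leq \varepsilon$. The argument is symmetric in the two Legendre transforms, so the same statements apply to $\psi \mapsto \psi^*$ on $\partial\Delta_\mu$.

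I do not anticipate any genuine obstacle: the lemma is a formal consequence of the sup-definition and does not interact with the lack of a global affine structure on the polytopes or with the piecewise affine transition functions flagged in the preceding discussion. The only point requiring minor care is that $\phi^*$ is well-defined and bounded as an element of $L^\infty(\partial\Delta_\mu)$, which follows from the compactness of $\partial\Delta_\lambda^\vee$ together with the $L^\infty$ hypothesis on $\phi$ (this is already used in Lemma \ref{EquicontinuityLegendre}).
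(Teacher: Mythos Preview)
Your proof is correct and is the standard argument for these formal properties of the Legendre transform. The paper does not actually supply a proof of this lemma: it lists it among the ``immediate formal properties'' with a reference to \cite[section 3]{Hultgren}, so your argument fills in exactly what the paper leaves implicit.
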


We then introduce a class of functions $\mathcal{P}\subset C^0(\partial \Delta_\lambda^\vee )$ and $\mathcal{P}^\vee\subset C^0(\partial\Delta_\mu)$,  defined as the images of $L^\infty$ under the Legendre transform. By the involution property, the Legendre transform sets up a canonical isomorphism $\mathcal{P}\simeq \mathcal{P}^\vee$, which  is isometric with respect to the $C^0$-norms by Lemma \ref{MonotonicityLegendre}. The  formula
\[
\phi(x)=\max_{p\in \partial \Delta_\mu } \langle x,p\rangle -\phi^*(p),\quad \phi^*(p)=\max_{x\in \partial \Delta_\lambda^\vee} \langle x,p\rangle- \phi(x)
\]
provides a canonical \emph{extension} of functions in $\mathcal{P},\mathcal{P}^\vee$ to convex functions on the ambient Euclidean spaces $N_\R$ and $M_\R$. We can thus think of $\mathcal{P}, \mathcal{P}^\vee$ as `\emph{global convex functions}', which in particular have gradient contained in $\Delta_\mu, \Delta_\lambda^\vee$ respectively.

\subsection{Conjugate sets and gradients}\label{conjugategradient}

We begin with a classical analogy. In the classical Legendre transform theory for convex functions on $\R^n$, for a given position variable $x$, the gradient $\nabla \phi(x)$ is recovered by the conjugate momentum $p$ achieving $\phi(x)=\langle x,p\rangle-\phi^*(p)$. In the global context of polytopes, finding a reasonable analogue of the conjugate $p$ to $x$ is a rather delicate question, and there are at least two natural notions.

\begin{Def}
Given a function $\phi\in \mathcal{P}$, 
let $x\in \partial \Delta_\lambda^\vee$, then the \emph{conjugate set} is 
\[
\bar{\nabla} \phi(x):=\{   p\in \partial\Delta_\mu | \phi^*(p)+ \phi(x)= \langle x,p\rangle    \}.
\] 
For $E\subset \partial \Delta_\lambda^\vee$, the conjugate set is
\[
\bar{\nabla} \phi(E):=\{   p\in \partial\Delta_\mu | \phi^*(p)+ \phi(x)= \langle x,p\rangle,  \text{ for some } x\in E    \}.
\]
We leave the reader to define the Legendre dual version $\bar{\nabla} \phi^*$. 
\end{Def}

In the polytope setting, there is a more subtle new notion, which better captures the classical intuition of gradient. To motivate this, we first delve a little into polyhedral geometry. The top dimensional faces $\Delta_m^\vee$ of $\partial \Delta_\lambda^\vee$ are labelled by the vertices $m$ of $\Delta$, and concretely
\[
\Delta_m^\vee=\{ \langle x, m\rangle =-\lambda(m)  \}\subset \partial \Delta_\lambda^\vee.
\]
Likewise the vertices $n$ of $\Delta^\vee$ label the faces $\Delta_n\subset \partial \Delta_\mu$.

Given $\phi\in \mathcal{P}$, we can restrict $\phi$ to a convex function on $\text{Int}(\Delta_m^\vee)$, which is just a  convex domain in some Euclidean hyperplane. The classical gradient of this restricted function naturally lies in the quotient space $M_\R/\R m$, and a moment of thought reveals the gradient is contained in the image of $\Delta_\mu$ under the quotient map $M_\R\to M_\R/\R m$.

\begin{lem}\label{Hm+-}
Each fibre under the quotient map \[\Delta_\mu\subset M_\R\to \text{Image}(\Delta_\mu)\subset M_\R/\R m\] intersects each of the following two unions of faces $H_m^+,H_m^-\subset\partial \Delta_\mu$ uniquely at one point:
\[
H_m^+= \bigcup \{ \Delta_n  : \text{vertex $n\in  \partial \Delta^\vee$ with $\langle n, m\rangle =1$}   \} ,
\]
\[
H_m^-= \bigcup \{ \Delta_n  : \text{vertex $n\in  \partial \Delta^\vee$ with $\langle n, m\rangle \leq -1$}   \} ,
\]
In particular $H_m^+, H_m^-$ each projects homeomorphically onto $\text{Image}(\Delta_\mu)\subset M_\R/\R m$.	

\end{lem}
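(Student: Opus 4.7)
My plan is to read the lemma as the statement that, relative to the direction $m$, the set $H_m^+$ is exactly the ``upper boundary'' of the convex body $\Delta_\mu$, and $H_m^-$ is exactly the ``lower boundary'', after which the claim is a standard convex-body fact. The key observation is that for any vertex $n$ of $\Delta^\vee$, the integer $\langle n, m\rangle$ is at most $1$ by the defining inequality of $\Delta$, and is nonzero by the extra assumption \eqref{extraassumption}; hence for each vertex $n$ exactly one of the alternatives $\langle n,m\rangle = 1$ or $\langle n,m\rangle \leq -1$ holds. Therefore the two unions $H_m^+$ and $H_m^-$ are disjoint and together exhaust all faces $\Delta_n$ whose outward normals are not orthogonal to $m$; in fact, under \eqref{extraassumption}, every face of $\partial\Delta_\mu$ lies in exactly one of $H_m^+$ or $H_m^-$.

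Next I would pick a fibre of the quotient map, i.e.\ a line $\ell = p_0 + \R m$ that meets $\Delta_\mu$. Since $\Delta_\mu$ is a bounded convex body (the $\mu(n)$ are negative and the vertices of $\Delta^\vee$ positively span $N_\R$), the intersection $\ell\cap\Delta_\mu$ is a nonempty compact interval, say $[t_-, t_+]$, with $p_0+t_\pm m\in\partial\Delta_\mu$. At the top endpoint $p_0+t_+m$ lies on some facet $\Delta_n$, and the condition that $p_0+tm$ leaves $\Delta_\mu$ for $t>t_+$ forces $\langle m,n\rangle>0$, so by the dichotomy $\langle m,n\rangle = 1$ and $\Delta_n\subset H_m^+$. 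Conversely, any point of $\ell\cap H_m^+$ lies on some facet $\Delta_n$ with $\langle m,n\rangle=1>0$, and convexity of $\Delta_\mu$ forces this to be the upper endpoint $t_+$. So $\ell\cap H_m^+$ is the single point $p_0+t_+m$. The identical argument, with signs reversed, shows $\ell\cap H_m^-$ is the single point $p_0+t_-m$.

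Finally, to upgrade this bijectivity to a homeomorphism onto the image of $\Delta_\mu$ in $M_\R/\R m$, I would note that $H_m^\pm$ are closed subsets of the compact set $\partial\Delta_\mu$, hence compact, while the quotient map $M_\R\to M_\R/\R m$ is continuous and the image of $\Delta_\mu$ is Hausdorff. The map from each of $H_m^\pm$ to $\mathrm{Image}(\Delta_\mu)$ is continuous (restriction of the quotient), surjective by the argument above applied to any lift of a given point in the image, and injective by the fibre analysis. A continuous bijection from a compact space to a Hausdorff space is automatically a homeomorphism, which completes the proof.

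The only delicate point is the use of the extra assumption \eqref{extraassumption}: without it one could have facets $\Delta_n$ of $\partial\Delta_\mu$ parallel to $m$ (i.e.\ $\langle m,n\rangle=0$), in which case a fibre of the projection could intersect such a facet in a whole segment and the ``top/bottom'' dichotomy would break down. So this is where the hypothesis on the reflexive polytope is genuinely used; the rest of the argument is formal convex geometry.
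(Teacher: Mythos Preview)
Your argument is correct and follows essentially the same route as the paper's proof: both intersect the line $p_0+\R m$ with the convex body $\Delta_\mu$, identify the two endpoints of the resulting segment, and use that the outward normal $n$ at the exiting facet satisfies $\langle m,n\rangle>0$ (resp.\ $<0$), which together with integrality and $\langle m,n\rangle\leq 1$ forces $\langle m,n\rangle=1$ (resp.\ $\leq -1$). You supply more detail than the paper does, in particular the converse direction for uniqueness and the compact--Hausdorff argument for the homeomorphism.

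One correction, however: your final paragraph is mistaken. The extra hypothesis \eqref{extraassumption} is \emph{not} used in this lemma, and the paper's proof does not invoke it. If some vertex $n_0$ has $\langle m,n_0\rangle=0$, the facet $\Delta_{n_0}$ simply belongs to neither $H_m^+$ nor $H_m^-$; a fibre may meet $\Delta_{n_0}$ in a whole segment, but your own uniqueness argument still goes through, since any point of $\ell\cap H_m^+$ lies on a facet with strictly positive pairing and is therefore forced to be the upper endpoint. The ``dichotomy'' you set up in your first paragraph is thus stronger than what the proof actually needs. The hypothesis \eqref{extraassumption} enters the paper only later, in the analysis of anomalous conjugate points and the bad set (where $\langle m,n\rangle=0$ genuinely causes trouble), not here.
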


\begin{rmk}
We leave the reader to write down the Legendre dual statement involving the self explanatory notations $H_n^+, H_n^-$.
\end{rmk}

\begin{proof}
For every point $p\in \Delta_\mu$, the line $p+\R m$ intersects the convex polytope $\Delta_\mu$ in a line segment, with two endpoints (unless we are in the degenerate setting where the line segment becomes a point). Now the vertices $n$ prescribe the normal vectors to the faces. In order for the line segment to exit $\Delta_\mu$ in the positive (resp. negative) $m$ direction, we need $\langle n,m\rangle >0$ (resp. $<0$). By assumption, all $n,m$ are integral vectors, so $\langle m,n\rangle$ is an integer. Furthermore by assumption $\langle m,n\rangle \leq 1$ for any choice of vertices $m,n$.
The result follows.
\end{proof}

It is desirable that the gradient $p$ at $x$ is contained in $H_m^+$ so that it has a unique natural identification with a point in $M_\R/\R m$.

\begin{Def}
Let $x\in \partial \Delta_\lambda^\vee$, which may lie on possibly several faces. Then the \emph{gradient set} of $x$ is the intersection
\[
\nabla \phi (x)= \bar{\nabla}\phi(x)\cap \bigcap_m \{  H_m^+ : x\in \Delta_m^\vee  \}.
\]
We say $x$ has \emph{anomalous conjugate points} if $\bar{\nabla}\phi(x)\setminus \nabla\phi(x)$ is nonempty.
Likewise we define $\nabla\phi(E)$ for subsets $E$, and the Legendre dual versions etc.
\end{Def}

\begin{rmk}
If $p\in \bar{\nabla}\phi(x)$ is a conjugate point, then tautologically $x\in \bar{\nabla}\phi^*(p)$. However $p\in \nabla\phi(x)$ does not automatically imply $x\in \nabla\phi^*(p)$; one sufficient condition is that $p$ lies on only one face.

\end{rmk}

While conjugate points exist quite obviously by the definition of the Legendre transform, the existence of at least one gradient $p$ for any given $x$ requires a little more thought:

\begin{lem}\label{gradientexistencelem}
If $x\in \partial \Delta_\lambda^\vee$ has an anomalous conjugate point $p'$, then it also admits a gradient $p\in \nabla\phi(x)$ (so in particular gradients always exist).
\end{lem}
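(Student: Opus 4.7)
The plan is to recast the problem as a convex optimization on $\Delta_\mu$. Let $\phi^*$ also denote its natural convex extension to $M_\R$ via $\phi^*(p) = \sup_{y \in \partial\Delta_\lambda^\vee}\langle y, p\rangle - \phi(y)$, and set $h(p) := \phi^*(p) - \langle x, p\rangle$. By the Legendre involution $h \geq -\phi(x)$, and the minimum set $S := \{p \in M_\R : h(p) = -\phi(x)\}$ satisfies $\bar{\nabla}\phi(x) = S \cap \partial \Delta_\mu$. The hypothesis that $p'$ is a conjugate point of $x$ forces $S \cap \Delta_\mu$ to be nonempty; it is also convex and compact. Write $M_x := \{m \in \text{vertex}(\Delta) : x \in \Delta_m^\vee\}$; since $x \in \partial\Delta_\lambda^\vee$ we have $M_x \neq \emptyset$, and crucially $\langle x, m\rangle = -\lambda(m) > 0$ for every $m \in M_x$.

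The main step is a $\text{cone}(M_x)$-invariance claim: if $p \in S \cap \Delta_\mu$, $m \in M_x$, and $p + tm \in \Delta_\mu$ for some $t \geq 0$, then $p + tm \in S$. For the lower bound, $p \in S$ is equivalent to $x$ being a subgradient of $\phi^*$ at $p$, whence $\phi^*(p+tm) \geq \phi^*(p) + t\langle x, m\rangle = \phi^*(p) - t\lambda(m)$. For the matching upper bound, pick $y^* \in \partial\Delta_\lambda^\vee$ realizing the supremum in $\phi^*(p+tm) = \langle y^*, p+tm\rangle - \phi(y^*)$; combining with $\phi^*(p) \geq \langle y^*, p\rangle - \phi(y^*)$ and $\langle y^*, m\rangle \leq -\lambda(m)$ (since $y^* \in \Delta_\lambda^\vee$) yields $\phi^*(p+tm) - \phi^*(p) \leq -t\lambda(m)$. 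Equality forces $h(p+tm) = h(p) = -\phi(x)$, so $p + tm \in S$.

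To extract a gradient, maximize the linear functional $\ell(p) := \langle x, p\rangle$ over the compact convex set $S \cap \Delta_\mu$ and let $p^*$ be a maximizer. If for some $m \in M_x$ we had $p^* + tm \in \Delta_\mu$ with $t > 0$, invariance would place $p^* + tm$ in $S$ with $\ell(p^*+tm) = \ell(p^*) + t\langle x, m\rangle > \ell(p^*)$, contradicting maximality. Hence $p^* \in \partial\Delta_\mu$, and for each $m \in M_x$ there is a facet $\Delta_n \ni p^*$ whose defining vertex $n \in \text{vertex}(\Delta^\vee)$ satisfies $\langle n, m\rangle > 0$ (obtained by letting $t \to 0^+$ in a violated constraint). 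Integrality of $n, m$ together with the universal inequality $\langle n, m\rangle \leq 1$ then forces $\langle n, m\rangle = 1$, so $p^* \in H_m^+$. Since this holds for every $m \in M_x$, we conclude $p^* \in \bar{\nabla}\phi(x) \cap \bigcap_{m \in M_x} H_m^+ = \nabla\phi(x)$.

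The nontrivial step is the invariance claim; once it is in hand, the rest is a routine convex-optimization argument. Notably, the extra hypothesis \eqref{extraassumption} is not invoked here—only the universal bound $\langle n, m\rangle \leq 1$ and integrality, combined with the strict positivity that comes from maximality, play a role.
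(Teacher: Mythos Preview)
Your proof is correct and follows essentially the same approach as the paper. Both arguments hinge on the same key estimate $\phi^*(p+tm)-\phi^*(p)\leq -t\lambda(m)$ for $m\in M_x$, which propagates conjugacy along the cone generated by $M_x$; the paper then picks an extremal point $p$ of $(p'+\sum_i\R_{\geq 0}m_i)\cap\Delta_\mu$ with $(p+\sum_i\R_{\geq 0}m_i)\cap\Delta_\mu=\{p\}$, whereas you achieve the same thing by maximizing the linear functional $\langle x,\cdot\rangle$ over $S\cap\Delta_\mu$ --- a cleaner and more explicit way to locate that extremal point.
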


\begin{proof}
Suppose $x$ lies on the intersection of some faces $\Delta_{m_i}^\vee$. By the definition of conjugate point
$
\phi^*(p')+\phi(x)= \langle x,p\rangle.
$
Consider the polyhedral set $ (p'+\sum_i \R_{\geq 0} m_i )\cap \Delta_\mu$, which must have some extremal point $p$ such that $
 (p+\sum_i \R_{\geq 0} m_i )\cap \Delta_\mu
$
consists of only one point $p$. Then for each $m_i$, there must be some $n_i$ with $\langle m_i, n_i\rangle>0$ such that $p\in \Delta_{n_i}$, hence $p\in \bigcap H_{m_i}^+$.

We claim $p$ is also a conjugate point of $x$. We write $p=p'+\sum s_i m_i$ with $s_i\geq 0$. 
Now since $\phi^*$ as a function on $M_\R$ has gradient contained inside $\Delta_\lambda^\vee$, we have
\[
\phi^*(p)-\phi^*(p') \leq \max_{y\in \Delta_\lambda^\vee} \langle p-p', y\rangle \leq \sum s_i \max_{y\in \Delta_\lambda^\vee} \langle m_i, y\rangle \leq -\sum s_i \lambda(m_i) .
\]
Hence
\[
\phi^*(p)+ \phi(x) \leq \phi^*(p')+\phi(x) - \sum s_i\lambda(m_i)=\langle x, p'\rangle -\sum s_i\lambda(m_i)= \langle x, p'+\sum s_im_i \rangle = \langle x, p\rangle. 
\]
This combined with the tautological inequality $\phi^*(p)+ \phi(x)\geq  \langle x, p\rangle$ shows that the equalities hold, so $p$ is a conjugate point, hence a gradient.
\end{proof}

\begin{rmk}
	The arguments above will not work if we attempt to define the gradient to lie in $H_m^-$ instead of $H_m^+$. In fact, very often there is no conjugate point in $H_m^-$.
\end{rmk}

\begin{cor}\label{anomalousrigidity}
Let $p\in \bar{\nabla}\phi(x)$, such that there is an anomalous conjugate point $p'$ with $p=p'+\sum s_im_i$ and $s_i\geq 0$ as above. Then for each $m_i$ with $x\in \Delta_{m_i}^\vee$, either $s_i=0$ or $\bar{\nabla}\phi^*(p)\subset \Delta_{m_i}^\vee$.

\end{cor}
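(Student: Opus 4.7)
The plan is to exploit the equality case in the chain of inequalities appearing in the proof of Lemma \ref{gradientexistencelem}, transposed from $p'$ to an arbitrary conjugate point of $p$.

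First I would extract a precise identity for $\phi^*(p)-\phi^*(p')$. Since both $p$ and $p'$ are conjugate points of $x$, the definition gives $\phi^*(p)+\phi(x)=\langle x,p\rangle$ and $\phi^*(p')+\phi(x)=\langle x,p'\rangle$. Subtracting and using $p-p'=\sum s_im_i$ together with $x\in\Delta_{m_i}^\vee$ (so $\langle x,m_i\rangle=-\lambda(m_i)$), I get the exact equality
\[
\phi^*(p)-\phi^*(p')=-\sum_i s_i\lambda(m_i).
\]
This replaces the inequality used in the existence proof with an equality when the extra points $m_i$ actually touch a face containing $x$.

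Next I would test this identity against an arbitrary $y\in\bar{\nabla}\phi^*(p)$. By definition $\phi^*(p)=\langle y,p\rangle-\phi(y)$, while tautologically $\phi^*(p')\geq \langle y,p'\rangle-\phi(y)$. Subtracting yields $\phi^*(p)-\phi^*(p')\leq \langle y,p-p'\rangle=\sum_i s_i\langle y,m_i\rangle$. Combining with the identity in the previous step gives
\[
\sum_i s_i\bigl(\langle y,m_i\rangle+\lambda(m_i)\bigr)\geq 0.
\]
Now I would invoke the constraint $y\in\Delta_\lambda^\vee$, which forces $\langle y,m_i\rangle+\lambda(m_i)\leq 0$ for every $i$. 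Since also $s_i\geq 0$, each summand is nonpositive, so the inequality can only be saturated if every term vanishes. Consequently, for each $i$ with $s_i>0$, one has $\langle y,m_i\rangle=-\lambda(m_i)$, i.e.\ $y\in\Delta_{m_i}^\vee$. This is exactly the claim: if $s_i>0$ then $\bar{\nabla}\phi^*(p)\subset\Delta_{m_i}^\vee$.

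I do not anticipate a serious obstacle here; the whole argument is a clean equality-case analysis, and the only subtlety is bookkeeping the signs so that the conclusion pins down individual summands rather than just the total. The key conceptual input is Lemma \ref{gradientexistencelem}'s estimate on $\phi^*$ being tight precisely when $x$ saturates all relevant face conditions, and this is what upgrades an existence statement into the rigidity asserted by the corollary.
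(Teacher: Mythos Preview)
Your proof is correct and follows essentially the same approach as the paper: both derive the identity $\phi^*(p)-\phi^*(p')=-\sum s_i\lambda(m_i)$ and then test against an arbitrary $y\in\bar{\nabla}\phi^*(p)$ via the tautological inequality $\phi^*(p')\geq\langle y,p'\rangle-\phi(y)$ to force each summand $s_i(\langle y,m_i\rangle+\lambda(m_i))$ to vanish. Your derivation of the identity directly from both $p,p'$ being conjugate to $x$ is in fact slightly cleaner than the paper's appeal to the equality case of the preceding lemma, but the content is identical.
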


\begin{proof}
Since the equalities are achieved in the proof of Lemma \ref{gradientexistencelem}, we have $\phi^*(p)-\phi^*(p')=-\sum s_i\lambda(m_i)$. For any $y\in \partial\Delta_\lambda^\vee$,  we have
\[
\begin{split}
\phi(y)\geq \langle y, p'\rangle -\phi^*(p') =\langle y, p'\rangle -\phi^*(p)-\sum s_i\lambda(m_i)
\\
= \langle y, p\rangle -\phi^*(p)-\sum s_i(\lambda(m_i)+ \langle y, m_i\rangle).
\end{split}
\]
Now suppose $y$ is any conjugate point of $p$, then $\phi(y)=\langle y, p\rangle-\phi^*(p)$, whence
\[
\sum s_i (\lambda(m_i)+ \langle y, m_i\rangle)\geq 0. 
\]
By construction $\lambda(m_i)+ \langle y, m_i\rangle\leq 0 $ for any $y\in \Delta_\lambda^\vee$.
For any $i$, as long as $s_i>0$, then $\langle y, m_i\rangle =-\lambda(m_i)$, namely $y\in \Delta_{m_i}^\vee$.
\end{proof}

\begin{rmk}\label{anomalousrigidityrmk}
In the special case that $x$ lies on the interior of one face, then there is only one $m=m_i$ involved, with $p=p'+sm$, and since $p\neq p'$, we are forced to have $s>0$, and $\bar{\nabla}\phi^*(p)\subset \Delta_{m}^\vee$. 	
\end{rmk}

\begin{prop}
(Comparison of gradient notions)  Let $x\in \text{Int}(\Delta_m^\vee)$, then $p\in H_m^+$ is a gradient in $\nabla\phi(x)$, if and only if
$p\in  M_\R/\R m$ 
 under the identification $H_m^+\simeq \text{Image}(\Delta_\mu)\subset M_\R/\R m$
 is a classical gradient of the convex function $\phi$ on $\text{Int}(\Delta_m^\vee)$.

\end{prop}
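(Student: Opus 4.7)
The forward direction follows immediately from the Fenchel--Young identity defining conjugate points. If $p\in\nabla\phi(x)\cap H_m^+$ then $\phi^*(p)+\phi(x)=\langle x,p\rangle$, and the definition of $\phi^*$ as a supremum gives the subgradient inequality $\phi(y)\geq\phi(x)+\langle y-x,p\rangle$ for every $y\in\partial\Delta_\lambda^\vee$. Restricting to $y\in\text{Int}(\Delta_m^\vee)$ we have $\langle y-x,m\rangle=0$, so $\langle y-x,p\rangle$ depends only on $\bar p:=p\bmod\R m$, and $\bar p$ is visibly a classical subgradient of $\phi|_{\text{Int}(\Delta_m^\vee)}$ at $x$.

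For the reverse direction, I start with a classical subgradient $\bar p$ and aim to show the unique lift $p\in H_m^+$ from Lemma~\ref{Hm+-} lies in $\nabla\phi(x)$. I would work with the canonical convex extension of $\phi$ to $N_\R$ introduced in section~\ref{Legendretransformonpolytopes}. Standard convex analysis (the subdifferential of the restriction of a finite convex function to an affine subspace equals the projection of the full subdifferential) produces a full subgradient $p_{\mathrm{full}}\in\Delta_\mu$ of the extended $\phi$ at $x$ with $p_{\mathrm{full}}\equiv\bar p\bmod\R m$; equivalently $\phi(y)\geq\phi(x)+\langle y-x,p_{\mathrm{full}}\rangle$ for all $y\in\partial\Delta_\lambda^\vee$, so $p_{\mathrm{full}}$ is already a conjugate point.

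I then push $p_{\mathrm{full}}$ along $+m$ to the endpoint $p\in H_m^+$ of the fibre $\{p_{\mathrm{full}}+sm\}\cap\Delta_\mu$, writing $p=p_{\mathrm{full}}+tm$ with $t\geq 0$. The decisive geometric input is that every $y\in\partial\Delta_\lambda^\vee$ satisfies $\langle y,m\rangle\leq -\lambda(m)=\langle x,m\rangle$, hence $\langle y-x,m\rangle\leq 0$, so that
\[
\langle y-x,p\rangle=\langle y-x,p_{\mathrm{full}}\rangle+t\langle y-x,m\rangle\leq\phi(y)-\phi(x).
\]
Taking the supremum and applying Fenchel--Young in the reverse direction yields $\phi^*(p)+\phi(x)=\langle x,p\rangle$, so $p\in\bar\nabla\phi(x)$. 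Since $x\in\text{Int}(\Delta_m^\vee)$ lies on a single face, $\nabla\phi(x)=\bar\nabla\phi(x)\cap H_m^+\ni p$, as required.

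The main obstacle I expect is the lifting step opening the reverse direction: transferring subgradient information from the codimension-one slice $\text{Int}(\Delta_m^\vee)$ to the ambient space $N_\R$. Conceptually this is where the transverse degree of freedom encoded in $\R m$ must be filled in, and it rests on the classical surjectivity of the projection of subdifferentials under restriction of a finite convex function to an affine subspace. Once this lift is at hand, the remaining $+m$ monotonicity is essentially the same computation already present in the proof of Lemma~\ref{gradientexistencelem}.
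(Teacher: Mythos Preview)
Your argument is correct, but the reverse direction follows a genuinely different route from the paper's proof.

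The paper does not invoke the abstract subdifferential projection result. Instead it works directly with the specific lift $p\in\Delta_n\subset H_m^+$: for arbitrary $y\in\partial\Delta_\lambda^\vee$, it uses $\langle n,m\rangle=1$ to write $y=y'-sn$ with $y'$ on the hyperplane containing $\Delta_m^\vee$ and $s\geq 0$, then combines the subgradient inequality $\phi(y')\geq\phi(x)+\langle y'-x,p\rangle$ on the hyperplane with the Lipschitz bound $\phi(y')\leq\phi(y)-s\mu(n)$ (coming from $\partial\phi\subset\Delta_\mu$), and finally uses $\langle p,n\rangle=-\mu(n)$ to collapse the two into $\phi(y)\geq\phi(x)+\langle y-x,p\rangle$. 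So the paper moves in the $n$ direction and never needs an intermediate lift $p_{\mathrm{full}}$.

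Your approach instead lifts $\bar p$ to some $p_{\mathrm{full}}\in\partial\phi(x)\subset\Delta_\mu$ via the Hahn--Banach/projection principle, and then pushes along $+m$, exactly paralleling Lemma~\ref{gradientexistencelem}. This is cleaner conceptually and recycles an argument already in the paper, at the cost of citing a fact (surjectivity of the subdifferential projection for finite convex functions) that the paper avoids. One small imprecision: you call $p_{\mathrm{full}}$ ``already a conjugate point'', but conjugate points are by definition in $\partial\Delta_\mu$, whereas $p_{\mathrm{full}}$ may lie in the interior of $\Delta_\mu$; this does not affect your argument since you only use the subgradient inequality $\phi(y)\geq\phi(x)+\langle y-x,p_{\mathrm{full}}\rangle$.
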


\begin{proof}
If $p\in \nabla\phi(x)$, then restricted to $\text{Int}(\Delta_m^\vee)$ we have
\[
\phi(y)\geq \langle y,p\rangle- \phi^*(p)= \phi(x)+ \langle y-x, p\rangle,\quad \forall y\in\text{Int}(\Delta_m^\vee) ,
\]
which means $p$ is a classical gradient.

For the converse, we suppose $p\in M_\R/\R m$ is a classical gradient at $x$, which is identified uniquely with $p\in \Delta_n\subset H_m^+$. 
Since $\Delta_\lambda^\vee$ lies in the half space $\langle m, x\rangle+\lambda(m)\leq 0$, and $\langle n,m\rangle=1$, we can write any $y\in \partial \Delta_\lambda^\vee$ as 
\[
y=y'-sn,\quad s\geq 0, \quad \langle m, y'\rangle+\lambda(m)= 0.
\] 
Recall $\phi$ has a canonical extension to a convex function on $N_\R$. Restricted to the hyperplane $ \langle m,\cdot\rangle+\lambda(m)= 0$, the classical gradient property implies
\[
\phi(y') \geq \phi(x)+ \langle y'-x, p\rangle.
\]
But since the convex function $\phi$ on $N_\R$ has gradient contained in $\Delta_\mu$,
\[
\phi(y') \leq \phi(y) + \max_{ \Delta_\mu }\langle y'-y, \cdot\rangle \leq \phi(y) -s\mu(n).
\]
Combining the above,
\[
\phi(y)\geq \phi(x)+ \langle y'-x, p\rangle +s\mu(n)= \phi(x)+ \langle y'-x-sn, p\rangle =\phi(x)+ \langle y-x, p\rangle.
\]
Since $y$ is arbitrary, we deduce $\phi(x)+ \phi^*(p)=\langle x, p\rangle$, namely $p\in \bar{\nabla}\phi(x)$. Since $x$ lies on only one face, and $p\in H_m^+$, we conclude $p\in \nabla\phi(x)$.
\end{proof}

\subsection{Variational problem}

We now set up a global functional on $\mathcal{P}\simeq \mathcal{P}^\vee$:
\begin{equation}\label{functional}
\mathcal{F}(\phi)= \int_{\partial \Delta_\lambda^\vee} \phi d\mathcal{L}^\vee +  \int_{\partial \Delta_\mu} \phi^* d\mathcal{L}.
\end{equation}
This functional is manifestly symmetric in terms of the Legendre transform; this `mirror symmetry' is fundamental to our approach. In this variational problem, the \emph{existence of a minimizer} is quite transparent, but the local PDE nature of the minimizer is less obvious, since the definition of the Legendre transform is not local, and the differentiability of the functional at the critical point is not a priori clear.

\begin{thm}\label{existence}
	The functional admits a minimizer in $\mathcal{P}$, henceforth denoted as $\phi$.
\end{thm}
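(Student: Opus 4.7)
The plan is to apply the direct method of the calculus of variations, using the equicontinuity, involution, and monotonicity properties of the Legendre transform already established in Lemmas \ref{EquicontinuityLegendre}, \ref{InvolutionLegendre}, \ref{MonotonicityLegendre}. First I would exploit the shift symmetry $(\phi+c)^*=\phi^*-c$: since $d\mathcal{L}$ and $d\mathcal{L}^\vee$ are both probability measures, the two added constants cancel and $\mathcal{F}(\phi+c)=\mathcal{F}(\phi)$. So the natural thing is to normalize a minimizing sequence $\phi_j\in \mathcal{P}$ by imposing, say, $\int_{\partial\Delta_\lambda^\vee}\phi_j\,d\mathcal{L}^\vee=0$.

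Next I would establish the uniform $C^0$ bound. By Lemma \ref{EquicontinuityLegendre} every $\phi_j\in\mathcal{P}$ is Lipschitz with a constant depending only on the diameter of $\Delta_\mu$; together with the normalization this yields $\|\phi_j\|_{C^0(\partial\Delta_\lambda^\vee)}\leq C$. From the defining formula $\phi_j^*(p)=\sup_{x\in\partial\Delta_\lambda^\vee}\langle x,p\rangle-\phi_j(x)$ and compactness of $\partial\Delta_\lambda^\vee$, we also get $\|\phi_j^*\|_{C^0(\partial\Delta_\mu)}\leq C'$, so $\mathcal{F}(\phi_j)$ is uniformly bounded and in particular $\inf \mathcal{F}>-\infty$. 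Arzel\`a--Ascoli then extracts a subsequence $\phi_j\to \phi$ uniformly on $\partial\Delta_\lambda^\vee$, with $\phi$ Lipschitz.

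The key structural point is that the limit $\phi$ still lies in $\mathcal{P}$, and this is where the monotonicity/involution package does the work. By Lemma \ref{MonotonicityLegendre} the Legendre transform is a $C^0$-contraction, so $\phi_j^*\to \phi^*$ and $\phi_j^{**}\to \phi^{**}$ uniformly. Since each $\phi_j$ already belongs to $\mathcal{P}$, Lemma \ref{InvolutionLegendre} gives $\phi_j^{**}=\phi_j$, and passing to the limit yields $\phi=\phi^{**}$, i.e.\ $\phi\in\mathcal{P}$. Finally, the functional $\mathcal{F}$ is continuous with respect to $C^0$-convergence of $\phi$ (since we integrate $\phi$ and $\phi^*$ against fixed probability measures, and $\phi\mapsto\phi^*$ is $C^0$-continuous), so
\[
\mathcal{F}(\phi)=\lim_{j\to\infty}\mathcal{F}(\phi_j)=\inf_{\mathcal{P}}\mathcal{F},
\]
producing the desired minimizer.

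I do not anticipate any genuine obstacle here: the only subtle point is verifying that the $C^0$-limit of a minimizing sequence stays inside $\mathcal{P}$ rather than escaping to the larger space $L^\infty$ or a merely lower-semicontinuous envelope, and this is taken care of essentially for free by the involution identity $\phi_j^{**}=\phi_j$ combined with the $C^0$-continuity of the Legendre transform. The real work of the paper, namely extracting PDE information (a real Monge--Amp\`ere equation) from this abstract minimizer, is a separate question that arises only after existence is in hand.
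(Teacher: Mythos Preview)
Your proposal is correct and follows essentially the same direct-method argument as the paper: exploit the additive invariance $\mathcal{F}(\phi+c)=\mathcal{F}(\phi)$ to normalize, use the uniform Lipschitz bound from Lemma~\ref{EquicontinuityLegendre} together with Arzel\`a--Ascoli, and pass to the limit using the $C^0$-continuity of the Legendre transform from Lemma~\ref{MonotonicityLegendre}. The only cosmetic difference is that the paper normalizes via $\max_{\partial\Delta_\lambda^\vee}\phi=0$ rather than your mean-zero condition, and your write-up is slightly more explicit in verifying $\phi\in\mathcal{P}$ via the involution identity.
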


\begin{proof}(\cf \cite[Thm 5.2]{Hultgren})
By Lemma \ref{MonotonicityLegendre} and the fact that $d\mathcal{L}, d\mathcal{L}^\vee$ are both probability measures, for any $c\in \R$ we have $\mathcal{F}(\phi+c)=\mathcal{F}(\phi)$. This allows us to impose without loss that $\max_{\partial\Delta_\lambda^\vee} \phi=0$. Then by the uniform Lipschitz estimate in $\mathcal{P}$, we can envoke Arzela-Ascoli to take a $C^0$ limit of a minimizing sequence of the functional. By Lemma \ref{MonotonicityLegendre}, the Legendre transformed functions also converge in $C^0$, so the limit attains the minimum.
\end{proof}

\begin{thm}\label{uniqueness}
The minimizer is unique up to an additive constant.
\end{thm}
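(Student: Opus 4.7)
The plan is a standard convexity-plus-rigidity argument. First I would establish that $\mathcal{F}$ is convex on $\mathcal{P}$: the term $\int_{\partial \Delta_\lambda^\vee} \phi \, d\mathcal{L}^\vee$ is linear in $\phi$, and for each fixed $p \in \partial \Delta_\mu$ the value $\phi^*(p) = \sup_x (\langle x,p\rangle - \phi(x))$ is a pointwise supremum of $\phi$-linear functionals and hence convex in $\phi$; integration against $d\mathcal{L}$ preserves convexity.

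Suppose $\phi_1, \phi_2 \in \mathcal{P}$ are both minimizers. Set $\phi_{1/2} = \tfrac{1}{2}(\phi_1 + \phi_2)$, and replace it by $\phi_{1/2}^{**} \in \mathcal{P}$ if needed; by Lemma \ref{InvolutionLegendre}, $\phi_{1/2}^{**} \leq \phi_{1/2}$ while $(\phi_{1/2}^{**})^* = \phi_{1/2}^*$, so this replacement can only decrease $\mathcal{F}$. Convexity gives
\[
\mathcal{F}(\phi_{1/2}) \leq \tfrac{1}{2}(\mathcal{F}(\phi_1) + \mathcal{F}(\phi_2)) = \min \mathcal{F},
\]
forcing equality throughout. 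Consequently the pointwise inequality $\phi_{1/2}^*(p) \leq \tfrac{1}{2}(\phi_1^*(p) + \phi_2^*(p))$ has equal integrals against the fully supported measure $d\mathcal{L}$; since both sides are Lipschitz by Lemma \ref{EquicontinuityLegendre}, equality holds for every $p \in \partial\Delta_\mu$.

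Equality in this pointwise Legendre inequality at $p$ forces any $x$ attaining the sup defining $\phi_{1/2}^*(p)$ to simultaneously attain the sups for $\phi_1^*(p)$ and $\phi_2^*(p)$. So at every $p$ there is a common conjugate $x_p \in \bar\nabla \phi_1^*(p) \cap \bar\nabla \phi_2^*(p)$ with
\[
(\phi_1 - \phi_2)(x_p) = (\phi_2^* - \phi_1^*)(p).
\]
Running the Legendre-dual version of the argument (valid since $\phi_i = \phi_i^{**}$) yields the symmetric statement: at every $x \in \partial \Delta_\lambda^\vee$ there is a common conjugate $p_x \in \bar\nabla \phi_1(x) \cap \bar\nabla \phi_2(x)$ with the analogous identity.

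The final step, which I expect to be the main obstacle, is to upgrade these pointwise rigidity identities into the global statement that $f := \phi_1 - \phi_2$ is constant on $\partial\Delta_\lambda^\vee$. My approach would be to fix $x_0$ and set $c_0 := f(x_0)$, pick a common conjugate $p_0$ of $x_0$, and observe that every point of $\bar\nabla\phi_1^*(p_0) \cap \bar\nabla\phi_2^*(p_0)$ lies in $\{f = c_0\}$ by the displayed identity. Iterating this set-valued propagation, using that the common conjugate sets cover all of $\partial\Delta_\lambda^\vee$ (since $\phi_i = \phi_i^{**}$), together with continuity of $f$ and connectedness of the polytope boundary, one argues that $\{f = c_0\}$ is both open and closed and hence exhausts $\partial\Delta_\lambda^\vee$, yielding $\phi_1 \equiv \phi_2 + c_0$. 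The genuine difficulty is verifying that this propagation generates an open set around each point $x_0$ despite the absence of a smooth global chart; this is where the polyhedral structure of conjugate sets and the machinery of the previous subsection must be used delicately.
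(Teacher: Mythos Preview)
Your convexity-and-midpoint argument matches the paper's proof, and you correctly arrive at the pointwise identity $\phi_{1/2}^*(p) = \tfrac{1}{2}(\phi_1^*(p) + \phi_2^*(p))$ together with the existence of common conjugate points. The divergence---and the gap you yourself flag---is the final step. Your propagation-via-conjugate-sets scheme is left incomplete, and it is genuinely unclear that iterating common conjugate sets ever produces an \emph{open} neighbourhood of $x_0$ inside $\{f = c_0\}$: conjugate sets are typically low-dimensional, and nothing in the preceding subsection supplies the openness you need.

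The paper bypasses this entirely by localising to classical convex analysis. Once one knows that any gradient $p \in \nabla\phi'(x)$ is simultaneously in $\nabla\phi_1(x)$ and $\nabla\phi_2(x)$, the comparison of gradient notions in section~\ref{conjugategradient} identifies $\nabla\phi_i$ on each open face $\text{Int}(\Delta_m^\vee)$ with the classical gradient of a Lipschitz convex function on a Euclidean domain, valued in $M_\R/\R m$. Thus $\phi_1$ and $\phi_2$ have equal classical gradients Lebesgue-a.e.\ on each open face, which forces $\phi_1 - \phi_2$ to be constant there (an elementary fact about convex functions, \cf \cite[Lem.~5.3]{Hultgren}). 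Continuity of $\phi_1 - \phi_2$ across the lower-dimensional strata then forces all these constants to coincide. This reduction to face-by-face classical analysis is precisely what your global propagation argument is trying to replace, and it is both simpler and complete.
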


\begin{proof}
(compare \cite[Thm 5.2]{Hultgren})	Suppose $\tilde{\phi}$ is another minimizer, and we consider $\phi'=\frac{1}{2}(\phi+\tilde{\phi})$, which still lies in $\mathcal{P}$ by the convexity of the function class $\mathcal{P}$. Now for any $p\in \partial\Delta_\mu$,
\[
\begin{split}
& \phi'^{*}(p)=\max \langle x, p\rangle- \phi'(x)
\\
\leq  &\frac{1}{2}\max (\langle x, p\rangle- \phi(x))+\frac{1}{2}\max (\langle x, p\rangle- \tilde{\phi}(x))= \frac{1}{2}(\phi^*(p)+ \tilde{\phi}^*(p)),
\end{split}
\]
whence $\mathcal{F}(\phi') \leq \frac{1}{2}(\mathcal{F}(\phi)+ \mathcal{F}(\tilde{\phi} ) )$. This forces $\phi'$ to be also a minimizer, and the equality is achieved. If $p\in \nabla \phi'(x)$, then  the equality $\phi'^{*}(p)= \frac{1}{2}(\phi^*(p)+\tilde{\phi}^*(p))$ implies
\[
(\phi(x)+ \phi^*(p) -\langle x,p\rangle) + (\tilde{\phi}(x) +\tilde{\phi}^*(p)-\langle x,p\rangle)=0,
\]
which forces $p\in \nabla \phi(x)$ and $p\in \nabla \tilde{\phi}(x)$.

When we restrict to any open face $\text{Int}(\Delta_m^\vee)$ of $\partial \Delta_\lambda^\vee$, the functions $\phi, \tilde{\phi}$ are convex functions on convex domains, and $\nabla\phi(x), \nabla\tilde{\phi}(x)$ are identified with the classical gradients of convex functions, which naturally lie inside the quotient space $M_\R/\R m$ (\cf section \ref{conjugategradient}). Since $\nabla \phi= \nabla \tilde{\phi}$ Lebesgue-a.e, the difference $\phi-\tilde{\phi}$ is a constant on the face (\cf \cite[Lem 5.3]{Hultgren}). By the continuity of the functions, and matching the functions on the intersection of different faces, we see that the constant must be independent of the face.
\end{proof}

\subsection{Variational inequality}

We now derive a fundamental \emph{variational inequality for the minimizer}.

\begin{prop}\label{variationalinequality}
For any measurable $E\subset \partial \Delta_\lambda^\vee$, we have
$
|\bar{\nabla}\phi(E)  |\geq |E|.
$
Completely analogously $
|\bar{\nabla}\phi^*(E)  |\geq |E|
$
for any measurable subset $E\subset \partial\Delta_\mu$. 
\end{prop}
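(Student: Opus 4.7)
My plan is to extract the inequality from the first-order optimality of $\phi$ by perturbing in the direction of the indicator function of $E$. First, by the inner regularity of $d\mathcal{L}^\vee$ on the compact polyhedral complex $\partial\Delta_\lambda^\vee$, together with the obvious monotonicity $\bar\nabla\phi(K)\subseteq \bar\nabla\phi(E)$ for $K\subseteq E$, it suffices to prove the inequality for $E$ compact. For such $E$ and small $s>0$, I would form the bounded (but generally not $\mathcal{P}$-valued) function $\phi_s := \phi - s\chi_E \in L^\infty(\partial\Delta_\lambda^\vee)$. Its double Legendre transform $\phi_s^{**}$ lies in $\mathcal{P}$ by Lemma \ref{InvolutionLegendre}, and I will freely use the standard identity $(\phi_s^{**})^* = \phi_s^*$ (the Legendre transform only sees the convex envelope of its input).

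Since $\phi_s^{**}\le \phi_s$ pointwise by Lemma \ref{InvolutionLegendre}, the minimality of $\phi$ yields
\[
\mathcal{F}(\phi) \le \mathcal{F}(\phi_s^{**}) = \int \phi_s^{**}\, d\mathcal{L}^\vee + \int \phi_s^*\, d\mathcal{L} \le \int \phi\, d\mathcal{L}^\vee - s|E| + \int \phi_s^*\, d\mathcal{L},
\]
which rearranges to the global variational bound $\int (\phi_s^* - \phi^*)\, d\mathcal{L} \ge s|E|$. Next, I would analyze $\phi_s^*(p) - \phi^*(p)$ pointwise. The uniform upper bound $\phi_s^* \le \phi^* + s$ is immediate from $\phi_s \ge \phi - s$ and Lemma \ref{MonotonicityLegendre}. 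For $p \in \bar\nabla\phi(E)$, the existence of a genuine conjugate point $x\in E$ forces $\phi_s^*(p) \ge \phi^*(p) + s$, so equality holds. For $p$ outside $\bar\nabla\phi(E)$, I expect the identity
\[
\phi_s^*(p) - \phi^*(p) = \max\bigl(s - \delta(p),\, 0\bigr), \qquad \delta(p) := \phi^*(p) - \sup_{x\in E}\bigl[\langle x,p\rangle - \phi(x)\bigr] > 0,
\]
where positivity of $\delta(p)$ relies on compactness of $E$ to ensure the supremum is attained at some $x_0\in E$ which, because $p \notin \bar\nabla\phi(E)$, cannot be a conjugate of $p$.

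Finally, integrating and sending $s \to 0^+$, the contribution over $\bar\nabla\phi(E)$ is exactly $s|\bar\nabla\phi(E)|$, while the complementary contribution is dominated by $s\cdot |\{p\notin \bar\nabla\phi(E) : \delta(p) < s\}| = o(s)$ by dominated convergence, since $\delta > 0$ on that set. Combining with $\int(\phi_s^* - \phi^*)\,d\mathcal{L} \ge s|E|$, dividing by $s$ and passing to the limit gives $|\bar\nabla\phi(E)| \ge |E|$; the Legendre-dual statement then follows from the fully symmetric role of $\phi$ and $\phi^*$ in $\mathcal{F}$. The main obstacle is precisely the strict positivity $\delta(p)>0$ off $\bar\nabla\phi(E)$: for non-compact $E$ the supremum in $\delta$ need not be attained, and $E$ could cluster onto $\bar\nabla\phi^*(p)$ without meeting it, so $\delta$ may vanish on a larger set and one would only recover $|\bar\nabla\phi(\overline{E})|\ge |E|$; the inner-regularity reduction at the outset is tailored to avoid precisely this pathology.
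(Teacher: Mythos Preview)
Your proposal is correct and follows essentially the same argument as the paper: perturb the minimizer by $-s\chi_E$, use the double Legendre transform as a competitor to obtain $\int(\phi_s^*-\phi^*)\,d\mathcal{L}\ge s|E|$, analyze pointwise where $\phi_s^*$ exceeds $\phi^*$, and pass to the limit $s\to 0$, with inner regularity handling general measurable $E$. The only cosmetic differences are that the paper postpones the compact reduction to the end and bounds $\int(\phi_s^*-\phi^*)$ above by $s\,|\{p:\delta(p)\le s\}|$ without writing the exact formula $\phi_s^*-\phi^*=\max(s-\delta,0)$, whereas you compute this formula explicitly and split the integral over $\bar\nabla\phi(E)$ and its complement.
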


\begin{proof}
We consider the function $\phi-t1_E$ for $0<t\ll 1$, where $1_E$ denotes the characteristic function of $E$. By Lemma \ref{MonotonicityLegendre},
\[
\phi^*\leq (\phi-t1_E)^*\leq \phi^*+t.
\]
Morever $(\phi-t1_E)^*>\phi^*$ at $p\in \partial\Delta_\mu$, only when 
\[
\sup_{x\in E}( \langle p, x\rangle -\phi(x) )\geq \phi^*(p)-t.
\]
Hence
\[
\begin{split}
t |\{ p: \sup_{x\in E} ( \langle p, x\rangle -\phi(x)  ) \geq \phi^*(p)-t   \} |
\geq  \int_{\partial\Delta_\mu} ((\phi-t1_E)^*-\phi^*)  d\mathcal{L}.
\end{split}
\]

Since $\phi$ is a minimizer, plugging in $(\phi-t1_E)^{**}$ as a competitor (the caveat being that $(\phi-t1_E)$ may not lie in $\mathcal{P}$), we get
\[
\int_{\partial \Delta_\mu} (\phi-t1_E)^*-\phi^* \geq - \int_{\partial \Delta_\lambda^\vee} (\phi-t1_E)^{**}- \phi \geq - \int_{\partial \Delta_\lambda^\vee} (\phi-t1_E)- \phi=t|E|.
\]
Here the second inequality uses Lemma \ref{InvolutionLegendre}. Combining the above and cancelling the $t$ factor, 
\[
|\{ p: \sup_{x\in E} ( \langle p, x\rangle -\phi(x)  ) \geq \phi^*(p)-t   \} | \geq |E|,\quad \forall 0<t\ll 1. 
\]
Taking the $t\to 0$ limit, and recalling $\phi^*(p)\geq \langle x,p\rangle-\phi(x)$, we get
\[
|\{ p: \sup_{x\in E} ( \langle p, x\rangle -\phi(x)  ) = \phi^*(p)   \} | \geq |E|.
\]

For closed subsets $E$, the sup can be replaced by max by the continuity of $\phi$, so the LHS subset is the conjugate set  $\bar{\nabla}\phi(E)$. For general Borel subsets $E$, we take a compact exhaustion of $E$. For any compact $K\subset E$, we observe
\[
|\bar{\nabla}\phi(E) |\geq |\bar{\nabla} \phi(K)|\geq |K|.
\]
Since the Lebesgue measure is inner regular, we obtain $|\bar{\nabla}\phi(E)| \geq |E| $ by taking the limit $K\uparrow E$.
\end{proof}

\begin{cor}\label{variationalinequalitycor}
If no point in the measurable subset $E\subset \partial \Delta_\lambda^\vee$ has any anomalous conjugate point, then $|\nabla \phi(E)|\geq |E|$.
\end{cor}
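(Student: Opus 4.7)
The plan is to reduce the statement to Proposition \ref{variationalinequality} by unpacking the definitions. Recall that the gradient set at a single point was defined as $\nabla\phi(x)=\bar{\nabla}\phi(x)\cap\bigcap_m\{H_m^+ : x\in\Delta_m^\vee\}\subset\bar{\nabla}\phi(x)$, and that $x$ is declared to have no anomalous conjugate point precisely when this inclusion is an equality, i.e.\ $\bar{\nabla}\phi(x)=\nabla\phi(x)$.

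First I would rewrite the subset versions as unions over points: by the definition quoted right after the definition of $\bar{\nabla}\phi(x)$ (and the analogous definition for $\nabla\phi(E)$), one has
\[
\bar{\nabla}\phi(E)=\bigcup_{x\in E}\bar{\nabla}\phi(x),\qquad \nabla\phi(E)=\bigcup_{x\in E}\nabla\phi(x).
\]
Second, the hypothesis that no $x\in E$ has an anomalous conjugate point means, term by term in the first union, $\bar{\nabla}\phi(x)=\nabla\phi(x)$, so the two unions coincide and $\bar{\nabla}\phi(E)=\nabla\phi(E)$.

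Finally, applying Proposition \ref{variationalinequality} to $E$ gives $|\nabla\phi(E)|=|\bar{\nabla}\phi(E)|\geq |E|$, which is the desired inequality. There is no genuine obstacle here: the result is a direct corollary, and the only thing to be slightly careful about is that $\nabla\phi(E)$ is measurable, which follows because it equals the measurable set $\bar{\nabla}\phi(E)$ appearing in Proposition \ref{variationalinequality}.
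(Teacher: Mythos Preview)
Your proof is correct and follows exactly the same approach as the paper's own proof, which simply observes that the absence of anomalous conjugate points gives $\nabla\phi(E)=\bar{\nabla}\phi(E)$ and then applies Proposition~\ref{variationalinequality}. You have merely spelled out in more detail the definitional unpacking that the paper compresses into one sentence.
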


\begin{proof}
If there is no anomalous conjugate point, then $\nabla \phi(E)= \bar{\nabla}\phi(E)$. We then apply the variational inequality.
\end{proof}

\begin{rmk}
We can now explain the heuristic why the variational problem should be related to the \emph{real MA equation}. In the ideal situation, there is no anomalous conjugate point, and $\nabla \phi, \nabla \phi^*$ are both bijective on points, and define mutually inverse maps. Then \[
|\nabla \phi(E)|\geq |E|= |\nabla\phi^*(\nabla \phi (E))|\geq |\nabla \phi(E)|.
\]
This forces equality everywhere. On each open face of $\partial \Delta_\lambda^\vee$ the gradient agrees with the classical notion in convex function theory, and $|\nabla \phi(E)|=|E|$ is just the weak formulation of the real MA equation. To make this argument work more rigorously, one needs to control the anomalous conjugate points, and the multivalued nature of gradient.
\end{rmk}

\subsection{Anomalous conjugate points}

The remaining technical difficulty mainly comes from the possible existence of anomalous conjugate points.

We introduce the subset $\mathcal{S}^\vee\subset \partial \Delta_\lambda^\vee$ as $\mathcal{S}^\vee=\mathcal{S}^\vee_1\cup \mathcal{S}^\vee_2$, where
\[
\mathcal{S}^\vee_1= \bigcup\{ \text{lower dim faces of $\partial \Delta_\lambda^\vee$}  \},
\]
\[
\mathcal{S}^\vee_2=\bigcup_{m} \{ x| \bar{\nabla} \phi(x)\cap H_m^+ \text{ contains at least two points}        \}.
\]
In particular $\nabla\phi(x)$ is single valued outside $\mathcal{S}^\vee$. Similarly the reader may write down the Legendre dual analogue $\mathcal{S}$. We now recall a basic fact in classical convex function theory.

\begin{lem}\label{nomultiplegradient}
The subset $\mathcal{S}^\vee\subset \partial \Delta_\lambda^\vee$ 
 has Lebesgue measure zero. A similar statement holds for $\mathcal{S}\subset \partial \Delta_\mu$.
\end{lem}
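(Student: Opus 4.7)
My plan is to split $\mathcal{S}^\vee=\mathcal{S}^\vee_1\cup \mathcal{S}^\vee_2$ and handle each piece separately. The piece $\mathcal{S}^\vee_1$ is a finite union of faces of $\partial\Delta_\lambda^\vee$ of dimension at most $d-1$, inside a $d$-dimensional boundary; since $d\mathcal{L}^\vee$ is a constant multiple of the induced Lebesgue measure on each top-dimensional face, lower-dimensional subsets are automatically null, so $|\mathcal{S}^\vee_1|=0$ is immediate. The substantive content lies in $\mathcal{S}^\vee_2$, which I would show is null by restricting to each open top-dimensional face $\text{Int}(\Delta_m^\vee)$ and proving null there; the global statement then follows because the complement of the union of these open faces in $\partial\Delta_\lambda^\vee$ is already contained in $\mathcal{S}^\vee_1$, and there are only finitely many vertices $m$.

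On $\text{Int}(\Delta_m^\vee)$ a point $x$ lies on a unique top-dimensional face, so by the definitions $\nabla\phi(x)=\bar{\nabla}\phi(x)\cap H_m^+$. The Comparison of Gradient Notions proposition, combined with the identification $H_m^+\simeq \text{Image}(\Delta_\mu)\subset M_\R/\R m$ from Lemma \ref{Hm+-}, translates $\bar{\nabla}\phi(x)\cap H_m^+$ into the classical subdifferential of the convex function obtained by restricting $\phi$ to the open convex subset $\text{Int}(\Delta_m^\vee)$ of the affine hyperplane $\{\langle\cdot ,m\rangle=-\lambda(m)\}\subset N_\R$. At this point I would invoke the classical theorem that any convex function on an open convex subset of $\R^d$ is differentiable Lebesgue-almost-everywhere, equivalently that its subdifferential is single-valued outside a Lebesgue-null set. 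Applied to $\phi|_{\text{Int}(\Delta_m^\vee)}$, this says precisely that the set of $x$ in $\text{Int}(\Delta_m^\vee)$ where $\bar{\nabla}\phi(x)\cap H_m^+$ contains at least two points is null.

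Taking the finite union over top-dimensional faces yields $|\mathcal{S}^\vee_2|=0$, and hence $|\mathcal{S}^\vee|=0$. The dual statement $|\mathcal{S}|=0$ follows by the symmetry of the Legendre transform setup, swapping the roles of $(\phi,\Delta_\lambda^\vee)$ and $(\phi^*,\Delta_\mu)$. There is no real obstacle here: all the nontrivial content — the identification of polytope-gradients with classical convex-analytic gradients on each open face — has already been established by the Comparison proposition and Lemma \ref{Hm+-}, so this lemma is essentially a bookkeeping translation of a standard fact from convex analysis into the polytope formalism.
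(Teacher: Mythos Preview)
Your proposal is correct and matches the paper's own proof essentially line for line: restrict $\phi$ to each open face $\text{Int}(\Delta_m^\vee)$, identify $\bar{\nabla}\phi(x)\cap H_m^+$ with the classical subdifferential via the Comparison of Gradient Notions proposition, invoke almost-everywhere differentiability of convex (equivalently, Lipschitz) functions, and take the finite union over faces. The paper is somewhat terser (it does not separate out $\mathcal{S}^\vee_1$ explicitly and cites Lipschitz rather than convexity for the a.e.\ differentiability), but the substance is identical.
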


\begin{proof}
(\cf \cite[section 1.1.1]{Gutierez})
We restrict $\phi$ to the the open face $\text{Int}(\Delta_m^\vee)$, to obtain a classical convex function on a convex domain. As explained in section \ref{conjugategradient}, the notion of gradient inside $H_m^+$ is naturally identified with the classical notion which takes value in $M_\R/\R m$. Since our convex functions are Lipschitz, they are differentiable Lebesgue-a.e.
The multi-valued gradient problem happens on the non-differentiability locus, which has measure zero. Now the global statement follows by taking the union of all the faces.
\end{proof}

The following lemmas say that the anomalous conjugate has controlled effect on the gradient set. In this section, let $E$ be a measurable subset of $\Delta_m^\vee$, such that any $x\in E$ admits some anomalous conjugate point in $H_m^-$. Let $F'= \bar{\nabla}\phi(E)\cap H_m^-$, and let $F\subset H_m^+$ be the image of $F'$ under the projection $H_m^-\to H_m^+$ (\cf section \ref{conjugategradient}). By the proof of Lemma \ref{gradientexistencelem}, we have $F\subset \bar{\nabla}\phi(E)\cap H_m^+$.

\begin{lem}\label{lemmaF}
 $|F|\leq |E|$. 

\end{lem}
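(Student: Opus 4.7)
The plan is to apply the Legendre dual form of the variational inequality (Proposition~\ref{variationalinequality}) to $F \subset \partial\Delta_\mu$, obtaining $|\bar{\nabla}\phi^*(F)| \geq |F|$, and then to argue that $\bar{\nabla}\phi^*(F) \subset E$ up to a null set. Together these give $|F| \leq |\bar{\nabla}\phi^*(F)| \leq |E|$.

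After discarding null subsets, I may assume $E \subset \text{Int}(\Delta_m^\vee)$ and that every $p \in F$ lies outside the Legendre dual bad set $\mathcal{S}$ from Lemma~\ref{nomultiplegradient}. For such $p$, write $p = p' + sm$ with $s > 0$ and $p' \in F' \subset H_m^-$; Remark~\ref{anomalousrigidityrmk} then forces $\bar{\nabla}\phi^*(p) \subset \Delta_m^\vee$. A short direct computation, reusing the key identity $\phi^*(p) = \phi^*(p') - s\lambda(m)$ established in the proof of Lemma~\ref{gradientexistencelem} together with $\langle y, m\rangle = -\lambda(m)$ for $y \in \Delta_m^\vee$, upgrades this rigidity to the two-sided equality
\[
\bar{\nabla}\phi^*(p) = \bar{\nabla}\phi^*(p') \cap \Delta_m^\vee.
\]

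By the very definition of $F'$ there exists some $x_0 \in E$ with $p' \in \bar{\nabla}\phi(x_0)$, equivalently $x_0 \in \bar{\nabla}\phi^*(p') \cap \Delta_m^\vee = \bar{\nabla}\phi^*(p)$. To conclude $\bar{\nabla}\phi^*(p) = \{x_0\}$ I observe that $p \in \Delta_n$ for some vertex $n$ with $\langle n, m\rangle = 1$ (since $p \in H_m^+$), hence $\Delta_m^\vee \subset H_n^+$, so the rigidity inclusion reads $\bar{\nabla}\phi^*(p) = \bar{\nabla}\phi^*(p) \cap H_n^+$; for $p \notin \mathcal{S}$ the right-hand side is a single point. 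Therefore the unique element of $\bar{\nabla}\phi^*(p)$ must be $x_0 \in E$. Letting $p$ vary over $F$ yields $\bar{\nabla}\phi^*(F) \subset E$ modulo null, and the lemma follows.

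The delicate point — which I expect to be the main obstacle — is precisely why the classical a.e.\ single-valuedness from Lemma~\ref{nomultiplegradient}, which a priori only controls the piece $\bar{\nabla}\phi^*(p) \cap H_n^+$, suffices to pin down the full conjugate set $\bar{\nabla}\phi^*(p)$ for $p \in F$. This upgrade is made possible by Corollary~\ref{anomalousrigidity}, which rules out any anomalous conjugates outside $H_n^+$ for these particular $p$, and is what allows the measure bound to close up cleanly.
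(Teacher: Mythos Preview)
Your proof is correct and follows essentially the same route as the paper: establish $\bar{\nabla}\phi^*(F\setminus\mathcal{S})\subset E$ via the rigidity in Corollary~\ref{anomalousrigidity}, then invoke the dual variational inequality to get $|F|=|F\setminus\mathcal{S}|\leq|\bar{\nabla}\phi^*(F\setminus\mathcal{S})|\leq|E|$. The two-sided identity $\bar{\nabla}\phi^*(p)=\bar{\nabla}\phi^*(p')\cap\Delta_m^\vee$ is a small detour---the paper instead uses directly that $F\subset\bar{\nabla}\phi(E)$ to locate the unique conjugate inside $E$---and your reduction to $E\subset\text{Int}(\Delta_m^\vee)$ is not quite clean (shrinking $E$ could shrink $F$), but is also unnecessary since Corollary~\ref{anomalousrigidity} already covers boundary points.
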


\begin{proof}
By construction every $p\in F\setminus \mathcal{S}$ is of the form $p=p'+sm$ with $s>0$ and $p'\in F'$. By Cor. \ref{anomalousrigidity} concerning the anomalous conjugate points, we must have $\bar{\nabla}\phi^*(p)\subset \Delta_m^\vee$. Since $p\in H_m^+\setminus \mathcal{S}$, it lies on only one face $\Delta_n\subset H_m^+$ with $\langle n,m\rangle=1$, whence $\Delta_m^\vee\subset H_n^+$. We then have
$
\bar{\nabla}\phi^*(p)\subset \Delta_m^\vee\subset H_n^+,
$
whence
\[
\bar{\nabla}\phi^*(p)= \bar{\nabla}\phi^*(p)\cap H_n^+ = \nabla \phi^*(p).
\]
Since $p\notin \mathcal{S}$, we know that $\nabla \phi^*(p)$ consists of only one point, so in fact $\nabla \phi^*(p)=\{x\}$. But $p\in F\subset  \bar{\nabla}\phi(E)$, so $x\in E$ and  $\bar{\nabla}\phi^*(p)\subset E$.

In summary we get the inclusion
$
\bar{\nabla}\phi^*(F\setminus \mathcal{S}) \subset E.
$
Now applying the variational inequality Prop. \ref{variationalinequality}, 
\[
|E|\geq |\bar{\nabla}\phi^*(F\setminus \mathcal{S})|\geq |F\setminus \mathcal{S}|= |F|.
\]
The last equality uses that $\mathcal{S}$ has zero measure, by Lemma \ref{nomultiplegradient}.
\end{proof}

\begin{lem}\label{lemmaF'}
 $|F'|\leq |F|$. When the equality is achieved, then $F'$ has zero measure except perhaps on the faces $\Delta_n$ with $\langle n,m\rangle=-1$.

\end{lem}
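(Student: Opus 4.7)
The plan is to reduce the comparison of $|F|$ and $|F'|$ to a single Jacobian computation for the linear projection $\pi\colon M_\R \to M_\R/\R m$. By Lemma \ref{Hm+-}, $\pi$ restricts to homeomorphisms $H_m^\pm \to \text{Image}(\Delta_\mu)$, and by the very definition of $F$ as the $\pi$-projection of $F'$ read back into $H_m^+$, one has $\pi(F)=\pi(F')$ inside $M_\R/\R m$. So everything boils down to comparing the intrinsic measures of $F\subset H_m^+$ and $F'\subset H_m^-$ to the common measure of $\pi(F)=\pi(F')$ in the quotient.

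The heart of the argument will be to show that $\pi$ restricted to an individual facet $\Delta_n$ has constant Jacobian $|\langle m,n\rangle|$ with respect to the canonical Lebesgue measures coming from the integral structure. To verify this, I would pick a lattice basis $v_1,\ldots,v_d$ of $M\cap n^\perp$ (available because the vertex $n$ of the reflexive polytope $\Delta^\vee$ is primitive) defining the canonical measure on the affine hyperplane of $\Delta_n$, and extend by any $w\in M$ with $\langle w,n\rangle=1$ to a basis of $M$. Separately, choose a lattice splitting $M=\Z m\oplus\bigoplus_{j=1}^d \Z e_j$, so that $\pi(e_1),\ldots,\pi(e_d)$ is a basis of $M/\Z m$. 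The desired Jacobian is then $|\det B|$, where $B$ is the matrix of $\pi|_{n^\perp}$ in the bases $(v_i)$ and $(\pi(e_j))$. To identify $|\det B|$ with $|\langle m,n\rangle|$ I would compute the lattice determinant $\det(v_1,\ldots,v_d,m)$ two different ways: expanding in the basis $(e_1,\ldots,e_d,m)$ gives $\pm\det B$ (the last column is $(0,\ldots,0,1)^T$), while rewriting $m=\langle m,n\rangle w+\sum c_i v_i$ in the basis $(v_1,\ldots,v_d,w)$ gives $\pm\langle m,n\rangle$.

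With the Jacobian in hand, the lemma follows by bookkeeping. Facets of $H_m^+$ contribute Jacobian $1$, so $|\pi(F)|_{M_\R/\R m}=|F|$. Facets of $H_m^-$ satisfy $|\langle m,n\rangle|\geq 1$, so by the injectivity of $\pi|_{H_m^-}$,
\[
|F|=|\pi(F')|_{M_\R/\R m}=\sum_{n:\langle n,m\rangle\leq -1} |\langle m,n\rangle|\cdot |F'\cap \Delta_n| \geq \sum_n |F'\cap \Delta_n|=|F'|.
\]
The probability normalization of $d\mathcal{L}$ is immaterial since all pieces carry the same normalization constant. Equality forces $|\langle m,n\rangle|=1$ almost everywhere on $F'$, i.e. $F'$ is supported on the facets $\Delta_n$ with $\langle n,m\rangle=-1$, as claimed.

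The main obstacle is the Jacobian computation itself, but I expect it to be a routine piece of lattice linear algebra using only that $m$ and $n$ are primitive (both being vertices of reflexive polytopes); everything else is keeping track of measures across the common identification in $M_\R/\R m$.
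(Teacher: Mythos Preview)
Your proposal is correct and follows essentially the same route as the paper: both arguments reduce to the fact that the quotient map $\pi\colon \Delta_n\to M_\R/\R m$ has Jacobian $|\langle m,n\rangle|$ with respect to the integral Lebesgue measures, then compare the contributions from $H_m^+$ (Jacobian $1$) and $H_m^-$ (Jacobian $\geq 1$). The only difference is that the paper simply asserts the Jacobian value, whereas you supply the lattice-linear-algebra verification via the two computations of $\det(v_1,\ldots,v_d,m)$; this added detail is correct and fills in what the paper leaves to the reader.
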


\begin{proof}
By Lemma \ref{Hm+-}, under the quotient map $M_\R\to M_\R/\R m$, the sets $H_m^+$ and $H_m^-$ each project homeomorphically to the image of $\Delta_\mu$. 
The projection map is piecewise affine, and we focus on some face $\Delta_n$. 
With respect to the natural Lebesgue measures $dp$ induced by the integral structure, the quotient map $\Delta_{n}\to M_\R/\R m$ has Jacobian factor $|\langle n, m\rangle|$. For $\Delta_n\subset H_m^+$ this factor is equal to one, and for $\Delta_n\subset H_m^-$ this factor is a positive integer (\cf Lemma \ref{Hm+-}). Thus under the projection $H_m^-\to H_m^+$, the Jacobian factor is $\geq 1$. Since the normalized measure $d\mathcal{L}$ is a constant multiple of $dp$, the Jacobian factor is still $\geq 1$ with respect to $d\mathcal{L}$. This shows $|F'|\leq |F|$. The equality case forces the Jacobian factor to be equal to one, which means $|\langle n,m\rangle|=1$.
\end{proof}

\begin{prop}\label{anomaloustypeI1}
$|E|=|F|=|F'|$. In particular the equality forces that $F'$ has zero measure except perhaps on the faces $\Delta_n$ with $\langle n,m\rangle=-1$.

\end{prop}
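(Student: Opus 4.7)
By Lemmas \ref{lemmaF} and \ref{lemmaF'} we already have $|F'|\leq |F|\leq |E|$, so the remaining task is to establish the reverse inequalities, which will force all three measures to coincide. My plan is to apply the variational inequality (Prop.~\ref{variationalinequality}) to $E$ in the primal direction and to $F$ (and $F\cup F'$) in the Legendre-dual direction, and to match image sets precisely using the rigidity afforded by Cor.~\ref{anomalousrigidity} and assumption \eqref{extraassumption}.

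First, I would apply Prop.~\ref{variationalinequality} to $E$ to get $|\bar\nabla\phi(E)|\geq |E|$. Assumption \eqref{extraassumption} forces $\langle n,m\rangle\in\{1\}\cup\{\leq -1\}$ for every vertex pair, so every face $\Delta_n\subset\partial\Delta_\mu$ lies in $H_m^+$ or $H_m^-$, and hence $\bar\nabla\phi(E)=G\sqcup F'$ modulo lower-dimensional strata, where $G:=\bar\nabla\phi(E)\cap H_m^+$. The key identification is $|G|=|F|$ mod measure zero: the inclusion $F\subset G$ is noted before Lemma~\ref{lemmaF}, while for $x\in E\setminus\mathcal{S}^\vee$ the set $\bar\nabla\phi(x)\cap H_m^+$ is a singleton by the definition of $\mathcal{S}_2^\vee$, and by Lemma~\ref{gradientexistencelem} this singleton equals $\pi(p_0)\in F$ for any anomalous conjugate $p_0\in\bar\nabla\phi(x)\cap H_m^-$; the residual contribution from $x\in E\cap\mathcal{S}^\vee$ is negligible because $|\mathcal{S}^\vee|=0$ (Lemma~\ref{nomultiplegradient}) and the subdifferential of a Lipschitz convex function carries null sets to null sets. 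This gives $|F|+|F'|\geq |E|$.

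Next, I would apply the Legendre-dual variational inequality to $F$ on $\partial\Delta_\mu$: $|\bar\nabla\phi^*(F)|\geq |F|$. The containment $\bar\nabla\phi^*(F\setminus\mathcal{S})\subset E$ from the proof of Lemma~\ref{lemmaF} (which invoked Cor.~\ref{anomalousrigidity}), combined with the reverse inclusion $E\subset\bar\nabla\phi^*(F)$ arising from the gradient $g(x)\in F\cap\bar\nabla\phi(x)$, yields $\bar\nabla\phi^*(F)=E$ up to measure zero. Then applying the dual variational inequality to the disjoint union $F\sqcup F'$ and using inclusion-exclusion with $\bar\nabla\phi^*(F)\cap\bar\nabla\phi^*(F')\supset E$ (each $x\in E$ has an anomalous conjugate $p_0\in F'\cap\bar\nabla\phi(x)$) produces
\[
|\bar\nabla\phi^*(F')|\;\geq\;|F|+|F'|.
\]

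The main obstacle will be to match this with a compatible upper bound on $|\bar\nabla\phi^*(F')|$: for $p_0\in F'\setminus\mathcal{S}$ the Legendre-dual version of Cor.~\ref{anomalousrigidity} constrains $\bar\nabla\phi^*(p_0)$ to decompose into its gradient part in $H_n^+$ (controlled by the dual Jacobian analysis of Lemma~\ref{lemmaF'}) and an anomalous part inside $\Delta_m^\vee\subset H_n^-$ (which must lie inside $E$). Combining these bounds with the chain $|F'|\leq |F|\leq |E|\leq |F|+|F'|$ and the identity $\bar\nabla\phi^*(F)=E$ closes the loop and forces $|E|=|F|=|F'|$. The ``in particular'' clause is then immediate from the equality case of Lemma~\ref{lemmaF'}, since $|F'|=|F|$ saturates the Jacobian bound exactly on the faces with $|\langle n,m\rangle|=1$.
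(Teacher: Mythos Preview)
There is a genuine gap. Your Step~3 (and similarly Step~B) rests on the assertion that ``the subdifferential of a Lipschitz convex function carries null sets to null sets.'' This is false: take $\phi(x)=\int_0^x c(t)\,dt$ on $\R$ with $c$ the Cantor function; then $\phi$ is $C^1$, Lipschitz, convex, and $\partial\phi(C)=c(C)$ has full measure in $[0,1]$ even though the Cantor set $C$ is null. In the polytope setting this means you cannot conclude $|\bar\nabla\phi(E\cap\mathcal{S}^\vee)\cap H_m^+|=0$ from $|\mathcal{S}^\vee|=0$ alone, and hence you do not get $|G|=|F|$. (The later Prop.~\ref{gradientdecreasesize} would give what you want, but it is proved \emph{after} Prop.~\ref{anomaloustypeI1} and in fact uses it.) The same issue undermines your claim $\bar\nabla\phi^*(F)=E$ up to measure zero: you only know $\bar\nabla\phi^*(F\setminus\mathcal{S})\subset E$, not that $\bar\nabla\phi^*(F\cap\mathcal{S})$ is null.

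Your Step~D is also not an argument but a hope. The anomalous conjugates of a point $p_0\in F'\cap\Delta_n$ need not lie in $\Delta_m^\vee$, let alone in $E$; they could land in any face $\Delta_{m'}^\vee$ with $\langle m',n\rangle\leq -1$. And even granting Steps~A--D, the chain $|F'|\leq|F|\leq|E|\leq|F|+|F'|$ does not force the three to be equal (e.g.\ $|F'|=0$, $|F|=|E|=1$). The paper's proof avoids all of this by a single clean move: it writes $H_m^-=\bigcup_i\Delta_{n_i}$, observes $E\subset\bigcup_i\bigl(\bar\nabla\phi^*(F'\cap\Delta_{n_i})\cap H_{n_i}^-\bigr)$ since $\Delta_m^\vee\subset H_{n_i}^-$, and then applies the Legendre-dual versions of Lemmas~\ref{lemmaF} and~\ref{lemmaF'} to each $F'\cap\Delta_{n_i}$ to get $|\bar\nabla\phi^*(F'\cap\Delta_{n_i})\cap H_{n_i}^-|\leq|F'\cap\Delta_{n_i}|$; summing yields $|E|\leq|F'|$ directly. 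Note this does not invoke assumption~\eqref{extraassumption} and never needs to control images of null sets.
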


\begin{proof}
By Lemma \ref{lemmaF}, \ref{lemmaF'},
we already know $|E|\geq |F|\geq |F'|$, so it suffices to prove $|F'|\geq |E|$. We write  $H_m^-=\bigcup \Delta_{n_i}$. For each $i$, we have the set inclusion $\Delta_m^\vee\subset H_{n_i}^-$, and since any point of $E$ is conjugate to some point in $H_m^-\cap \bar{\nabla}\phi(E)=F'$, we conclude
 \[
E\subset   \bar{\nabla}\phi^*(F' )\cap \Delta_m^\vee \subset \bigcup_i \bar{\nabla}\phi^*(F'\cap \Delta_{n_i} )\cap H_{n_i}^-,
 \]
whence
$
|E|\leq \sum_i | \bar{\nabla}\phi^*(F'\cap \Delta_{n_i} )\cap H_{n_i}^-|. 
$

Recall that our setup has the Legendre duality symmetry. For any fixed $i$, we now let $F'\cap \Delta_{n_i}$ play the role of $E$ in the Legendre dual version of Lemma \ref{lemmaF}, \ref{lemmaF'}. The role of $F'$ is then played by $\bar{\nabla} \phi^*(F'\cap \Delta_{n_i} )\cap H_{n_i}^-$, and we conclude
\[
| \bar{\nabla} \phi^*(F'\cap \Delta_{n_i} )\cap H_{n_i}^- |\leq |F'\cap \Delta_{n_i}|.
\] 
Summing over $i$ and
combining the above,
\[
|E|\leq \sum_i | \bar{\nabla}\phi^*(F'\cap \Delta_{n_i} )\cap H_{n_i}^-|\leq \sum_i |F'\cap \Delta_{n_i}|= |F'|.
\]
This is the promised reverse inequality.
\end{proof}

\begin{rmk}
The equality forces a very strong rigidity, and it is an interesting question whether it forces $|E|=0$ in fact.
\end{rmk}

\begin{cor}\label{anomoloustypeI2}
 $|E|=|\nabla\phi(E)\cap F|\leq |\nabla\phi(E)|$.	
\end{cor}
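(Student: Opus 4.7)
The plan is to leverage Proposition \ref{anomaloustypeI1}, which has already established $|E|=|F|=|F'|$. Since $\nabla\phi(E)\cap F\subset F$ immediately yields $|\nabla\phi(E)\cap F|\leq |F|=|E|$, the real content is the reverse inequality $|\nabla\phi(E)\cap F|\geq |E|$, i.e., that $F$ is contained in $\nabla\phi(E)$ up to a null set. The trivial inequality $|\nabla\phi(E)\cap F|\leq |\nabla\phi(E)|$ then completes the chain.

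First I would revisit what was established inside the proof of Lemma \ref{lemmaF}: for every $p\in F\setminus\mathcal{S}$ the conjugate set $\bar{\nabla}\phi^*(p)$ reduces to a single point $x_p\in E$, and by Corollary \ref{anomalousrigidity} one has $\bar{\nabla}\phi^*(p)\subset\Delta_m^\vee$. Provided $x_p$ additionally lies in the \emph{interior} of $\Delta_m^\vee$, the only face of $\partial\Delta_\lambda^\vee$ containing $x_p$ is $\Delta_m^\vee$ itself, so by the definition of the gradient set $\nabla\phi(x_p)=\bar{\nabla}\phi(x_p)\cap H_m^+$. Combined with $p\in H_m^+$ (since $F\subset H_m^+$) and $p\in\bar{\nabla}\phi(x_p)$ (the tautological Legendre-symmetric reformulation of $x_p\in\bar{\nabla}\phi^*(p)$), this gives $p\in\nabla\phi(x_p)\subset\nabla\phi(E)\cap F$.

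The remaining step, which I expect to be the main obstacle, is to discard the exceptional set of $p\in F\setminus\mathcal{S}$ whose companion $x_p$ happens to land on a lower-dimensional face. Let $B:=E\cap\mathcal{S}^\vee_1$, which is Lebesgue-negligible by Lemma \ref{nomultiplegradient}, and put $F_B:=\{p\in F\setminus\mathcal{S} : x_p\in B\}$. Then the single-valuedness from the previous step gives $\bar{\nabla}\phi^*(F_B)\subset B$, so $|\bar{\nabla}\phi^*(F_B)|=0$. Applying the Legendre-dual form of the variational inequality (Proposition \ref{variationalinequality}) to the measurable subset $F_B\subset\partial\Delta_\mu$ yields $|F_B|\leq |\bar{\nabla}\phi^*(F_B)|=0$. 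This is precisely the measure-theoretic leverage that allows one to ignore the boundary stratification of $\Delta_m^\vee$.

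Putting the pieces together, the set $F\setminus(\mathcal{S}\cup F_B)$ has full measure $|F|$ in $F$, and by the second paragraph every point of it belongs to $\nabla\phi(E)\cap F$. Therefore $|\nabla\phi(E)\cap F|\geq |F|=|E|$, which combined with the trivial upper bound $|\nabla\phi(E)\cap F|\leq |F|=|E|$ gives the equality $|E|=|\nabla\phi(E)\cap F|$; the inclusion $\nabla\phi(E)\cap F\subset\nabla\phi(E)$ then supplies the final inequality in the statement.
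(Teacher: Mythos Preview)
Your proof is correct and follows essentially the same strategy as the paper: both reduce to showing that $F\setminus\nabla\phi(E)$ is null modulo $\mathcal{S}$, using the fact (from the proof of Lemma~\ref{lemmaF}) that each $p\in F\setminus\mathcal{S}$ has $\bar{\nabla}\phi^*(p)=\{x_p\}\subset E$, and then disposing of those $p$ whose $x_p$ lands on a lower-dimensional face. The only difference is in this last step: the paper re-applies Proposition~\ref{anomaloustypeI1} to the null set $E\cap\mathcal{S}^\vee$ to bound the corresponding image in $F$, whereas you invoke the variational inequality (Proposition~\ref{variationalinequality}) directly on $F_B$ via $|F_B|\leq|\bar{\nabla}\phi^*(F_B)|\leq|B|=0$, which is slightly more economical and also lets you discard only $\mathcal{S}^\vee_1$ rather than all of $\mathcal{S}^\vee$.
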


\begin{proof}
Since  $|E|=|F|$ by Prop. \ref{anomaloustypeI1}, it suffices to prove $F\setminus \nabla\phi(E)\cup\mathcal{S} $ has measure zero.
Now every point in $p\in F\setminus \mathcal{S}$ has a unique gradient $x=\nabla\phi^*(p)\in E$, and if $x\notin \mathcal{S}^\vee$ then $p=\nabla \phi(x)$. This shows that 
\[
F\setminus \nabla\phi(E)\cup\mathcal{S} \subset \text{Image of }(\bar{\nabla}\phi (\mathcal{S}^\vee\cap E)\cap H_m^-) \text{ under } H_m^-\to H_m^+ .
\]
By applying Prop. \ref{anomaloustypeI1}
again to $E\cap \mathcal{S}^\vee$, we see
\[
|F\setminus \nabla\phi(E)\cup\mathcal{S}^\vee|\leq |\text{Image}(\bar{\nabla}\phi (\mathcal{S}^\vee\cap E)\cap H_m^- ) |
= |E\cap \mathcal{S}^\vee|=0,
\]
so the result follows.
\end{proof}

\subsection{Anomalous conjugate points II}

In this section we aim to prove 

\begin{prop}\label{gradientdecreasesize}
For any $E\subset \partial\Delta_{\lambda}^\vee$, we have $|\nabla \phi (E)|\leq |E|$.
\end{prop}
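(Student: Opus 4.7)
The plan is to derive the desired bound by applying the variational inequality (Prop.~\ref{variationalinequality}) and the anomalous-conjugate analysis (Prop.~\ref{anomaloustypeI1}, Cor.~\ref{anomoloustypeI2}) in the Legendre dual setting. I set $G := \nabla\phi(E) \subset \partial\Delta_\mu$; the goal is $|G| \leq |E|$. Modulo the null set $\mathcal{S}$ (Lemma~\ref{nomultiplegradient}), I partition $G = G^0 \sqcup G^a$, where $G^0$ consists of points $p$ with $\bar{\nabla}\phi^*(p) = \nabla\phi^*(p)$ and $G^a$ of points with some anomalous dual conjugate. Under assumption \eqref{extraassumption}, any anomalous dual conjugate of $p \in \text{Int}(\Delta_n)$ must lie in $H_n^-$: indeed $y \notin H_n^+$ means $y$ sits on a face $\Delta_m^\vee$ with $\langle m,n\rangle \neq 1$, and since $\langle m,n\rangle \in \mathbb{Z}_{\leq 1} \setminus \{0\}$ this forces $\langle m,n\rangle \leq -1$. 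Thus $G^a = \bigsqcup_n (G^a \cap \text{Int}(\Delta_n))$ and the dual version of Prop.~\ref{anomaloustypeI1} applies to each piece.

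For each piece I exhibit a subset of $E$ of the same measure. For the regular part $G^0$: if $p \in G^0 \setminus \mathcal{S}$ then $\nabla\phi^*(p)$ is a singleton $\{y_p\}$, and for any $x \in E$ with $p \in \nabla\phi(x)$, the tautology $x \in \bar{\nabla}\phi^*(p) = \nabla\phi^*(p)$ forces $y_p = x \in E$. Hence $\nabla\phi^*(G^0 \setminus \mathcal{S}) \subset E$, and the dual variational inequality gives $|\nabla\phi^*(G^0 \setminus \mathcal{S})| \geq |G^0|$. For the anomalous part on face $\Delta_n$, the dual of Cor.~\ref{anomoloustypeI2} produces $E^a_n := \nabla\phi^*(G^a \cap \text{Int}(\Delta_n)) \cap F_n$ of measure $|G^a \cap \text{Int}(\Delta_n)|$; the same tautology argument, now using $x \in \bar{\nabla}\phi^*(p) \cap H_n^+ = \nabla\phi^*(p) = \{y_p\}$ (here $p$ lies on the unique face $\Delta_n$), shows $y_p \in E$, so $E^a_n \subset E$.

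It remains to check that $\nabla\phi^*(G^0 \setminus \mathcal{S})$ and the various $E^a_n$ are pairwise disjoint modulo $\mathcal{S}^\vee$. For $y \notin \mathcal{S}^\vee$ the gradient $\nabla\phi(y)$ is a single point $p$ lying on a unique face $\Delta_n$, so $y$ belongs to at most one of these subsets. Summing,
\[ |\nabla\phi(E)| = |G^0| + \sum_n |G^a \cap \text{Int}(\Delta_n)| \leq \Bigl| \nabla\phi^*(G^0 \setminus \mathcal{S}) \cup \bigcup_n E^a_n \Bigr| \leq |E|. \]
The hard part will be bookkeeping the null sets $\mathcal{S}, \mathcal{S}^\vee$, and verifying cleanly that the Legendre dual versions of Prop.~\ref{anomaloustypeI1} and Cor.~\ref{anomoloustypeI2} really apply to each $G^a \cap \text{Int}(\Delta_n)$ --- i.e.\ that every such $p$ admits an anomalous conjugate on $H_n^-$ rather than on a neutral face, which is precisely where assumption \eqref{extraassumption} is used.
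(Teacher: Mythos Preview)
Your approach matches the paper's: partition $G = \nabla\phi(E)$ modulo $\mathcal{S}$ according to whether $p$ has an anomalous dual conjugate, apply the Legendre dual of Cor.~\ref{variationalinequalitycor} to the regular part and the dual of Cor.~\ref{anomoloustypeI2} to each anomalous piece $G^a \cap \text{Int}(\Delta_n)$, then check that $\nabla\phi^*$ sends everything back into $E$ with pairwise disjoint images modulo $\mathcal{S}^\vee$. (One detail you skip: the reason $x \in H_n^+$ when $p \in \nabla\phi(x)$ and $p \in \text{Int}(\Delta_n)$ is that $p \in H_m^+$ for every face $\Delta_m^\vee$ containing $x$, so $\langle m,n\rangle = 1$ and hence $\Delta_m^\vee \subset H_n^+$.) The paper runs the same scheme after first reducing to $E \subset \Delta_m^\vee$.

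The gap is that you invoke assumption~\eqref{extraassumption} to force every anomalous dual conjugate of $p \in \text{Int}(\Delta_n)$ into $H_n^-$, but Prop.~\ref{gradientdecreasesize} is stated without that hypothesis, and the subsequent structure theorems (Prop.~\ref{Borelmeasure}, Thm.~\ref{goodbaddecomposition}, Thm.~\ref{Legendreduality}) invoke it in the general setting where $\langle m,n\rangle = 0$ can occur --- indeed this is exactly the content of the ``bad set''. The paper therefore adds a third piece $G_1$ to the partition, consisting of those $p \in \nabla\phi(E) \cap \Delta_n \setminus \mathcal{S}$ admitting an anomalous conjugate in some $\Delta_{m'}^\vee$ with $\langle m',n\rangle = 0$, and shows $|G_1| = 0$ via Lemma~\ref{anomalousconjugatetypeII3}. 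Once $G_1$ is discarded, the remaining argument is precisely yours. So your proof is correct under~\eqref{extraassumption} and suffices for the headline application Thm.~\ref{SYZthm}, but it does not establish the proposition as stated.
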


\begin{lem}\label{anomalousconjugatetypeII1}
Assume the subset $E\subset \Delta_m^\vee$ has the property that any $x\in E$ admits some conjugate in a face $\Delta_n$ with $\langle m,n\rangle=0$. Then $|E\cap \nabla\phi^*(\partial \Delta_\mu)|=0$.

\end{lem}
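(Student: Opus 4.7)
The approach will be to derive a pointwise contradiction: for a.e.\ $x \in E$, the hypothesis $x \in \nabla\phi^*(\partial\Delta_\mu)$ forces the unique gradient at $x$ to sit on a face $\Delta_{n_q}$ that is incompatible with the definition of $\nabla\phi^*$. First I would discard $\mathcal{S}^\vee_1$ (the lower-dimensional faces) and $\mathcal{S}^\vee_2$, both of Lebesgue measure zero by Lemma \ref{nomultiplegradient}, so that the remaining $x \in E$ lie in $\text{Int}(\Delta_m^\vee)$ and have single-valued gradient.

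For such an $x$, suppose $x \in \nabla\phi^*(p_0)$ for some $p_0 \in \partial\Delta_\mu$. Unwinding the definition, $x \in H_n^+$ is required for every face $\Delta_n$ containing $p_0$; but since $x$ lies on only one face $\Delta_m^\vee$, this forces $\langle m, n\rangle = 1$ for every such $n$. In particular $p_0 \in H_m^+ \cap \bar{\nabla}\phi(x) = \nabla\phi(x)$, so $p_0$ is the unique gradient of $\phi$ at $x$.

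Next I would bring in the hypothesized conjugate $q \in \Delta_{n_q}$ with $\langle m, n_q\rangle = 0$. Applying Lemma \ref{gradientexistencelem} when $q$ is anomalous (or taking $p = q$ directly if $q$ itself is already a gradient) produces a gradient of the form $p = q + sm$ with $s \geq 0$. The critical geometric observation is that $\langle m, n_q\rangle = 0$ means $m$ is tangent to the supporting hyperplane $\{\langle \cdot, n_q\rangle = -\mu(n_q)\}$ of $\Delta_{n_q}$, so the entire ray $q + tm$ remains in this hyperplane. Combined with $q \in \Delta_{n_q}$ and $p \in \partial\Delta_\mu$, this places $p$ on $\Delta_{n_q}$.

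Combining the two halves, $p_0 = p$ by uniqueness of the gradient in $H_m^+$, so $p_0 \in \Delta_{n_q}$ and the first step's conclusion demands $\langle m, n_q\rangle = 1$, contradicting the hypothesis $\langle m, n_q\rangle = 0$. This rules out a.e.\ $x \in E \cap \nabla\phi^*(\partial\Delta_\mu)$ and yields the claim. The main obstacle is the synthesis in the third paragraph — namely recognizing that the tangency interpretation of $\langle m, n_q\rangle = 0$ forces the $H_m^+$-gradient built from $q$ to stay on the face $\Delta_{n_q}$, which is precisely the obstruction that prevents $x$ from being a gradient of $\phi^*$ at that $p$.
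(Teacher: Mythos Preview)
Your proof is correct and follows essentially the same approach as the paper: discard the measure-zero set $\mathcal{S}^\vee$, argue that $x \in \nabla\phi^*(p_0)$ forces $p_0$ to be the unique gradient $\nabla\phi(x)$, then use $\langle m, n_q\rangle = 0$ to show that the gradient built from $q$ via Lemma~\ref{gradientexistencelem} remains on the face $\Delta_{n_q}$, contradicting the requirement $\langle m, n_q\rangle = 1$ coming from the definition of $\nabla\phi^*$. Your explicit case split on whether $q$ might already be a gradient (allowing $s = 0$) is slightly more careful than the paper's phrasing, but the argument is otherwise identical.
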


\begin{proof}
Consider a point $x\in (E\cap \nabla\phi^*(\partial \Delta_\mu)) \setminus  \mathcal{S}^\vee$. Since $x\in \nabla\phi^*(\partial \Delta_\mu)$, we write $x=\nabla\phi^*(p)$. Since $x$ does not lie on $\mathcal{S}^\vee$, it has a unique gradient, so $p= \nabla\phi(x)$.

Now by assumption $x$ has some anomalous conjugate $p'$ in a face $\Delta_n$ with $\langle m,n\rangle =0$. By Lemma \ref{existence}, there exists a gradient of the form $p'+sm$ for $s>0$. The uniqueness of the gradient then forces $p=p'+sm$. Since $p'\in \Delta_n$, we have $\langle p', n\rangle+\mu(n)=0$, hence
\[
\langle p, n\rangle +\mu(n)= \langle p-p', n\rangle = s\langle m, n\rangle= 0.
\]
This means $p\in \Delta_n$. Since $x$ lies on $\Delta_m^\vee$ but not on $\mathcal{S}^\vee$, it must be in the interior of $\Delta_m^\vee$, so by the definition of gradient $x=\nabla\phi^*(p)$ forces $\langle m, n \rangle=1$. This contradicts $\langle m, n\rangle=0$. This contradiction shows that $x$ cannot exist, which means $(E\cap \nabla\phi^*(\partial \Delta_\mu^\vee)) \setminus  \mathcal{S}^\vee$ is empty. Since $\mathcal{S}^\vee$ has measure zero by Lemma \ref{nomultiplegradient}, we see 
$|E\cap \nabla\phi^*(\partial \Delta_\mu)|=0$.
\end{proof}

We also record the Legendre dual statement.

\begin{lem}\label{anomalousconjugatetypeII3}
Suppose the subset $G\subset \Delta_n$ has the property that any $p\in G$ admits a conjugate point in $\Delta_m^\vee$ with $\langle m,n\rangle=0$. Then $|G\cap \nabla\phi(\partial \Delta_\lambda^\vee)|=0$.
\end{lem}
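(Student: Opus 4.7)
The plan is to mirror the proof of Lemma \ref{anomalousconjugatetypeII1} under the Legendre duality symmetry of the setup. All of the ingredients used there — the gradient existence lemma \ref{gradientexistencelem}, the zero-measure statement \ref{nomultiplegradient}, and the definition of $\nabla\phi$ itself — admit direct Legendre dual analogues obtained by swapping the roles of $x$ and $p$, of $m$ and $n$, and of $\lambda$ and $\mu$. So the strategy is to run the original contradiction argument on the Legendre dual side.

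Concretely, I would pick $p \in (G \cap \nabla\phi(\partial\Delta_\lambda^\vee)) \setminus \mathcal{S}$; since $\mathcal{S}$ has measure zero by Lemma \ref{nomultiplegradient}, it suffices to show this set is empty. Since $p \in \nabla\phi(\partial\Delta_\lambda^\vee)$, there is some $y \in \partial\Delta_\lambda^\vee$ with $p \in \nabla\phi(y)$. Because $p \notin \mathcal{S}$, the point $p$ sits in the interior of a single face $\Delta_n$, and the dual gradient $\nabla\phi^*(p)$ is single valued and equal to $\{y\}$. Using the hypothesis on $G$, pick an anomalous conjugate $x' \in \Delta_m^\vee$ of $p$ with $\langle m,n\rangle=0$. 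Applying the Legendre dual of Lemma \ref{gradientexistencelem} — noting that $p$ lies on only the face $\Delta_n$, so the allowed displacement direction is $n$ — promotes $x'$ to a gradient of the form $x = x' + t n$ with $t \geq 0$; since $x' \notin \nabla\phi^*(p)$ we must have $t > 0$, and by uniqueness $y = x$.

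The contradiction then comes from matching face incidences. A one-line computation using $\langle m,n\rangle = 0$ and $\langle m, x'\rangle = -\lambda(m)$ gives $\langle m, y\rangle = -\lambda(m)$, so $y \in \Delta_m^\vee$. By the definition of $\nabla\phi(y)$, $p \in \nabla\phi(y)$ forces $p \in H_m^+$, i.e.\ $p$ lies on some $\Delta_{n'}$ with $\langle n', m\rangle = 1$. But $p$ is interior to $\Delta_n$ and lies on no other top dimensional face, while $\langle n, m\rangle = 0 \neq 1$, which is the contradiction. The main obstacle is purely bookkeeping: one must verify that the dual version of the gradient existence argument genuinely produces a displacement along $n$ alone (rather than a sum over several vertices of $\Delta^\vee$), and this is exactly what is ensured by the restriction $p \notin \mathcal{S}$. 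Beyond this, the argument is a mechanical dualization.
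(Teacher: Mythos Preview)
Your proof is correct and takes the same approach as the paper, which simply records this lemma as the Legendre dual of Lemma \ref{anomalousconjugatetypeII1} and leaves the mechanical dualization to the reader. The one caveat---that the conjugate $x'$ need not be strictly anomalous a priori---is harmless, since in that case $t=0$, $y=x'\in\Delta_m^\vee$ directly, and the contradiction $p\in\nabla\phi(y)\Rightarrow p\in H_m^+\Rightarrow \langle m,n\rangle=1$ follows immediately.
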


We can now prove Prop. \ref{gradientdecreasesize}.

\begin{proof}
(Prop. \ref{gradientdecreasesize})
First, we notice it suffices to prove Prop. \ref{gradientdecreasesize} only for $E\subset \Delta_m^\vee$. This is because we can decompose $E$ into the union of $E_m\subset \Delta_m^\vee$ for all the possible $m$, and once we know $|\nabla \phi(E_m)|\leq |E_m|$ for every $m$, it would follow that
\[
|\nabla \phi(E)|\leq \sum|\nabla \phi(E_m)|\leq \sum |E_m|=|E|.
\]
The same argument shows more generally that if we can partition $E$ into a union of subsets, and the intersections have measure zero, then it suffices to prove the statement for the subsets.

We will partition $\nabla\phi(E)\setminus \mathcal{S}$ into several subsets. 

\begin{enumerate}
\item 
 Let $G_1$ be the union over $n$ of all the $p\in\nabla\phi(E)\cap \Delta_n\setminus \mathcal{S}$, which admit an anomalous conjugate point in some $\Delta_{m'}^\vee$ with $\langle m',n\rangle=0$. By Lemma \ref{anomalousconjugatetypeII3}, its size $|G_1|=0$.

 \item 
 Let $G_2$ consist of all the $p\in \nabla\phi(E)\setminus \mathcal{S}$ which admit an anomalous conjugate point, but not covered by the first case. We decompose $G_2\subset \nabla\phi(E)\subset H_m^+$ into the union of $G_{2,n}=G_2\cap \Delta_n$ for all $\Delta_n\subset H_m^+$. By the Legendre dual version of Cor. \ref{anomoloustypeI2} applied to $G_{2,n}$, we have
$
 |G_{2,n}|\leq |\nabla\phi^*(G_{2,n})|, 
$
 whence
 \[
 |G_2|= \sum |G_{2,n}|\leq \sum |\nabla\phi^*(G_{2,n})|. 
 \]

 Next,
for any $p\in \nabla\phi(E)\setminus \mathcal{S}$, 
  we can write $p=\nabla\phi(x)$ for $x\in E$, and since $p$ is not in $\mathcal{S}$, it has a unique gradient point which is $x\in E$. Thus $\nabla\phi(\nabla\phi^*(p))$ contains $p$, and $\nabla\phi^*(p)\subset E\subset \Delta_m^\vee$. Since the subset of points in $\Delta_m^\vee$ with multiple gradients has zero measure (\cf Lemma \ref{nomultiplegradient}), we see that the intersections between the sets $\nabla\phi^*(G_{2,n})$ have zero measure, whence
  \[
  |G_2|\leq\sum |\nabla\phi^*(G_{2,n})|= |\nabla\phi^*(G_2)|. 
  \]

\item Let $G_3$ consist of all the $p\in \nabla\phi(E)\setminus \mathcal{S}$ with no anomalous conjugate point. Any such $p$ lies outside of $\mathcal{S}$, hence has a unique gradient, which must then be its unique conjugate point. The set $\nabla\phi^*(G_3)\subset E$ is the union of these conjugate points. By the Legendre dual version of Cor. \ref{variationalinequalitycor} we have $|\nabla\phi^*(G_3)|\geq |G_3|$.
 
\end{enumerate}

By construction $\nabla\phi(E)\setminus \mathcal{S}=G_1\cup G_2\cup G_3$, hence by the above discussions
\[
\begin{split}
|\nabla\phi(E)\setminus \mathcal{S}|\leq |G_1|+|G_2|+|G_3|\leq |\nabla\phi^*(G_2)|+|\nabla \phi^* (G_3)|.
\end{split}
\]
The same argument in item 2 above shows that 
$|\nabla\phi^*(G_2)\cap \nabla\phi^*(G_3)|=0$, so
\[
|\nabla\phi(E)\setminus \mathcal{S}|\leq
|\nabla\phi^*(G_2)|+|\nabla \phi^* (G_3)|
=|\nabla\phi^*(G_2\cup G_3)|\leq |E| . 
\]
The last inequality here is because $\nabla\phi^*(G_2\cup G_3)\subset E$. 
Now since $\mathcal{S}$ has measure zero, we get 
\[
|\nabla \phi(E)|=|\nabla\phi(E)\setminus \mathcal{S}|\leq |E|,
\]
as required.
\end{proof}

\subsection{Structure theorems for the minimizer}

In this section we will list certain structural properties of the minimizer $\phi$, which in the best situation lead to the real MA equation.

\begin{prop}\label{Borelmeasure}
The set function $E\mapsto |\nabla\phi(E)|$ defines a \emph{Borel measure} on $\partial \Delta_\lambda^\vee$, which is absolutely continuous with respect to the Lebesgue measure. Likewise with the Legendre dual analogue.
\end{prop}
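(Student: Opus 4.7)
The plan is to separate the two assertions. Absolute continuity is essentially immediate from Proposition~\ref{gradientdecreasesize}: the inequality $|\nabla\phi(E)|\le |E|$ directly implies that $|E|=0$ forces $|\nabla\phi(E)|=0$, and simultaneously yields monotonicity together with the finiteness bound $|\nabla\phi(E)|\le 1$.

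The substantive task is therefore to upgrade $E\mapsto|\nabla\phi(E)|$ into a countably additive Borel measure. I would first verify measurability of $\nabla\phi(E)$ for Borel $E$. The graph of the conjugate map $\bar{\nabla}\phi$ inside $\partial\Delta_\lambda^\vee\times\partial\Delta_\mu$ is closed, since it is cut out by the continuous identity $\phi(x)+\phi^*(p)=\langle x,p\rangle$ (continuity of $\phi,\phi^*$ comes from Lemma~\ref{EquicontinuityLegendre}). Decomposing $\partial\Delta_\lambda^\vee$ into the finitely many locally closed Borel strata on which the face-incidence combinatorics of $x$ is constant, and intersecting in the $p$-factor with the corresponding fixed set $\bigcap_m\{H_m^+:x\in\Delta_m^\vee\}$, realises $\nabla\phi(E)$ as a finite union of projections of Borel subsets of the product; such projections are analytic, hence universally measurable.

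The technical heart of the argument is a near-disjointness claim: for any two disjoint Borel subsets $E_1,E_2\subset\partial\Delta_\lambda^\vee$,
\[
\nabla\phi(E_1)\cap\nabla\phi(E_2)\subset\mathcal{S}.
\]
To prove this, suppose $p\in(\nabla\phi(E_1)\cap\nabla\phi(E_2))\setminus\mathcal{S}$. By the definition of $\mathcal{S}$, the point $p$ lies on a unique top-dimensional face $\Delta_n$ of $\partial\Delta_\mu$ and $\bar{\nabla}\phi^*(p)\cap H_n^+$ consists of a single element; combined with Lemma~\ref{gradientexistencelem} this forces $\nabla\phi^*(p)=\{x^\ast\}$. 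By the remark following the definition of the gradient set, the single-face property of $p$ guarantees that any $x_i$ with $p\in\nabla\phi(x_i)$ satisfies $x_i\in\nabla\phi^*(p)=\{x^\ast\}$, so $x_1=x_2=x^\ast$, contradicting $E_1\cap E_2=\emptyset$. Lemma~\ref{nomultiplegradient} then gives $|\nabla\phi(E_1)\cap\nabla\phi(E_2)|=0$.

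Countable additivity follows formally: for a disjoint sequence of Borel sets $\{E_i\}$, the identity $\nabla\phi(\bigsqcup_i E_i)=\bigcup_i\nabla\phi(E_i)$ holds because set-valued images always commute with unions, and pairwise intersections of the $\nabla\phi(E_i)$'s are null by the claim, so $|\nabla\phi(\bigsqcup_i E_i)|=\sum_i|\nabla\phi(E_i)|$ by $\sigma$-additivity of the Lebesgue measure. The principal obstacle in this plan is the near-disjointness claim, but it is a clean consequence of the structural analysis of anomalous conjugate points established earlier in the section; the remaining ingredients are routine measure-theoretic bookkeeping.
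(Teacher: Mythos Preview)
Your proposal is correct and follows essentially the same route as the paper. Both arguments deduce absolute continuity directly from $|\nabla\phi(E)|\le|E|$, and both reduce countable additivity to the near-disjointness claim $\nabla\phi(E_1)\cap\nabla\phi(E_2)\subset\mathcal{S}$ for disjoint $E_1,E_2$, combined with $|\mathcal{S}|=0$; the paper states this inclusion without justification (after first discarding $\mathcal{S}^\vee$ from $E$), whereas you prove it cleanly via the single-face remark, and you are also more careful than the paper about the measurability of $\nabla\phi(E)$.
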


\begin{proof}
Since $|\nabla \phi(E)|\leq |E|$ by Prop. \ref{gradientdecreasesize}, we see that $|E|=0$ implies $|\nabla \phi(E)|=0$. Next we justify the countable additivity axiom. Let $E=\bigcup E_i$ be a disjoint partition, then clearly $|\nabla \phi(E)|\leq \sum |\nabla\phi(E_i)|$. To see that the equality holds, without loss we may assume $E$ is disjoint from the measure zero set $\mathcal{S}^\vee$, so that the gradient is a single valued map.  Morever, the mutual intersections of $\nabla\phi(E_i)$ is contained in $\mathcal{S}$, which again has measure zero, so
\[
\sum |\nabla \phi(E_i)|= |\bigcup_i\nabla \phi(E)|= |\nabla\phi(E)|
\]
as required.
\end{proof}

We now introduce a \emph{good-bad decomposition} on $\partial \Delta_\lambda^\vee$. The \emph{good set} $\mathcal{G}^\vee\subset \partial \Delta_\lambda^\vee$ consists of the following two types of points $x\in \partial \Delta_\lambda^\vee$:
\begin{itemize}
\item Type I good point: $x$ has no anomalous conjugate point, so that all conjugate points are gradients of $x$;
\item Type II good point:  $x$ is contained in some face $\Delta_m^\vee$ and $x$ admits an anomalous conjugate point $p\in \bar{\nabla}\phi(x)\cap H_m^-$.
\end{itemize}
The \emph{bad set} $\mathcal{B}^\vee\subset \Delta_\lambda^\vee$ consists of all $x$ contained in some face $\Delta_m^\vee$, and which admits some anomalous conjugate point $p\in \Delta_n$ with $\langle m, n\rangle=0$.  Clearly $\partial \Delta_\lambda^\vee=\mathcal{G}^\vee\cup \mathcal{B}^\vee$.
We leave the reader to write down the Legendre dual version $\partial \Delta_\mu=\mathcal{G}\cup \mathcal{B}$.

\begin{thm}\label{goodbaddecomposition}
(Property of \emph{good-bad decomposition})
If $E\subset \mathcal{G}^\vee$, then $|\nabla \phi(E)|=|E|$. If $E\subset \mathcal{B}^\vee$, then $|\nabla\phi(E)|=0$. In particular $|\mathcal{G}^\vee\cap \mathcal{B}^\vee|=0$.
\end{thm}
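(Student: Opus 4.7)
The plan is to prove each of the three assertions by combining the Borel measure property from Proposition \ref{Borelmeasure} with tailored decompositions of $E$, using Proposition \ref{gradientdecreasesize} for the universal upper bound $|\nabla\phi(E)|\leq |E|$, and pairing it with lower bounds adapted to each type of point.

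For $E\subset \mathcal{G}^\vee$, I would partition $E$, modulo the measure-zero lower-dimensional strata $\mathcal{S}^\vee_1$, into the Type I points $E_{\mathrm{I}}$ (no anomalous conjugate) and, for each top-dimensional face $\Delta_m^\vee$, the Type II points $E_{\mathrm{II},m}\subset \text{Int}(\Delta_m^\vee)$ with an anomalous conjugate in $H_m^-$. On $E_{\mathrm{I}}$, Corollary \ref{variationalinequalitycor} immediately gives $|\nabla\phi(E_{\mathrm{I}})|\geq |E_{\mathrm{I}}|$; on each $E_{\mathrm{II},m}$, Corollary \ref{anomoloustypeI2} yields $|E_{\mathrm{II},m}|\leq |\nabla\phi(E_{\mathrm{II},m})|$. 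Combining with the reverse inequality from Proposition \ref{gradientdecreasesize} and with countable additivity of the Borel measure $E\mapsto |\nabla\phi(E)|$, these piecewise identities package into the global equality $|\nabla\phi(E)|=|E|$.

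For $E\subset \mathcal{B}^\vee$, I would partition $E$ into finitely many pieces $E_{m,n}\subset \text{Int}(\Delta_m^\vee)$ on which every point admits an anomalous conjugate in a fixed face $\Delta_n$ with $\langle m,n\rangle=0$. Lemma \ref{anomalousconjugatetypeII1} forces $|E_{m,n}\cap \nabla\phi^*(\partial\Delta_\mu)|=0$. To convert this into vanishing of $|\nabla\phi(E_{m,n})|$, I would reuse the $G_1\cup G_2\cup G_3$ decomposition of $\nabla\phi(E_{m,n})\setminus \mathcal{S}$ from the proof of Proposition \ref{gradientdecreasesize}: the piece $G_1$ (anomalous conjugates in a face with vanishing pairing) has measure zero by Lemma \ref{anomalousconjugatetypeII3}, and the Legendre-dual versions of Corollaries \ref{anomoloustypeI2} and \ref{variationalinequalitycor} give $|G_2\cup G_3|\leq |\nabla\phi^*(G_2\cup G_3)|$. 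Using symmetry of the gradient relation outside $\mathcal{S}$, one checks $\nabla\phi^*(G_2\cup G_3)\subset E_{m,n}\cap \nabla\phi^*(\partial\Delta_\mu)$, which vanishes, so $|\nabla\phi(E_{m,n})|=0$; countable additivity then gives $|\nabla\phi(E)|=0$.

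The final claim $|\mathcal{G}^\vee\cap \mathcal{B}^\vee|=0$ then falls out by applying both parts to $E=\mathcal{G}^\vee\cap \mathcal{B}^\vee$, which simultaneously yields $|\nabla\phi(E)|=|E|$ and $|\nabla\phi(E)|=0$. The main obstacle I anticipate is the bad-set case: one has to carefully recycle the $G_1/G_2/G_3$ bookkeeping, and in particular establish $\nabla\phi^*(G_2\cup G_3)\subset E_{m,n}$ via the single-valuedness of the gradient outside $\mathcal{S}$ combined with the fact that points of $\partial\Delta_\mu\setminus \mathcal{S}_1$ lie on a unique face, so that the inclusion feeds correctly into Lemma \ref{anomalousconjugatetypeII1}.
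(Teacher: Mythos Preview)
Your treatment of the good set $\mathcal{G}^\vee$ matches the paper's proof exactly: decompose into Type I and Type II pieces and pair Proposition~\ref{gradientdecreasesize} with Corollary~\ref{variationalinequalitycor} (Type I) and Corollary~\ref{anomoloustypeI2} (Type II), using the Borel measure property for additivity.

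For the bad set $\mathcal{B}^\vee$, your argument is correct but takes a genuinely different and more elaborate route than the paper. You recycle the $G_1/G_2/G_3$ decomposition from the proof of Proposition~\ref{gradientdecreasesize} together with Lemma~\ref{anomalousconjugatetypeII1} to squeeze $|\nabla\phi(E_{m,n})|$ between $0$ and $|E_{m,n}\cap\nabla\phi^*(\partial\Delta_\mu)|=0$; this works because, as established in that proof, $\nabla\phi^*(G_2\cup G_3)\subset E$. The paper instead gives a short direct geometric argument: for $x\in\mathcal{B}^\vee\cap\text{Int}(\Delta_m^\vee)\setminus\mathcal{S}^\vee$ with anomalous conjugate $p'\in\Delta_n$ and $\langle m,n\rangle=0$, the unique gradient is $p=p'+sm$ with $s>0$, and one computes $\langle p,n\rangle=\langle p',n\rangle+s\langle m,n\rangle=-\mu(n)$, so $p\in\Delta_n\cap H_m^+$, which is a lower-dimensional face (since $\langle m,n\rangle=0$ means $\Delta_n\not\subset H_m^+$) and hence has measure zero. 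The paper's route is shorter and more transparent about \emph{where} the gradients of bad points land; your route is a natural measure-theoretic approach that avoids any new computation by exploiting what is already proved, at the cost of obscuring this geometric picture. A minor simplification: Lemma~\ref{anomalousconjugatetypeII1} does not require a fixed $n$, so you can partition only by $m$ rather than by pairs $(m,n)$.
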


\begin{proof}
Since the measure $E\mapsto |\nabla\phi(E)|$ has finite additivity, it suffices to consider $E$ of single types. For type I good points, we use $|\nabla\phi(E)|\leq |E|$ from Prop. \ref{gradientdecreasesize} and $|\nabla\phi(E)|\geq |E|$ from Cor. \ref{variationalinequalitycor}. For type II good points, we use Cor. \ref{anomoloustypeI2}, Prop. \ref{gradientdecreasesize}  and finite additivity.

Now we consider the bad points. By finite additivity, without loss $E\subset \Delta_m^\vee$. Since $|\mathcal{S}^\vee|=|\nabla\phi(\mathcal{S}^\vee)|=0$, without loss we can suppose $E$ is disjoint from $\mathcal{S}^\vee$, so every $x\in E$ has a unique gradient $p=\nabla\phi(x)$. By assumption $E$ consists of bad points, so there is an anomalous conjugate $p'\in \Delta_n$ with $\langle m, n\rangle=0$. By Lemma \ref{gradientexistencelem}, the unique gradient is of the form $p=p'+sm$ for $s>0$. Thus
\[
\langle p, n\rangle=\langle p',n\rangle +s\langle n, m\rangle= - \mu (n),
\]
whence $p$ lies on $\Delta_n$ as well. Since $p$ is a gradient of $x$, it must also lie on $H_m^+$, so it falls into $H_m^+\cap \Delta_n$, which has measure zero.
\end{proof}

One lesson of Prop. \ref{goodbaddecomposition} is that $\nabla\phi$ only sees the good set, and an analogous version holds for $\nabla\phi^*$. In fact modulo measure zero sets, these two maps are in some sense inverse to each other, when we restrict to the good set.

\begin{thm}\label{Legendreduality}
(\emph{Legendre duality}) The maps $\nabla\phi$ and $\nabla\phi^*$ have the following properties:
\begin{enumerate}
\item For any $E\subset \mathcal{G}^\vee$, we have $|\nabla\phi(E)\cap \mathcal{B}|=0$, and $|\nabla\phi(E)\cap \mathcal{G}|=|E|$.

\item 
We consider the maps between subsets of $\mathcal{G}^\vee$ and $\mathcal{G}$ defined by
\[
\begin{cases}
E\mapsto \nabla\phi(E)\cap \mathcal{G}, \quad \forall  E\subset \mathcal{G}^\vee,
\\
F\mapsto \nabla\phi^*(F)\cap \mathcal{G}^\vee, \quad \forall F\subset \mathcal{G}.
\end{cases}
\]
The mutual compositions agree with $E, F$ up to a measure zero subset.

\item In particular $|\mathcal{G}|=|\mathcal{G}^\vee|$ and $|\mathcal{B}|=|\mathcal{B}^\vee|$.
\end{enumerate}
\end{thm}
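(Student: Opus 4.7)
The plan is to bootstrap from Proposition \ref{goodbaddecomposition} together with the fact that $|\mathcal{S}|=|\mathcal{S}^\vee|=0$ (Lemma \ref{nomultiplegradient}), and to exploit the elementary reciprocity: outside the measure zero exceptional sets, $p=\nabla\phi(x)$ forces $x\in\nabla\phi^*(p)$. Indeed, if $p\notin\mathcal{S}$, then $p$ lies in the interior of a single top face $\Delta_n$, and the very fact that $p\in\nabla\phi(x)\subset H_m^+$ for every $m$ with $x\in\Delta_m^\vee$ forces $\langle n,m\rangle=1$, whence $\Delta_m^\vee\subset H_n^+$ and therefore $x\in H_n^+$. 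This is the recurring observation that links the two gradient maps. Both $\nabla\phi$ and $\nabla\phi^*$ push forward measure zero sets to measure zero sets (by Proposition \ref{gradientdecreasesize} and its dual), so I can freely ignore $\mathcal{S}, \mathcal{S}^\vee$ and their images throughout.

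For part (1), I fix $E\subset\mathcal{G}^\vee$ and set $F:=\nabla\phi(E)\cap\mathcal{B}$. The Legendre dual of Proposition \ref{goodbaddecomposition} gives $|\nabla\phi^*(F)|=0$ since $F\subset\mathcal{B}$. I then define $E_F:=\{x\in E\setminus\mathcal{S}^\vee : \nabla\phi(x)\in F\setminus\mathcal{S}\}$. By the reciprocity above, $E_F\subset\nabla\phi^*(F)$, so $|E_F|=0$. But by construction $F\subset\nabla\phi(E_F)\cup\nabla\phi(E\cap\mathcal{S}^\vee)\cup\mathcal{S}$, and Proposition \ref{goodbaddecomposition} applied to $E_F\subset\mathcal{G}^\vee$ gives $|\nabla\phi(E_F)|=|E_F|=0$. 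Combined with $|\nabla\phi(E\cap\mathcal{S}^\vee)|=0$ (absolute continuity from Proposition \ref{Borelmeasure}) and $|\mathcal{S}|=0$, I conclude $|F|=0$. Then $|\nabla\phi(E)\cap\mathcal{G}|=|\nabla\phi(E)|-|F|=|E|$ by Proposition \ref{goodbaddecomposition}.

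For part (2), I write $T(E)=\nabla\phi(E)\cap\mathcal{G}$ and $S(F)=\nabla\phi^*(F)\cap\mathcal{G}^\vee$. To show $S(T(E))=E$ up to measure zero, I argue both inclusions. For the forward direction: for $x\in E\setminus\mathcal{S}^\vee$ with $p=\nabla\phi(x)\notin\mathcal{S}$, part (1) gives $p\in\mathcal{G}$ modulo a null set, and reciprocity gives $x\in\nabla\phi^*(p)\cap\mathcal{G}^\vee\subset S(T(E))$. For the reverse: any $x'\in S(T(E))$ arises from some $p\in T(E)\subset\mathcal{G}$ with $x'\in\nabla\phi^*(p)$; outside $\mathcal{S}$ the gradient of $p$ is unique, and since $p=\nabla\phi(x)$ for some $x\in E$, reciprocity forces $x=x'\in E$. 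Symmetric reasoning gives $T(S(F))=F$ modulo measure zero.

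Part (3) is immediate: applying part (1) with $E=\mathcal{G}^\vee$ yields $|\mathcal{G}^\vee|=|\nabla\phi(\mathcal{G}^\vee)\cap\mathcal{G}|\leq|\mathcal{G}|$, and the reverse inequality follows by Legendre dual symmetry, so $|\mathcal{G}|=|\mathcal{G}^\vee|$. Since $\partial\Delta_\lambda^\vee=\mathcal{G}^\vee\cup\mathcal{B}^\vee$ with $|\mathcal{G}^\vee\cap\mathcal{B}^\vee|=0$ (last sentence of Theorem \ref{goodbaddecomposition}), and the corresponding statement holds on $\partial\Delta_\mu$, the identities $d\mathcal{L}(\partial\Delta_\mu)=d\mathcal{L}^\vee(\partial\Delta_\lambda^\vee)=1$ give $|\mathcal{B}|=|\mathcal{B}^\vee|$. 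The main obstacle throughout is the bookkeeping of anomalous conjugate points and multi-valued gradients concentrated on $\mathcal{S},\mathcal{S}^\vee$; the payoff of Lemma \ref{nomultiplegradient} and Proposition \ref{Borelmeasure} is precisely that these exceptional sets and their gradient images are negligible, so the reciprocity argument collapses all the set-theoretic subtleties into measure-zero corrections.
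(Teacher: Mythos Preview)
Your proof is correct and for items (2) and (3) proceeds essentially as the paper does. The one genuine difference is in item (1): the paper obtains $|\nabla\phi(E)\cap\mathcal{B}|=0$ in a single stroke by invoking the Legendre dual of Lemma~\ref{anomalousconjugatetypeII1} (equivalently Lemma~\ref{anomalousconjugatetypeII3}), which says directly that any subset of $\mathcal{B}$ meets the image of $\nabla\phi$ in a null set. You instead run a bootstrap: set $F=\nabla\phi(E)\cap\mathcal{B}$, use the dual good-bad decomposition to get $|\nabla\phi^*(F)|=0$, pull back via reciprocity to show the relevant preimage $E_F$ is null, and then push forward again to conclude $|F|=0$. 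Both routes are valid; the paper's is shorter because it reuses a lemma already established en route to Proposition~\ref{gradientdecreasesize}, while yours has the modest virtue of deriving item (1) purely from the structural Theorems~\ref{goodbaddecomposition} and Proposition~\ref{Borelmeasure} together with the reciprocity observation, without citing Lemma~\ref{anomalousconjugatetypeII1} a second time.
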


\begin{proof}
The claim that $|\nabla\phi(E)\cap \mathcal{B}|=0$ follows from the Legendre dual version of Lemma \ref{anomalousconjugatetypeII1}. By Prop. \ref{Borelmeasure} and Thm. \ref{goodbaddecomposition}, we have $|\mathcal{G}\cap \mathcal{B}|=0$, and
\[
|\nabla \phi(E)\cap \mathcal{G}|= |\nabla \phi(E)|- |\nabla\phi(E)\cap \mathcal{B}|=|\nabla \phi(E)|=|E|.
\]
This proves item 1.

Consequently, the two maps in item 2 \emph{preserve the measures}. To prove the composition statement, without loss $E$ is disjoint from the measure zero set $\mathcal{S}^\vee$. For any $p\in \nabla\phi(E)\cap \mathcal{G}\setminus \mathcal{S}$, the $\nabla\phi^*(p)$ uniquely recovers the point $x\in E$ with $p=\nabla\phi(x)$. Thus the composition of $\nabla\phi$ and $\nabla\phi^*$ recovers a full measure subset of $E$. This proves item 2.

Now by item 1, 2 applied to $\mathcal{G}^\vee$, we get $|\mathcal{G}|=|\mathcal{G}^\vee|$. Since $\partial \Delta_\lambda^\vee= \mathcal{G}^\vee\cup \mathcal{B}^\vee$ and $|\mathcal{G}^\vee\cap \mathcal{B}^\vee|=0$, we have
\[
|\mathcal{B}|=1-|\mathcal{G}|= 1-|\mathcal{G}^\vee|=|\mathcal{B}^\vee|,
\]
which proves item 3.
\end{proof}

The most important special case for us is the following:

\begin{thm}\label{realMAthm}
The following conditions are equivalent:
\begin{enumerate}
\item The size of the bad set $|\mathcal{B}|=0$, or equivalently $|\mathcal{B}^\vee|=0$. (For instance, this works if $\langle n, m\rangle=0$ never holds for any vertices of $\Delta, \Delta^\vee$.)

\item  The images of $\nabla\phi$ and $\nabla\phi^*$ have full measure. 
\end{enumerate}
When this holds, then $|\nabla\phi(E)|=|E|$ for any $E\subset \partial \Delta_\lambda^\vee$, and likewise with $\nabla\phi^*$. The two maps $\nabla\phi$ and $\nabla\phi^*$ compose to the identity modulo measure zero sets.

\end{thm}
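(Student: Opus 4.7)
The proof plan is to assemble the equivalence and the concluding statements directly from the structure theorems already in hand, with only bookkeeping of the good-bad decomposition needed.

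First I would establish the equivalence of the two forms of condition (1). By item 3 of Theorem \ref{Legendreduality} we have $|\mathcal{B}|=|\mathcal{B}^\vee|$, so $|\mathcal{B}|=0$ if and only if $|\mathcal{B}^\vee|=0$. The parenthetical sufficient condition is even more immediate: by the definition of $\mathcal{B}^\vee$, every bad point requires an anomalous conjugate lying on a face $\Delta_n$ with $\langle m,n\rangle=0$, so if no pair of vertices is orthogonal then $\mathcal{B}^\vee=\emptyset$ tautologically (and likewise for $\mathcal{B}$).

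Next I would prove $(1)\Rightarrow (2)$. Assuming $|\mathcal{B}^\vee|=0$, the decomposition $\partial\Delta_\lambda^\vee=\mathcal{G}^\vee\cup \mathcal{B}^\vee$ gives $|\mathcal{G}^\vee|=1$. Then item 1 of Theorem \ref{Legendreduality} applied with $E=\mathcal{G}^\vee$ yields $|\nabla\phi(\mathcal{G}^\vee)\cap \mathcal{G}|=|\mathcal{G}^\vee|=1$, so the image of $\nabla\phi$ has full measure; the Legendre dual argument handles $\nabla\phi^*$. For $(2)\Rightarrow (1)$, I would invoke the Legendre dual of Lemma \ref{anomalousconjugatetypeII1} (which is Lemma \ref{anomalousconjugatetypeII3}): every point of $\mathcal{B}$ sits in some face $\Delta_n$ and admits a conjugate on an orthogonal face, so Lemma \ref{anomalousconjugatetypeII3} applied face-by-face gives $|\mathcal{B}\cap \nabla\phi(\partial\Delta_\lambda^\vee)|=0$. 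Since $\nabla\phi(\partial\Delta_\lambda^\vee)$ has full measure by assumption, this forces $|\mathcal{B}|=0$.

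For the concluding assertions, suppose $|\mathcal{B}|=|\mathcal{B}^\vee|=0$. Given any $E\subset \partial \Delta_\lambda^\vee$, I would split it as $E=(E\cap \mathcal{G}^\vee)\cup (E\cap \mathcal{B}^\vee)$. Proposition \ref{gradientdecreasesize} gives $|\nabla\phi(E\cap\mathcal{B}^\vee)|\leq |E\cap\mathcal{B}^\vee|=0$, while Theorem \ref{goodbaddecomposition} gives $|\nabla\phi(E\cap\mathcal{G}^\vee)|=|E\cap\mathcal{G}^\vee|=|E|$; finite additivity from Proposition \ref{Borelmeasure} combines these into $|\nabla\phi(E)|=|E|$, and the Legendre dual statement is identical. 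Finally, the composition statement follows from item 2 of Theorem \ref{Legendreduality}: for $E\subset \partial\Delta_\lambda^\vee$ one has, up to the null sets $\mathcal{B}^\vee$ and $\mathcal{S}^\vee$, that $E\subset \mathcal{G}^\vee$, so $\nabla\phi^*(\nabla\phi(E)\cap \mathcal{G})$ agrees with $E$ modulo a measure-zero set, and since $\mathcal{B}$ itself is null the intersection with $\mathcal{G}$ may be dropped.

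The only step with any genuine content is the implication $(2)\Rightarrow (1)$, which is where the nontrivial analytic input (Lemma \ref{anomalousconjugatetypeII3}) is used; everything else is a direct consequence of the good-bad bookkeeping already established. I do not anticipate any real obstacle, as the preceding structure theorems have already absorbed the hard work of controlling anomalous conjugate points.
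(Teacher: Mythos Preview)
Your argument is correct and tracks the paper closely. The one notable difference is in $(2)\Rightarrow(1)$: the paper works on the domain side, using Theorem \ref{goodbaddecomposition} and finite additivity to compute $|\nabla\phi(\partial\Delta_\lambda^\vee)|=|\nabla\phi(\mathcal{G}^\vee)|+|\nabla\phi(\mathcal{B}^\vee)|=|\mathcal{G}^\vee|$, which forces $|\mathcal{G}^\vee|=1$ once the image has full measure; you instead work on the target side, invoking Lemma \ref{anomalousconjugatetypeII3} face-by-face to show that the image of $\nabla\phi$ misses $\mathcal{B}$ up to measure zero, whence $|\mathcal{B}|=0$. Both routes are short and equivalent in strength, since Theorem \ref{goodbaddecomposition} already packages the content of Lemmas \ref{anomalousconjugatetypeII1} and \ref{anomalousconjugatetypeII3}; the paper's version is marginally more economical in that it cites the packaged result rather than reaching back to the underlying lemma.
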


\begin{proof}
By Thm. \ref{Legendreduality}, the vanishing of $|\mathcal{B}|$ would force $|\mathcal{B}^\vee|=0$. Thus the good sets have full measure, and we know from Thm. \ref{Legendreduality} that $\nabla\phi$ and $\nabla\phi^*$ on the good sets are measure preserving, mutually inverse maps modulo zero measure. In particular the images of $\nabla\phi$ and $\nabla\phi^*$ have full measure.

Conversely, suppose the images of $\nabla\phi$ and $\nabla\phi^*$ have full measure. By Thm. \ref{goodbaddecomposition}, 
\[
|\nabla \phi (\partial \Delta_\lambda^\vee)|= |\nabla \phi (\mathcal{G}^\vee)|+ |\nabla \phi (\mathcal{B}^\vee)| =|\mathcal{G}^\vee|,
\]	
which then forces $|\mathcal{G}^\vee|=1$, namely $|\mathcal{B}^\vee|=0$. This shows that item 2 implies item 1, and the rest follow from Thm. \ref{Legendreduality}.
\end{proof}

When the equivalence conditions in Thm. \ref{realMAthm} is satisfied, then the conclusions admit classical interpretations. When we restrict $\phi$ to an open face $\text{Int}(\Delta_m^\vee)$ to obtain a convex function still denoted as $\phi$, then $\nabla\phi$ is identified with the classical gradient (\cf section \ref{conjugategradient}). Recall from (\ref{normalizedmeasure}) how the normalized measure is related to the Lebesgue measure induced by the integral structure. Morever, the projection map  $H_m^+\to M_\R/\R m$ has Jacobian factor one with respect to the integral Lebesgue measure (\cf the proof of Lemma \ref{lemmaF'}). In summary, the fact that $|\nabla\phi(E)|=|E|$ with respect to the normalized measures, translates into the weak Alexandrov formulation of the following real MA equation over $\text{Int}(\Delta_m^\vee)$:
\begin{equation}\label{realMA1}
\det(D^2\phi)= C_0= \frac{  \int_{\partial \Delta_{\mu}} dp   }{ \int_{\partial \Delta_{\lambda}^\vee} dx  }.
\end{equation}
Completely analogously, we have the weak Alexandrov formulation of the following real MA equation over the open faces $\text{Int}(\Delta_n)$:
\begin{equation}
\det(D^2\phi^*)= C_0^{-1}.
\end{equation}
This expresses the local PDE nature of the variational problem.





\subsection{Some examples}

We first give some simple examples to which Theorem \ref{realMAthm} applies.

\begin{eg}
We revisit the standard simplex example \cite[section 1.1]{Hultgren}\cite{LiFermat}. 
We take the standard basis $e_0,e_1,\ldots e_{d+1}$ inside $\Z^{d+2}$, and define
\[
M= \{ p\in \Z^{d+2}|\sum p_i=0   \},\quad N= M^\vee= \Z^{d+2}/\Z(1,\ldots 1),\quad 
\]
and $M_\R=M\otimes \R,  N_\R=N\otimes \R.$ The polytope $\Delta\subset M_\R$ is the convex hull of \[
m_i=(d+2)e_i- \sum_0^{d+1} e_j,\quad i=0,\ldots d+1.
\]
Its dual polytope $\Delta^\vee$ is also a simplex, with vertices given by
\[
n_0=(-1,0,\ldots, 0),\ldots  n_{d+1}=(0,\ldots, 0,-1).
\]
Thus \[
\langle m_i, n_j\rangle=\begin{cases}
-(d+1),\quad & i=j,
\\
1,\quad & i\neq j.
\end{cases}
\]
In particular $\langle n,m\rangle =0$ never occurs, and Thm \ref{realMAthm} applies. This example corresponds to hypersurfaces inside $\mathbb{CP}^{d+1}$. 
\end{eg}

\begin{eg}
Suppose we have $k$ integral reflexive polytopes $\Delta_i\subset M_{\R,i}$, with dual polytopes $\Delta_i^\vee\subset N_{\R,i}$, such that 
the pairing $\langle m,n\rangle \neq 0$ for all possible vertices $m, n$ of $\Delta_i$ and $\Delta^\vee_i$. Then we can take
\[
N_\R=\oplus_i N_{\R,i},\quad M_\R= \oplus_i M_{\R,i},\quad \Delta\subset M_\R,\quad \Delta^\vee\subset N_\R.
\]
The vertices of $\Delta^\vee$ are of the form $(0,\ldots ,n_i, 0,\ldots)$ with $n_i\in \text{vertex}(\Delta_i^\vee)$, while the vertices of $\Delta$ are of the form $(m_1,\ldots m_k)$ for $m_i\in \text{vertex}(\Delta_i)$.  It is immediate to check that the pairing $\langle m,n\rangle\neq 0$ still holds for $\Delta,\Delta^\vee$. Algebro-geometrically, this is just taking the product of several toric Fano varieties. 
\end{eg}

We now give a simple example which shows that $|\mathcal{B}|=0$ might fail if $\langle m,n\rangle=0$ is allowed.

\begin{eg}
This example lives inside $\R^2$. Let $\Delta$ be the convex full of 
\[
(1,0), (0,1), (-1,1), (-1,0), (0,-1), (1,-1).
\]
Then $\Delta^\vee$ is the convex hull of 
$
\pm (1,0), \pm (1,1), \pm (0,1).
$
Notice for instance that $m_0=(1,-1)$ is orthogonal to $n_0=(1,1)$. We can take $\Delta_\lambda^\vee=\Delta^\vee$, and let
\[
\Delta_\mu= \{ (x,y)\in \R^2: |x|\leq 1, |y| \leq 1,  |x+y|\leq \epsilon    \},
\]
for some given constant $0<\epsilon<1$. When $\epsilon\ll 1$, then $\Delta_\mu$ is concentrated along a line segment.

We claim that $|\mathcal{B}|\neq 0$. Otherwise by Thm \ref{realMAthm}, we have $|\nabla\phi(E)|=|E|$ for any $E$. We take $E=\Delta_{m}^\vee$, so that by the definition of gradient $\nabla\phi(E)\subset H_{m}^+$. Thus
\[
|\Delta_{m}^\vee| = |\nabla\phi(\Delta_{m}^\vee)|\leq |H_{m}^+|.
\]
For the choice $m=m_0=(1,-1)$, this leads to a contradition for small enough $\epsilon$.

Algebro-geometrically, the corresponding Fano manifold $X_\Delta$ is the blow up of $\mathbb{P}^1\times \mathbb{P}^1$ at two toric fixed points, so there are two disjoint exceptional divisors $E_1, E_2$. Adjusting $\epsilon$ amounts to changing the K\"ahler class on $X_\Delta$ in a 1-parameter family, with $\epsilon=1$ corresponding to $c_1(\pi_1^*[\mathcal{O}_{\mathbb{P}^1}]+ \pi_2^*[\mathcal{O}_{\mathbb{P}^1}])$, and decreasing $\epsilon$ corresponding to subtracting multiplies of $E_1+E_2$. The family $X_t$ of Calabi-Yau hypersurfaces (\cf the introduction) is here just a pencil of elliptic curves. The second cohomology of an elliptic curve has rank one, so the restriction of all these K\"ahler classes to the elliptic curve will all be proportional. A plausible interpretation of the above example, is that once an ample polarization $L\to X$ on the degenerating Calabi-Yau hypersurfaces is fixed, there is still some flexibility to extend this polarization to $L\to X_\Delta$, and for an inappropriate choice of $L\to X_{\Delta}$, the variational problem may not be helpful for the SYZ conjecture.

\end{eg}

\subsection{Open questions}

Our work suggests a number of natural questions, which are relevant for further applications to the SYZ conjecture, and which may be of some independent interest to PDE theorists.

\begin{enumerate}
\item 
Thm. \ref{realMAthm} makes it clear that the failure for the minimizer to satisfy $|\nabla\phi(E)|=|E|$ is measured by the \emph{size of the bad set} $|\mathcal{B}|$. Is it possible to compute or estimate $|\mathcal{B}|$ in general, when we allow $\langle m,n\rangle=0$?

\item
Assume $|\mathcal{B}|=0$. Is it possible for the anomalous conjugate point to exist?

\item 
Assume $|\mathcal{B}|=0$. When can we prove that \emph{the gradient is unique everywhere} (instead of Lebesgue-a.e), namely $\nabla\phi(x)$ consists of only one point for every $x\in \partial \Delta_\lambda^\vee$?

\item 
The Legendre dual version of the above question asks if  $\nabla\phi^*(p)$ consists of only one point for every $p\in \partial \Delta_\mu$.

This question is closely related to the \emph{strict convexity} of the restriction $\phi$ to a convex function on the open faces.
This has implication on the regularity question of $\phi$, since a strictly convex solution of the real MA equation is known to be smooth.

If both $\nabla\phi$ and $\nabla\phi^*$ are single valued, they would define mutually inverse maps setting up a homeomorphism between $\partial \Delta_\lambda^\vee$ and $\partial \Delta_\mu$. This global version of Legendre duality is desirable from the viewpoint of potential applications to mirror symmetry.

\item
The polytopes $\partial\Delta_\lambda^\vee$ and $\partial \Delta_\mu$ have no natural global smooth structure; the only a priori information is that the open faces $\text{Int}(\Delta_m^\vee)$ and $\text{Int}(\Delta_n)$ have natural affine structures, and in particular smooth structures.

Now suppose the previous questions have positive answers, so that $\phi$ and $\phi^*$ are smooth on the respective open faces, and the gradient maps set up a global homeomorphism. Then we can transfer $\text{Int}(\Delta_n)$ to the open subset $\nabla\phi^*(\text{Int}(\Delta_n) )\subset \partial \Delta_\lambda^\vee$, and on the overlap with the open faces of $\partial \Delta_\lambda^\vee$ the smooth structures are compatible. This suggests that there is a \emph{hidden smooth structure with singularity} on $\partial \Delta_\lambda^\vee$, where the singular locus lies on the subset of the lower dimensional faces of $\partial\Delta_\lambda^\vee$, which maps under $\nabla\phi$ to the lower dimensional faces of $\partial\Delta_\mu$. Morever, this smooth structure with singularity is compatible with Legendre duality.

It would be desirable for mirror symmetry (\cf the Kontsevich-Soibelman conjecture \cite{KS}) that the singular locus is of \emph{Hausdorff codimension two}.

\begin{rmk}
J. Hultgren informs the author that their upcoming work \cite{Hultgrennew} will contain more discussions on the affine structure with singularity.
\end{rmk}

\end{enumerate}

\section{Application to the SYZ conjecture}\label{applicationtoSYZ}

For backgrounds on the non-archimedean (NA) geometry, and the previous literature on its relation to K\"ahler geometry, the reader may refer to \cite{Boucksomsurvey} \cite[section 1]{PilleSchneider}\cite{LiNA}. A recent survey on the progress in the metric aspects of the SYZ conjecture can be found in \cite{Lisurvey}.

We now sketch how the result of section \ref{VariationalProblem} implies Theorem \ref{SYZthm}. This argument is essentially known, and consists of assembling ingredients from the literature.

\subsection{Complex geometry setup}

We work in the context of the introduction, so $X_\Delta$ is a smooth toric Fano manifold associated to the reflexive Delzant polytope $\Delta$, and $X=\cup X_t$ (\cf (\ref{torichypersurface})) is a union of Calabi-Yau hypersurfaces degenerating to the toric boundary. This degeneration family admits a natural model
\[
\mathcal{X}= \{  X_{can}+tF=0      \}\subset X_\Delta\times \C.
\]
Upon base change, $X$ can be regarded as a family over $\text{Spec}\C(\!(t)\!)$, and $\mathcal{X}$ defines a \emph{model} over $\text{Spec}\C[\![t]\!]$, 
which is divisorial log terminal (\emph{dlt}) since by assumption the singularity structure on the central fibre is \'etale locally of the form (\cf \cite[Page 25]{Hultgren})
\[
\mathcal{X}\simeq V(x_1\ldots x_k-ty)\subset \mathbb{A}^{k+2}_{x_i, t,y}\times \mathbb{A}^{d-k},\quad d=\dim_\C X_t.
\]
The central fibres is identified with the toric boundary of $X_\Delta$, and consists of many divisors labelled by the vertices $n\in \Delta^\vee$, each with multiplicity one. The \emph{dual complex} $\Delta_{\mathcal{X}}$ of the dlt model $\Delta_{\mathcal{X}}$ has vertices corresponding to these divisors, and the tropicalization map gives a natural identification
\[
\text{trop}: \Delta_{\mathcal{X}}\simeq \Delta^\vee\subset N_\R.
\]
The model $\mathcal{X}$ is \emph{minimal}, meaning that the logarithmic relative canonical bundle is trivial. As a caveat, our dlt model $\mathcal{X}$ is not $\Q$-factorial.

Any ample line bundle $L\to X_\Delta$ corresponds to a moment polytope $\Delta_\mu$ for some choice of $\mu(n)<0$ for $n\in \text{vertex}(\Delta^\vee)$, such that the piecewise linear function $\mu: N_\R\to \R$ extending the values $\mu(n)$ is a `strictly concave' function. This induces a polarization line bundle $L\to X$, which in particular prescribes the K\"ahler class $c_1(L)$ on $X_t$. 
We denote $(L^d)= \int_{X_t} c_1(L)^d$.

\begin{rmk}
The concavity condition on $\mu$ is related to ampleness, and if we drop it, the line bundle $L\to X_\Delta$ will only be big, even though the induced line bundle $L\to X$ might still be ample. 
On the other hand, in section \ref{VariationalProblem} this condition is not  needed. Once we drop it, then for some $n\in \text{vertex}(\Delta^\vee)$ the face $\Delta_n$ may be empty, but this does not affect the arguments.
\end{rmk}

By the adjunction formula, $X$ over some small punctured disc admits a nowhere vanishing holomorphic volume form $\Omega$, which induces holomorphic volume forms $\Omega_t$ on the hypersurfaces $X_t$, hence normalized Calabi-Yau measures on $X_t$
\begin{equation}
\mu_t= \frac{\Omega_t\wedge \overline{\Omega}_t   }{  \int_{X_t}\Omega_t\wedge \overline{\Omega}_t    }.
\end{equation}
The metric SYZ conjecture concerns the Calabi-Yau metrics $(X_t,\omega_t)$ in the class $c_1(L)$, satisfying the complex MA equation
\begin{equation}
\omega_t^d=  (L^d)  \mu_t.
\end{equation}
The SYZ conjecture predicts that given any small $0<\delta\ll 1$,  for all $t$ small enough depending on $\delta$, then $(X_t,\omega_t)$ contains an open subset with $\mu_t$-measure at least $1-\delta$, which admits a special Lagrangian torus fibration.

\subsection{Non-archimedean geometry}

The work of Boucksom-Jonsson \cite{Boucksom} established the following picture.
The family $X$ viewed as a smooth projective variety over $\text{Spec}\C(\!(t)\!)$ gives rise to the \emph{Berkovich space} $X^{an}$, which can be regarded as the limit of $X_t$ as $t\to 0$ under the hybrid topology. The Berkovich space contains the \emph{essential skeleton} $Sk(X)$, and under the hybrid topology convergence, the family of probability measures $\mu_t$ converge to a Lebesgue type probability measure $\mu_0$ supported on $Sk(X)\subset X^{an}$. This has a concrete description. Given a dlt model $\mathcal{X}$, there is a canonical embedding of its dual complex $\Delta_{\mathcal{X}}$ into $X^{an}$, and if $\mathcal{X}$ is minimal, then 
\[
\text{emb}: \Delta_{\mathcal{X}}\simeq Sk(X)\subset X^{an}.
\]
In our case of Calabi-Yau hypersurfaces, under the canonical isomorphisms
\[
\Delta_{\mathcal{X}}\xrightarrow{emb} Sk(X) \xrightarrow{trop} \partial\Delta^\vee\subset N_\R,
\]
 the probability measure $\mu_0$ is up to a global constant the Lebesgue measure $dx$ induced by the integral structure on the open faces of $\partial \Delta^\vee$. 
 When the dust settles, $\mu_0=d\mathcal{L}^\vee$ (\cf (\ref{normalizedmeasure})), where we choose $\lambda=-1$ so that $\Delta_\lambda^\vee=\Delta^\vee$.


Given the model $\mathcal{X}$, every point on $X^{an}$ has a centre $c_{\mathcal{X}}(x)$ which is a scheme theoretic point in $\mathcal{X}_0$. The map $c_{\mathcal{X}}: X^{an}\to \mathcal{X}_0$ is anticontinuous, which means the preimage of closed subsets of $\mathcal{X}_0$ are open subsets of $X^{an}$. In particular, the preimage of the finitely many toric fixed points in $\mathcal{X}_0$ is an open subset $U\subset X^{an}$, and the evaluation of the logarithmic coordinates provides a natural \emph{retraction map} from $U$ to the open faces of $\Delta_{\mathcal{X}}$, which is identified with the \emph{tropicalization map} $U\to \partial \Delta^\vee\subset N_\R$.


Boucksom-Favre-Jonsson \cite{Boucksom1}\cite{Boucksomsemipositive}\cite{Boucksomsurvey} developed a NA pluripotential theory, which concerns the analogue of semipositive metrics and Monge-Amp\`ere measures on $X^{an}$. Its central result concerns the solution of the NA Calabi conjecture. The special case relevant to us is the following:

\begin{thm}
Let $L\to X$ be an ample line bundle, then there is a unique up to constant semipositive metric $\norm{\cdot}_{CY}$ on $L\to X^{an}$, whose NA MA measure is equal to the Lebesgue measure $(L^d)\mu_0$ supported on $Sk(X)\subset X^{an}$.
\end{thm}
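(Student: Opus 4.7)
The plan is to invoke the variational approach to non-archimedean pluripotential theory developed by Boucksom--Favre--Jonsson. First I would set up the space $\mathcal{E}^1(L)$ of NA semipositive metrics on $L\to X^{an}$ of finite energy, modelled on an ample Fubini--Study reference metric $\norm{\cdot}_0$. Writing a candidate metric as $\norm{\cdot}=e^{-\phi}\norm{\cdot}_0$ with $\phi$ a (bounded) NA psh function, the Monge--Amp\`ere energy $E(\phi)$ is defined via intersection-theoretic integration against test configurations, and the NA Monge--Amp\`ere measure $MA(\phi)$ is the distributional derivative of $E$. The target measure $\nu=(L^d)\mu_0$ is a Radon probability measure (up to the overall normalization) supported on the essential skeleton $Sk(X)$, and is of Lebesgue type on each open face.

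Next I would study the functional
\[
F(\phi)=E(\phi)-\int_{X^{an}}\phi\, d\nu
\]
on $\mathcal{E}^1(L)$ normalized by $\sup\phi=0$. The key technical point is to verify that $\nu$ has \emph{finite energy}, i.e.\ $\int |\phi|\, d\nu<\infty$ on finite-energy classes, which here follows because $\nu$ is Lebesgue on the skeleton and NA psh functions are bounded along the skeleton by boundedness of the Legendre transform on $\Delta^\vee$. Granted this, one obtains an a priori $L^1(\nu)$ estimate leading to coercivity of $-F$, then produces a minimizer by compactness of sublevel sets of $-E$ in the weak topology (the Boucksom--Jonsson compactness principle).

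The Euler--Lagrange equation of $F$ at the minimizer $\phi_{CY}$ reads precisely $MA(\phi_{CY})=\nu$, giving existence. For uniqueness I would appeal to the strict concavity of $E$ modulo additive constants, which is the NA analogue of the Calabi--Yau uniqueness result; concretely, if two metrics $\phi_1,\phi_2$ both solve the equation, then the domination principle (or the orthogonality relation $\int(\phi_1-\phi_2)(MA(\phi_1)-MA(\phi_2))\le 0$ combined with equality case of the Hodge index-type inequality) forces $\phi_1-\phi_2$ to be constant.

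The main obstacle, and the place where the theory of \cite{Boucksom}\cite{Boucksom1}\cite{Boucksomsemipositive} does the heavy lifting, is establishing the full regularity/continuity of the Monge--Amp\`ere operator along decreasing/increasing nets and the compactness of finite-energy classes in the NA setting; these require a delicate approximation by ample test configurations and a NA version of Ko{\l}odziej-type capacity estimates. Once these foundational results are in hand, both existence and uniqueness reduce to the formal variational argument sketched above, and I would simply cite \cite{Boucksom}\cite{Boucksom1}\cite{Boucksomsemipositive}\cite{Boucksomsurvey} for this step rather than re-derive it, since the present paper's novelty lies on the real MA side developed in section \ref{VariationalProblem} rather than on the NA foundations.
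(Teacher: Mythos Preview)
Your proposal is essentially correct and in fact goes further than the paper itself: the paper does not prove this theorem at all, but simply states it as a known consequence of the non-archimedean pluripotential theory of Boucksom--Favre--Jonsson, with a bare citation to \cite{Boucksom1}\cite{Boucksomsemipositive}\cite{Boucksomsurvey}. Your sketch of the variational mechanism (finite-energy classes, the functional $F(\phi)=E(\phi)-\int\phi\,d\nu$, coercivity, and the domination/orthogonality argument for uniqueness) is an accurate high-level summary of how that result is obtained in the cited works, and your closing remark that one should simply cite BFJ here since the paper's novelty lies in section~\ref{VariationalProblem} matches exactly what the paper does.
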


It is convenient to think about a semipositive metric $\norm{\cdot}$ in terms of a potential function $\varphi$. 
Concretely in our case, we can choose any local trivialising section $s$ of the line bundle $L\to X_\Delta$, and evaluate $\varphi=-\log \norm{s}$. We say $\norm{\cdot}$ satisfies the `weak comparison property', if under the retraction map from $U$ to the open faces of $\Delta_{\mathcal{X}}$, 
the potential $\varphi$ is constant on fibres. Notice that the pair $(\mathcal{X}, \mathcal{X}_0)$  is a semistable SNC pair near the toric fixed points.
Vilsmeier \cite{Vilsmeier} implies that

\begin{thm}\cite{Vilsmeier}\cite[Lem. 6.1]{Hultgren}\cite[Thm 3.17]{PilleSchneider}\label{Vilsmeier}
Under the weak comparison property, then the NA MA measure $\text{MA}_{NA}$ on $U\subset X^{an}$ is related to the real MA measure $\text{MA}_\R$ on the open faces of $\Delta_{\mathcal{X}}\simeq Sk(X)\simeq \partial\Delta^\vee$, by the equation
\[
1_{Sk(X)} \text{MA}_{NA} (\norm{\cdot})= d! \text{MA}_\R(\varphi). 
\]

\end{thm}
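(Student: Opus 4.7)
My plan is to reduce the identity to a purely local computation on the semistable SNC model $\mathcal{X}$ and then invoke Vilsmeier's framework for NA Monge-Amp\`ere measures on such models. Since the identity is a statement of measures on $U\subset X^{an}$, and $U$ is the preimage under the centre map of the finitely many toric fixed points of $\mathcal{X}_0$, it suffices to prove the identity in one \'etale neighbourhood at a time. Near each toric fixed point, $\mathcal{X}$ is \'etale locally the standard semistable SNC model $V(x_0\cdots x_k - ty) \subset \mathbb{A}^{k+2}_{x_i,t,y}\times \mathbb{A}^{d-k}$, and the preimage of this chart under $c_{\mathcal{X}}$ is parameterized by monomial valuations, with the retraction to the dual complex given by tropicalization $(v_0,\ldots,v_k)$ with $\sum v_i = 1$, landing in the interior of a top-dimensional face $\sigma$ of $\Delta_{\mathcal{X}}$.

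Next I would use the weak comparison property, which by definition says that the potential $\varphi$ of the semipositive metric $\norm{\cdot}$ descends through the retraction: locally $\varphi = \tilde\varphi\circ r$ for a continuous convex function $\tilde\varphi$ on $\sigma$. With this reduction in hand, the third step would apply the local formula of Vilsmeier \cite{Vilsmeier}, in the form used in \cite[Lem. 6.1]{Hultgren} and \cite[Thm 3.17]{PilleSchneider}: for a semipositive potential of this retracted form on a semistable SNC model, the NA MA measure is supported on the skeleton and on each open face equals $d!$ times the real Monge-Amp\`ere measure of $\tilde\varphi$ with respect to the integral Lebesgue measure. Intuitively the factor $d!$ emerges from expanding the top power $(\text{dd}^c\varphi)^d$ into mixed differentials of the logarithmic coordinates $\log|x_i|$ via the multinomial theorem, with $d!$ identical contributions surviving on the skeleton after antisymmetrization.

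The main technical obstacle, though essentially a bookkeeping one, will be tracking normalization conventions. I would need to verify that the integral affine structure on $\Delta_{\mathcal{X}}$ induced by the model coincides with the one transported from $\partial\Delta^\vee\subset N_\R$ via tropicalization, and that the NA $\text{dd}^c$ convention matches the Hessian convention on the real side to produce exactly the factor $d!$ rather than some other combinatorial constant. Since the dlt model here is not $\Q$-factorial, one must also verify that Vilsmeier's SNC formula still applies in the form required; this is ensured by the \'etale-local SNC structure near the toric fixed points, which is precisely where $U$ is supported. Once these compatibilities are secured, the theorem follows directly from the cited local formula.
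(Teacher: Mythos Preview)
The paper does not prove this theorem at all: it is stated as a black-box result imported from the cited references \cite{Vilsmeier}, \cite[Lem.~6.1]{Hultgren}, \cite[Thm~3.17]{PilleSchneider}, with no argument given beyond the one-line attribution ``Vilsmeier \cite{Vilsmeier} implies that''. Your proposal is therefore not in conflict with the paper; rather, you have sketched the content of those references---the \'etale-local reduction to the standard semistable SNC chart, the descent of the potential through the retraction under the weak comparison property, and Vilsmeier's local identification of the NA and real MA measures---which is more than the paper itself supplies. Your remarks on the $\Q$-factoriality caveat and the normalization bookkeeping are also consistent with the paper's later use of the result (\cf the Remark following Thm.~\ref{NAMASYZ} and the normalization computation in the proof of the Corollary after it).
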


A key link to the SYZ conjecture is provided by \cite{LiNA}.

\begin{thm}\label{NAMASYZ}
The weak comparison property for the metric $\norm{\cdot}_{CY}$ implies the SYZ conjecture \ref{SYZconj} for the given family $X_t$.
\end{thm}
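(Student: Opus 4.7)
The plan is to deduce the SYZ conjecture from the weak comparison property by chaining three existing pieces of machinery. First, weak comparison combined with Theorem \ref{Vilsmeier} converts the NA MA equation into a real MA equation on the skeleton: the NA CY potential $\varphi_{CY} = -\log \norm{s}_{CY}$ on $U \subset X^{an}$ pulls back, under the retraction to $\Delta_{\mathcal{X}} \simeq \partial \Delta^\vee$, from a convex function $\phi$ satisfying $d!\,\text{MA}_\R(\phi) = (L^d)\mu_0$ on each open face $\text{Int}(\Delta_n)$. Since $\mu_0$ is a positive Lebesgue measure there, standard Alexandrov/Caffarelli regularity makes $\phi$ smooth and strictly convex on each $\text{Int}(\Delta_n)$.

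Next, I would invoke the non-archimedean to archimedean comparison developed in \cite{LiFermat}\cite{LiuniformSkoda}\cite{LiNA}: on the preimage in $X_t$ of any relatively compact $K \Subset \text{Int}(\Delta_n)$ under the tropicalization map, the K\"ahler potential of the Calabi-Yau metric $\omega_t$ converges, after suitable tropical rescaling, to $\phi$ as $t \to 0$. This produces a semi-flat K\"ahler metric $\omega_{sf}$ on this preimage, built from the Hessian of $\phi$ along the base directions and the toric $T^d$-action along the fibre directions, with $\omega_t$ being $C^0$-close to $\omega_{sf}$ for small $t$ (and in fact $C^k$-close for any $k$ by elliptic bootstrapping on the complex MA equation).

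On this open subset of $X_t$, the logarithm of the toric coordinates provides a Lagrangian $T^d$-fibration for $\omega_{sf}$, and the perturbation argument of \cite{LiFermat} deforms it to a special Lagrangian $T^d$-fibration for the actual triple $(X_t, \omega_t, \Omega_t)$, once $t$ is small. For the measure bound, I would use that $\mu_t$ converges in the hybrid topology to $\mu_0 = d\mathcal{L}^\vee$, which is concentrated on the open top-dimensional faces of $\partial \Delta^\vee$; the complement of our fibration region corresponds to a shrinking neighbourhood of the toric fixed points and of the codimension-$\geq 2$ skeleton strata, and thus has $\mu_t$-measure $<\delta$ for $t$ small enough depending on $\delta$.

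The main obstacle is the perturbation step combined with the uniform measure control, which is the technical core of \cite{LiFermat} and is imported essentially unchanged; the role of the hypothesis here is solely to guarantee the smooth real MA solution $\phi$ on the skeleton. The contribution of the present paper lies upstream: the variational analysis of Section \ref{VariationalProblem}, under assumption (\ref{extraassumption}), is what eventually verifies the weak comparison hypothesis of Theorem \ref{NAMASYZ} for the general toric hypersurface family $X_t$, so that the pipeline above can be applied.
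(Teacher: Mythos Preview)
The paper does not prove Theorem~\ref{NAMASYZ} itself; it is quoted from \cite{LiNA}, with only a remark on adapting from the semistable SNC to the dlt setting. Your sketch should therefore be read as an outline of the argument in \cite{LiNA}. The overall architecture you give is correct: weak comparison plus Theorem~\ref{Vilsmeier} yields a real MA equation for the skeleton potential $\phi$ on the open faces; the non-archimedean/archimedean comparison of \cite{LiNA}\cite{LiuniformSkoda} makes the Calabi--Yau potential on $X_t$ close to the pullback of $\phi$; $C^0$-to-$C^k$ bootstrapping and the perturbation of \cite{LiFermat} then produce the special Lagrangian fibration; and hybrid convergence $\mu_t\to\mu_0$ supplies the $1-\delta$ measure bound.

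The gap is the assertion that ``standard Alexandrov/Caffarelli regularity makes $\phi$ smooth and strictly convex on each $\text{Int}(\Delta_n)$.'' Caffarelli's interior $C^{2,\alpha}$ estimate takes strict convexity as a \emph{hypothesis}, not a conclusion, and strict convexity is not automatic for Alexandrov solutions of $\det D^2\phi=\text{const}$: in face dimension $\geq 3$ there are Pogorelov-type solutions whose contact set with a supporting affine function reaches the boundary of the face (\cf \cite{Caff1}\cite{Caff4}). The present paper in fact lists precisely this regularity question as open at the end of Section~\ref{VariationalProblem}, so it cannot be invoked as standard. The route in \cite{LiNA} does not require global smoothness of $\phi$: one regularizes the merely continuous skeleton potential by smooth convex approximants, builds the semi-flat comparison metrics from those, and absorbs both the approximation error and any singular locus of $\phi$ into the $\delta$-loss of measure (alternatively one can appeal to partial regularity as in \cite{Mooney} to work on the full-measure smooth locus). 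Replacing your regularity claim with such a regularization step would repair the sketch.
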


\begin{rmk}
The semistable SNC setting (instead of the dlt setting) was assumed in \cite{LiNA}, but the proof only uses the semistable SNC condition on those divisor intersection strata contributing to $Sk(X)$. If the reader prefers to work with SNC models, one can apply the discussion to any SNC resolution of $\mathcal{X}$ isomorphic to $\mathcal{X}$ on its smooth locus. The choice of the resolution does not matter because the blow up along the singular locus of $\mathcal{X}$ has no effect on $U\subset X^{an}$.
\end{rmk}

Our current strategy, which follows \cite{Hultgren}, is to produce $\norm{\cdot}_{CY}$ via solving the variational problem. We are given the polytopes $\Delta, \Delta^\vee$, $\Delta_\lambda^\vee=\Delta^\vee$ and $\Delta_\mu$, and we assume that the equivalent conditions in Thm. \ref{realMAthm} applies (eg. when (\ref{extraassumption}) holds). Then section \ref{VariationalProblem} produces the minimizer $\phi$ on $\partial \Delta^\vee$, and by Thm. \ref{realMAthm}, this implies that $\phi$ restricted to the open faces $\text{Int}(\Delta_m^\vee)$ satisfies the \emph{real MA equation} in the weak formulation.

This minimizer $\phi$ extends canonically to a convex function on $N_\R$ by
\[
\phi(x)= \max_{p\in \partial \Delta_\mu} \langle x,p\rangle- \phi^*(p).
\]
By \cite[Thm. C]{PilleSchneider}, this convex function $\phi$ corresponds canonically to a semipositive toric metric on the Berkovich analytification of the toric Fano manifold $ X_\Delta$, hence induces by restriction a semipositive metric $\Phi$ on $L\to X^{an}$. This concretely works as follows. The integral points $m\in M_\R$ correspond to monomial sections $s^m$ which induce logarithms $\log |s^m|$ on $X^{an}$, and by linear interpolation we can make sense of $\log |s^p|=\sum p_i\log |s^i|$, where $s^i$ corresponds to a $\Z$-basis of $M_\R$. Then the semipositive metric is defined by 
\begin{equation}
\Phi= \max_{p\in \partial \Delta_\mu  } \log |s^p|- \phi^*(p).
\end{equation}

\begin{lem}
The semipositive metric $\Phi$ on $L\to X^{an}$ satisfies the weak comparison property.
\end{lem}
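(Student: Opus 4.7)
The plan is to unpack the formula defining $\Phi$ and observe that, since $\Phi$ arises as a toric metric on $X_\Delta^{an}$ pulled back to $X^{an}$, its potential on $U$ factors literally through the tropicalization map on a local toric chart, which agrees with the retraction $U\to Sk(X)\simeq \partial \Delta^\vee$.

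First I would fix a toric fixed point $P\in \mathcal{X}_0$, which (since $X_\Delta$ is smooth Fano with reflexive $\Delta$) corresponds to a vertex $n_0\in \text{vertex}(\Delta^\vee)$ and provides a toric chart of $X_\Delta$ around $P$ with local coordinates $z_1,\ldots,z_{d+1}$ associated to an integral basis of $N$. I would trivialise $L$ near $P$ by a local monomial frame $s_0$ corresponding to the vertex $m_0$ of the moment polytope $\Delta_\mu$ sitting over $P$. Using the paper's convention $\log|s^p|=\sum p_i\log|s^i|$, unpacking the definition of $\Phi$ in this frame yields
\[
\varphi(x)= -\log\|s_0\|(x)= \max_{p\in \partial\Delta_\mu}\bigl(\langle \log|z|(x),\, p-m_0\rangle - \phi^*(p)\bigr),
\]
where $\log|z|(x)\in N_\R$ denotes the vector $(\log|z_1|(x),\ldots,\log|z_{d+1}|(x))$ read off in the chosen basis.

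Next I would identify $\log|z|(x)$ with the tropicalization coordinate of the retraction. For $x\in U$ the center $c_{\mathcal{X}}(x)=P$, so $x$ lies in the chart above and $\log|z|(x)$ is by definition the tropicalization $\text{trop}(x)\in N_\R$. The equation $X_{can}+tF=0$ of $\mathcal{X}$ reads locally as $z_1\cdots z_{d+1}=-tF\cdot(\text{unit})$, whose tropicalization pins $\text{trop}(x)$ to a top-dimensional face of $\partial\Delta^\vee$; by the setup preceding Theorem \ref{Vilsmeier} this is exactly the open face of $\Delta_{\mathcal{X}}$ onto which $x$ retracts under $c_{\mathcal{X}}$, via $\Delta_{\mathcal{X}}\xrightarrow{\text{emb}}Sk(X)\xrightarrow{\text{trop}}\partial\Delta^\vee$. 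Substituting into the formula for $\varphi$ gives
\[
\varphi(x)= \phi(\text{trop}(x))-\langle \text{trop}(x),\, m_0\rangle,
\]
with $\phi$ the convex extension to $N_\R$. In particular $\varphi(x)$ depends on $x$ only through $\text{trop}(x)$, and hence is constant on the fibres of the retraction $U\to \partial\Delta^\vee$; this is the weak comparison property.

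The main bookkeeping obstacle, and the only step that is not essentially linear algebra in the toric chart, is the identification of the chart-level tropicalization with the retraction onto $\Delta_{\mathcal{X}}$. Since $\mathcal{X}$ is dlt but possibly non-$\Q$-factorial, I would either invoke the explicit local form of $\mathcal{X}$ near a toric fixed point recorded in \cite{Hultgren}, or, following the remark after Theorem \ref{NAMASYZ}, pass to an SNC resolution of $\mathcal{X}$ agreeing with $\mathcal{X}$ on the locus relevant to $U$, on which the identification is classical.
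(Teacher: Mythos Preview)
Your approach is essentially the paper's: trivialise $L$ by a monomial section, unfold the defining formula for $\Phi$, and observe that the resulting potential depends on $x$ only through the tropicalization coordinate. The paper works directly over a face $\text{Int}(\Delta_m^\vee)$, trivialises by $s^m$ with $m\in\text{vertex}(\Delta)$, and reads off the local potential as $\phi-1$; your version with $m_0\in\Delta_\mu$ yields $\phi(\text{trop}(x))-\langle\text{trop}(x),m_0\rangle$, which is the same computation in a different gauge. One harmless slip: a toric fixed point $P$ of $X_\Delta$ corresponds to a vertex of $\Delta$ (equivalently a maximal cone of the fan in $N_\R$, spanned by $d+1$ vertices of $\Delta^\vee$), not to a single vertex $n_0\in\text{vertex}(\Delta^\vee)$; since $n_0$ plays no further role and you correctly use $m_0$, the argument goes through.
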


\begin{proof}
Over the open face $\text{Int}(\Delta_m^\vee)$, upon choosing a local trivialising section as the monomial $s=s^m$, then $\log |s^p|$ is identified with the local function on $X^{an}$
\[
\log |\frac{s^p}{ s }| = \langle x, p\rangle- \langle x,m\rangle= \langle x, p\rangle +\lambda(m)=  \langle x, p\rangle-1 .
\]
Here the point $x\in N_\R$ is the image under the tropicalization map $U\to \partial \Delta^\vee\subset N_\R$. Thus the local potential of $\Phi$ over the open face factorizes through the tropicalization map, and is identified with the function $\phi-1$.
\end{proof}

\begin{cor}
The NA MA measure of $\Phi$ is supported on $Sk(X)$, and agrees with the Lebesgue type measure $(L^d)\mu_0$.
\end{cor}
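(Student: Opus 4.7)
The plan is to combine the weak comparison property from the preceding lemma with Vilsmeier's theorem to reduce the NA Monge-Amp\`ere measure of $\Phi$ to the real Monge-Amp\`ere measure of $\phi$ on the open faces of $Sk(X)$, and then deduce the support statement by a total-mass argument.

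First I would apply Theorem \ref{Vilsmeier} to $\Phi$, using the weak comparison property established in the preceding lemma, to obtain on $U\subset X^{an}$ the identity
\[
1_{Sk(X)}\,\text{MA}_{NA}(\Phi) = d!\,\text{MA}_\R(\varphi),
\]
where $\varphi$ is the local potential on each open face $\text{Int}(\Delta_m^\vee)$ of $\partial\Delta^\vee\simeq Sk(X)$. The proof of the preceding lemma identifies $\varphi = \phi - 1$, so $\text{MA}_\R(\varphi)=\text{MA}_\R(\phi)$. Since (\ref{extraassumption}) is in force, Theorem \ref{realMAthm} applies and $\phi$ satisfies $\det(D^2\phi)=C_0$ weakly on each open face, whence $\text{MA}_{NA}(\Phi)$ restricted to $U\cap Sk(X)$ has density $d!C_0$ with respect to the integral Lebesgue measure $dx$.

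Next I would identify this density with $(L^d)\mu_0$. Using $\mu_0=d\mathcal{L}^\vee=dx/\int_{\partial\Delta^\vee}dx$ and $C_0=\int_{\partial\Delta_\mu}dp/\int_{\partial\Delta^\vee}dx$, the identification reduces to the classical toric identity
\[
(L^d) = L^d\cdot(-K_{X_\Delta}) = \sum_n L^d\cdot D_n = \sum_n d!\,\mathrm{Vol}(\Delta_n) = d!\int_{\partial\Delta_\mu}dp,
\]
coming from the decomposition $-K_{X_\Delta}=\sum_n D_n$ into toric divisors indexed by the vertices of $\Delta^\vee$, together with the fact that the moment polytope of $L|_{D_n}$ is the facet $\Delta_n\subset\partial\Delta_\mu$. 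These substitutions then give exactly $d!C_0\,dx=(L^d)\mu_0$ on each open face of $Sk(X)$.

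Finally, to promote this to the support statement, I would invoke total-mass bookkeeping: the total mass of $\text{MA}_{NA}(\Phi)$ equals $(L^d)$ for any semipositive metric on $L$, and the previous step already accounts for all of it on the top-dimensional open faces of $Sk(X)$. Since $\text{MA}_{NA}(\Phi)$ is nonnegative, no mass can survive on $X^{an}\setminus Sk(X)$ (in particular on $X^{an}\setminus U$) nor on the lower-dimensional faces of the skeleton, so the measure is entirely supported on $Sk(X)$ with density $(L^d)\mu_0$. The only step with any technical content is the classical toric intersection computation, which is routine once the moment-polytope dictionary is set up; the structural blueprint (weak comparison $\Rightarrow$ Vilsmeier reduction $\Rightarrow$ mass matching) mirrors the strategy of \cite{Hultgren} and \cite{PilleSchneider}.
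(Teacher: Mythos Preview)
Your proof is correct and follows the same blueprint as the paper: apply the weak comparison property and Vilsmeier's theorem to reduce to the real MA measure of $\phi$, invoke the real MA equation (\ref{realMA1}) on each open face, identify the constant via $(L^d)=d!\int_{\partial\Delta_\mu}dp$, and finish with the total-mass argument.

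The one substantive difference is in how you verify $(L^d)=d!\int_{\partial\Delta_\mu}dp$. The paper proves this by an asymptotic Riemann--Roch argument: write $h^0(X_t,kL)=h^0(X_\Delta,kL)-h^0(X_\Delta,kL\otimes\mathcal{O}(-\mathcal{X}_0))$ via the short exact sequence and Serre vanishing, interpret the difference as the number of lattice points on $\partial(k\Delta_\mu)$, and take $k\to\infty$. You instead use toric intersection theory directly: $(L^d)_{X_t}=L^d\cdot(-K_{X_\Delta})=\sum_n L^d\cdot D_n=\sum_n d!\,\mathrm{Vol}(\Delta_n)$. Both are standard; your route is slightly more geometric and bypasses the lattice-point count, while the paper's route is perhaps more self-contained for a reader less familiar with the toric dictionary.
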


\begin{proof}
Over any open face $\text{Int}(\Delta_m^\vee)$, the convex function $\phi$ satisfies the weak formulation of the real MA equation  (\ref{realMA1}),
\[
\det(D^2\phi)= C_0  = \frac{  \int_{\partial \Delta_{\mu}} dp   }{ \int_{\partial \Delta_{\lambda}^\vee} dx  }.
\]
Applying Theorem \ref{Vilsmeier}, the NA MA measure over these open faces is supported inside $Sk(X)\subset X^{an}$, and can be identified as $d!$ times the real MA measure.

We need to figure out the normalisation constants. Tracing through the conventions (\cf (\ref{normalizedmeasure})), the NA MA measure is equal to
\begin{equation}\label{NAMAcomputation}
d! \text{MA}_\R(\phi)= d! C_0 |dx|=( d! C_0 \int_{\partial \Delta_\lambda^\vee}dx) d\mathcal{L}^\vee =(d!  \int_{\partial \Delta_\mu}dp )  d\mathcal{L}^\vee =(d!  \int_{\partial \Delta_\mu}dp ) \mu_0. 
\end{equation}
\begin{claim}
The factor $d!  \int_{\partial \Delta_\mu}dp= (L^d)$. 	
\end{claim}

To see the claim, we start with the asymptotic Riemann-Roch formula
\[
h^0(X_t, k L)\sim  \frac{ (L^d) }{d!} k^d,\quad k\gg 1.
\]
On the other hand, by the short exact sequence on the toric Fano manifold $X_\Delta$ 
\[
0\to kL\otimes \mathcal{O}(-X_t)\to    kL\to kL|_{X_t}\to 0
\]
together with the Serre vanishing of $h^1(X_t, kL\otimes\mathcal{O}(-X_t) )$ for $k\gg 1$ (since $L$ is ample),
\[
h^0(X_t, k L)= h^0(X_\Delta, kL)- h^0(X_\Delta, kL\otimes \mathcal{O}(-X_t))=  h^0(X_\Delta, kL)- h^0(X_\Delta, kL\otimes \mathcal{O}(-\mathcal{X}_0)) .
\]
Now the global sections of $H^0(X_\Delta, kL)$ are spanned by the monomial sections, which correspond to the integral points on $k\Delta_\mu$, while $H^0(X_\Delta, kL\otimes \mathcal{O}(-\mathcal{X}_0))$ are spanned by those monomials which vanish on the toric boundary of $X_\Delta$. Hence their difference counts the integral points on $\partial (k\Delta_\mu)$. Now  these correspond to the rational points in  
\[
\frac{1}{k} \partial (k\Delta_\mu)\cap \frac{1}{k}\Z^{d+1}= \partial \Delta_\mu\cap \frac{1}{k}\Z^{d+1}.
\]
and as $k\to +\infty$, they equidistribute with respect to the Lebesgue measure on $\partial \Delta_\mu$. Hence
\[
h^0(X_t, kL)=
\#( \partial \Delta_\mu\cap \frac{1}{k}\Z^{d+1}   ) \sim k^d \int_{\partial \Delta_\mu}dp.
\]
Comparing the asymptotes proves the claim.

Applying the claim to (\ref{NAMAcomputation}), we see the identification of the NA MA measure with $(L^d)\mu_0$, over all the open faces of $Sk(X)\simeq \partial \Delta^\vee$. On the other hand, the total measure of the semipositive metric $\Phi$ on $L\to X^{an}$ is $(L^d)$. This forces the open faces to contain the full measure.
\end{proof}

Now by the uniqueness of the NA MA equation, the semipositive metric $\Phi$ must agree with the Boucksom-Favre-Jonsson solution $\norm{\cdot}_{CY}$. Since $\Phi$ is known to satisfy the weak comparison property, by applying Thm. \ref{NAMASYZ} we deduce that the SYZ conjecture holds for the family $X_t$, whence Theorem \ref{SYZthm} is proved.

\end{document}